\documentclass[10pt,a4paper]{amsart} 

\usepackage[utf8]{inputenc}
\usepackage[T1]{fontenc}
\usepackage{amsmath, amssymb}
\usepackage{amsthm}
\usepackage{mathtools}   

\usepackage{xspace} 
\usepackage{xfrac}  
\usepackage{ifthen} 
\usepackage{xifthen} 
\usepackage{array} 
\usepackage{booktabs} 

\usepackage{enumitem} 
\setenumerate[0]{label=(\roman*)}   

\usepackage{hyperref}
\hypersetup{
    unicode=true,          
    pdftitle={Orthogonal projections of discretized sets},    
    pdfauthor={He Weikun},     
    pdfsubject={Arithmetic combinatorics},   
    pdfkeywords={Discretized}{Projection}, 
    pdfborder={0 0 0}
}


\renewcommand{\phi}{\varphi} 

\newcommand\dash{\nobreakdash-\hspace{0pt}}

\newtheorem{thm}{Theorem}
\newtheorem{coro}[thm]{Corollary}
\newtheorem{prop}[thm]{Proposition}
\newtheorem{lemm}[thm]{Lemma}

\theoremstyle{definition}

\def\N{{\mathbb N}}
\def\Z{{\mathbb Z}}
\def\R{{\mathbb R}}

\def\dd{{\,\mathrm{d}}}

\def\AC{{\mathcal A}}
\def\LC{{\mathcal L}}
\def\CC{{\mathcal C}}  
\def\DC{{\mathcal D}}  
\def\EC{{\mathcal E}}  
\def\QC{{\mathcal Q}}  
\def\PC{{\mathcal P}}  



\DeclareMathOperator{\End}{End}

\DeclareMathOperator{\SL}{SL}
\DeclareMathOperator{\GL}{GL}
\DeclareMathOperator{\grO}{O}

\DeclareMathOperator{\Ball}{\mathbf{B}}


\DeclareMathOperator{\Gr}{Gr}        

\DeclareMathOperator{\Span}{Span}
\DeclareMathOperator{\Supp}{Supp}

\DeclareMathOperator{\indic}{\mathbf{1}}
\DeclareMathOperator{\En}{\omega}        

\DeclareMathOperator{\dang}{d_\measuredangle}        
\DeclareMathOperator{\Vang}{\mathcal{V}_{\!\measuredangle}} 
\DeclareMathOperator{\dimH}{\dim_H}

\newcommand{\transp}[1]{\prescript{t}{}{#1}} 
\newcommand{\abs}[1]{\lvert#1\rvert}    
\newcommand{\norm}[1]{\lVert#1\rVert}   

\newcommand{\ceil}[1]{\left\lceil#1\right\rceil}   

\newcommand{\inv}[1]{#1^{-1}}                  
\newcommand{\Inv}[1]{\frac{1}{#1}}               

\newcommand{\ens}[1]{\left\lbrace#1\right\rbrace}     
\newcommand{\ensA}[1]{\{1,\dotsc,#1\}}     




\newcommand{\Prob}[2][]{\ifthenelse{\equal{#1}{}}   
            {\mathbb{P}}               
            {\mathbb{P}_{#1}}\bigl[#2\bigr]}

\newcommand{\Espr}[2][]{\ifthenelse{\equal{#1}{}}   
            {\mathbb{E}}               
            {\mathbb{E}_{#1}}\bigl[#2\bigr]}

\newcommand{\muex}[1][]{\ifthenelse{\isempty{#1}}   
            {\mu_{\mathrm{ex}}}               
            {\mu_{\mathrm{ex}}^{#1}}}


\newcommand{\Ncov}[1][\delta]{\mathcal{N}_{#1}}

\DeclareMathOperator{\ECreg}{\EC_{reg}}


\renewcommand{\bullet}{\boldsymbol{\,\cdot\,}}

\title{Orthogonal projections of discretized sets}

\author{Weikun He}
\date{\today}
\address{Laboratoire de Mathématiques d'Orsay, Univ. Paris-Sud, Université Paris-Saclay, 91405 Orsay, France.}
\address{Einstein Institute of Mathematics, The Hebrew University of Jerusalem, Jerusalem 91904, Israel.}
\email{weikun.he@mail.huji.ac.il}

\begin{document}

\begin{abstract}
We generalize Bourgain's discretized projection theorem to higher rank situations. Like Bourgain's theorem, our result yields an estimate for the Hausdorff dimension of the exceptional sets in projection theorems formulated in terms of Hausdorff dimensions. This estimate complements earlier results of Mattila and Falconer.
\end{abstract}

\maketitle

\section{Introduction}
Fractal properties of orthogonal projections of subsets the Euclidean space have been intensively studied in fractal geometry (See the survey \cite{FalconerFraserJin} for history and recent development). One of the fundamental problems asks for lower bounds on the size of the projections of a given set to different directions. Since, in general, we do not expect the projection to be large in every direction, we ask more precisely to bound from above the size of the set of exceptional directions where an exceptional direction means a subspace onto which the projection is small. In this problem, the notion of size varies according to the context. For example, in a fractal geometric context, it is often the Lebesgue measure or the Hausdorff dimension. In a discretized setting, we measure the size of a set by its covering number by $\delta$-balls where $\delta > 0$ is the observing scale. In this setting, Bourgain established a discretized projection theorem~\cite[Theorem 5]{Bourgain2010} concerning rank one projections. The primary goal of the present paper is to generalize Bourgain's result to higher rank projections.

\subsection{Statement of the main result}
Let $0< m < n$ be positive integers. Let $\delta > 0$. We endow $\R^n$ with its usual Euclidean structure. For $x \in \R^n$, $\Ball(x,\delta)$ stands for the closed ball of radius $\delta$ and center $x$. Let $A$ be a bounded subset of $\R^n$. We write $\Ncov(A)$ for the minimal number of balls of radius $\delta$ that is needed in order to cover $A$. This number represents the size of $A$ at scale $\delta$. 

We denote by $\Gr(\R^n,m)$ the Grassmannian of $m$\dash{}dimensional subspaces in $\R^n$. For $V \in \Gr(\R^n,m)$, $\pi_V \colon \R^n \to V$ stands for the orthogonal projection to $V$. If $W \in \Gr(\R^n,n-m)$, we define
\[\dang(V,W) = \abs{\det(v_1, \dotsc, v_m, w_1, \dotsc, w_{n-m})},\]
where $(v_1,\dotsc, v_m)$ is an orthonormal basis of $V$ and $(w_1,\dotsc, w_{n-m})$ an orthonormal basis of $W$ and the determinant is with respect to any orthonormal basis of $\R^n$. For example $\dang(V,W) = 0$ if and only if $V$ and $W$ have nontrivial intersection. For $\rho \geq 0$, we denote by $\Vang(W,\rho)$ the set of all $V\in \Gr(\R^n,m)$ such that $\dang(V,W) \leq \rho$. Recall that $\Vang(W,0)$ is a submanifold of codimension $1$ in $\Gr(\R^n,m)$ and belongs to the class of algebraic subvarieties known as Schubert cycles (see for example \cite[Chapter 1, \S 5]{GriffithsHarris}).

Our main result is the following.
\begin{thm}\label{thm:proj}
Let $m < n$ be positive integers. Given $0 < \alpha < n$ and $\kappa > 0$, there exists $\epsilon > 0$ such that the following holds for sufficiently small $\delta > 0$. Let $A$ be a subset of $\R^n$ contained in the unit ball $\Ball(0,1)$. Let $\mu$ be a probability measure on $\Gr(\R^n,m)$. Assume that
\begin{equation} \label{eq:sizeA}
\Ncov(A) \geq \delta^{-\alpha + \epsilon};
\end{equation}
\begin{equation} \label{eq:nonconA} 
\forall \rho \geq \delta,\; \forall x\in \R^n,\quad \Ncov(A \cap \Ball(x,\rho)) \leq \delta^{-\epsilon}\rho^\kappa \Ncov(A);
\end{equation}
\begin{equation} \label{eq:nonconmu} 
\forall \rho \geq \delta,\; \forall W \in \Gr(\R^n, n-m),\quad \mu(\Vang(W,\rho)) \leq \delta^{-\epsilon}\rho^\kappa.
\end{equation}
Then there is a set $\DC \subset \Gr(\R^n,m)$ such that $\mu(\DC) \geq 1 - \delta^\epsilon$ and 
\[\Ncov(\pi_V(A')) \geq \delta^{-\frac{m}{n}\alpha - \epsilon}\]
whenever $V \in \DC$ and $A' \subset A$ is a subset such that $\Ncov(A') \geq \delta^\epsilon \Ncov(A)$.
\end{thm}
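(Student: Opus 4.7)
The plan is to proceed by induction on the ambient dimension $n$, with the base case being Bourgain's discretized projection theorem \cite[Theorem 5]{Bourgain2010} for rank-one projections in $\R^n$. For the inductive step (rank $m \geq 2$), fix $V \in \Gr(\R^n,m)$ and an orthogonal decomposition $V = V_0 \oplus \ell$ with $V_0 \in \Gr(\R^n,m-1)$ and $\ell = V \cap V_0^\perp$, so that $V_0 \subset \ell^\perp \cong \R^{n-1}$ and $\pi_V = \pi_{V_0} \oplus \pi_\ell$. A Fubini\dash{}type inequality gives, for any $A'_V \subset A$,
\[
\Ncov(\pi_V(A'_V)) \;\gtrsim\; \Ncov(\pi_\ell(A'_V)) \cdot \inf_{y \in \pi_\ell(A'_V)} \Ncov\bigl(\pi_{V_0}(A'_V \cap \pi_\ell^{-1}(\Ball(y,\delta)))\bigr).
\]
Bourgain's rank-one theorem in $\R^n$ gives $\Ncov(\pi_\ell(A'_V)) \geq \delta^{-\alpha/n - O(\epsilon)}$ for suitable $\ell$; by pigeonhole some $\delta$\dash{}fiber $\pi_\ell^{-1}(\Ball(y,\delta)) \cap A'_V$ has $\Ncov \geq \delta^{-\alpha(n-1)/n - O(\epsilon)}$ and lies essentially in the hyperplane $\ell^\perp \cong \R^{n-1}$. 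Applying the inductive hypothesis (rank $m-1$ projection in $\R^{n-1}$) to $\pi_{V_0}$ on this slice yields $\delta^{-(m-1)\alpha/n - O(\epsilon)}$; multiplying gives the desired $\delta^{-m\alpha/n - O(\epsilon)}$.

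To run this argument I would argue by contradiction: assume a bad set $\BC \subset \Gr(\R^n,m)$ with $\mu(\BC) \geq \delta^\epsilon$ of $V$'s carrying a subset $A'_V \subset A$ with $\Ncov(A'_V) \geq \delta^\epsilon \Ncov(A)$ but $\Ncov(\pi_V(A'_V)) < \delta^{-m\alpha/n - \epsilon}$. To feed both the induction and the rank-one theorem, I would disintegrate $\mu$ along the flag fibration $(V_0,\ell) \mapsto V_0 \oplus \ell$, producing auxiliary measures $\mu^{(m-1)}$ on $\Gr(\ell^\perp,m-1)$ (conditionally on $\ell$) and $\mu^{(1)}$ on $\Gr(\R^n,1)$, and verify that \eqref{eq:nonconA} on $A$ descends to non-concentration on each slice $A'_V \cap \pi_\ell^{-1}(\Ball(y,\delta))$ after a pigeonhole refinement, at a cost of $\delta^{O(\epsilon)}$ and a slight weakening of the exponent $\kappa$.

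The main obstacle, as I see it, is transferring the Grassmannian non-concentration \eqref{eq:nonconmu} from $\mu$ to the induced measures $\mu^{(m-1)}$ and $\mu^{(1)}$. Hypothesis \eqref{eq:nonconmu} concerns codimension-one Schubert cycles $\Vang(W,\rho)$ with $W \in \Gr(\R^n,n-m)$, while the induction requires an analogous bound for Schubert cycles in $\Gr(\R^{n-1},m-1)$ and the rank-one theorem requires a hyperplane non-concentration in $\R^n$. This transfer is not automatic, because a single Schubert stratum in the smaller Grassmannian pulls back to a positive-codimension family of strata in $\Gr(\R^n,m)$. I would address this by writing the required conditional mass as an integral of $\mu$ against a parameterized family $\{W : W \subset W'\}$ (or $\{W : W \subset H\}$) and applying \eqref{eq:nonconmu} pointwise inside the integral, tracking $\epsilon$ losses carefully so that they close through the recursion on $n$.

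Should the transfer prove too costly, a fallback is a direct pair-counting argument bypassing the induction: the bad hypothesis forces many pairs $(x,y) \in A'_V \times A'_V$ with $x-y$ nearly in $V^\perp$ for many $V \in \BC$. Summing against $\mu$ and reversing the order of summation reduces the problem to bounding, for each difference $v = x-y$, the quantity $\mu(\{V : \abs{\pi_V(v)} \leq \delta\})$, which is controlled by \eqref{eq:nonconmu} via Fubini over subspaces $V \subset v^\perp$. Combined with \eqref{eq:sizeA} and \eqref{eq:nonconA} this should force a contradiction, though it will likely require an auxiliary Pl\"unnecke--Ruzsa or sum-product input in the spirit of Bourgain's rank-one proof.
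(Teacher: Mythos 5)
Your proposal has the right exponent arithmetic (peeling off a line direction and a codimension\dash{}one slice does reproduce $\frac{1}{n}\alpha+\frac{m-1}{n-1}\cdot\frac{n-1}{n}\alpha=\frac{m}{n}\alpha$), but two steps that you treat as routine are in fact where the difficulty of the theorem lives, and as written they fail. First, the slice non-concentration does \emph{not} ``descend after a pigeonhole refinement''. If $B=A'_V\cap\pi_\ell^{-1}(\Ball(y,\delta))$ has the expected size $\Ncov(B)\approx\delta^{-\frac{n-1}{n}\alpha}$, then the bound inherited from \eqref{eq:nonconA} is $\Ncov(B\cap\Ball(x,\rho))\leq\delta^{-\epsilon}\rho^{\kappa}\Ncov(A)\approx\delta^{-\epsilon-\frac{\alpha}{n}}\rho^{\kappa}\Ncov(B)$, which is vacuous except at scales $\rho\lesssim\delta^{\alpha/(n\kappa)}$; a fiber of a single projection can be far more concentrated than $A$ itself, and no pigeonhole over fibers repairs this. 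This is precisely why the paper needs Lemma~\ref{lm:NCproj} and Lemma~\ref{lm:NCslice}, which manufacture a usable slice only in the case $m\mid n$ and only by exploiting \emph{many} exceptional directions of the measure $\mu$ (intersecting many cylinders to contradict \eqref{eq:nonconA}), not a single direction $\ell$. Second, there is a quantifier circularity in your inductive step: the induction hypothesis (like Bourgain's theorem) only says that the projection of a \emph{given} set is large for $V_0$ outside an exceptional set \emph{depending on that set}; in your scheme the slice depends on $V$ (hence on $V_0$ and $\ell$), so you cannot conclude that your particular $V_0$ is good for your particular slice. Relatedly, the non-concentration you need for the conditional law of $V_0$ given $\ell$ is a statement about a disintegration of $\mu$ along fibers of the flag map, and \eqref{eq:nonconmu}, being an integrated codimension-one condition, gives no control of conditional measures on such fibers; your proposed fix (integrating \eqref{eq:nonconmu} pointwise over a family of $W$'s) addresses pushforwards, not conditionals. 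The paper escapes both problems by proving the weaker ``there exists a subset $A'$'' statement (Theorem~\ref{thm:main}), applying the induction hypothesis to a \emph{single, fixed} slice $B$ against a fixed pushforward measure (Lemmas~\ref{lm:piWVok} and \ref{lm:piVpB}), and then recovering the full Theorem~\ref{thm:proj} by the exhaustion argument of Proposition~\ref{pr:unionAps}; nothing in your outline plays the role of this decoupling.

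Beyond these two gaps, the global architecture differs from the paper's in a way that matters: the paper does not induct by peeling off one dimension at a time from Bourgain's rank-one case, but starts from the half-dimensional case $n=2m$ (proved via Balog--Szemer\'edi--Gowers and the higher-rank sum-product theorem of \cite{He2016}) and then runs three separate reductions (slicing when $m\mid n$; sums of independent random subspaces when $m<\frac n2$; intersections of random subspaces together with the Bollob\'as--Thomason refinement, Lemma~\ref{lm:EnergyProj}, when $m>\frac n2$), ordered by a nonstandard well-ordering on pairs $(n,m)$. The obstacles above are essentially the reason this more roundabout route is taken. Your fallback second-moment argument also does not close the gap: counting pairs with $x-y$ nearly in $V^\perp$ and averaging over $\mu$ breaks even exactly at the exponent $\frac mn\alpha$, so the $\epsilon$-gain cannot come from \eqref{eq:nonconA}, \eqref{eq:nonconmu} and Pl\"unnecke--Ruzsa alone; some sum-product/BSG input at the level of higher-rank projections is unavoidable, as you yourself concede, so the fallback is an observation rather than a proof.
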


The $m=1$ case is due to Bourgain~\cite{Bourgain2010}. For $m \geq 2$, our result is new. Hypothesis~\eqref{eq:nonconA} is a Frostmann type non-concentration condition on $A$. Without it we can have example like $A = \Ball(0,\delta^{1-\frac{\alpha}{n}})$, a ball of radius $\delta^{1-\frac{\alpha}{n}}$, whose size is $\Ncov(A) \approx \delta^{-\alpha}$ but whose projection to any $V \in \Gr(\R^n,m)$ is of size
\[\Ncov(\pi_V(A)) \approx \delta^{- \frac{m}{n} \alpha}.\]
Hypothesis~\eqref{eq:nonconmu} is a non-concentration condition on the distribution of the subspace $V$. The set $\Vang(W,\rho)$ can be thought of as a $\rho$-neighborhood of the Schubert cycle $\Vang(W,0)$. For example if $m=1$, $V$ lives in the projective space and \eqref{eq:nonconmu} is asking $\mu$ to be not concentrated around any projective subspace.
Note that the factor $\delta^{-\epsilon}$ in both \eqref{eq:nonconA} and \eqref{eq:nonconmu} means the non-concentration property needs to be satisfied up to scale $\delta^\epsilon$. So the parameter $\kappa$ is about how good the assumptions are and $\epsilon$ is about how much the assumptions can be relaxed and how good the conclusion is.

\subsection{Fractal geometric consequences}

Just like Bourgain's discretized projection theorem can be used to derive a projection theorem in terms of Hausdorff dimension \cite[Theorem 4]{Bourgain2010}, Theorem~\ref{thm:proj} has the following consequence.
\begin{thm}\label{cr:proj}
Let $m < n$ be positive integers. Given $0 < \alpha < n$ and $\kappa > 0$, there is $\epsilon > 0$ such that the following is true.
Let $A \subset \R^n$ is an analytic set of dimension $\dimH(A) =  \alpha$. Then the set of exceptional directions 
\[\bigl\{V \in \Gr(\R^n,m) \mid \dimH(\pi_V(A)) \leq \frac{m}{n}\alpha + \epsilon\bigr\}\]
does not support any nonzero measure $\mu$ on $\Gr(\R^n,m)$ with the following non-concentration property, 
\[\forall \rho > 0,\; \forall W \in \Gr(\R^n,n - m),\quad \mu(\Vang(W,\rho)) \leq \rho^\kappa.\]
\end{thm}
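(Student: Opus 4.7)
The plan is to argue by contradiction, applying Theorem~\ref{thm:proj} at a discretization scale $\delta$ chosen by a two-step dyadic pigeonhole. Let $\epsilon_0>0$ be the constant furnished by Theorem~\ref{thm:proj} applied with parameters $\alpha$ and $\kappa_0:=\min(\kappa,\alpha)/2$, and set $\epsilon:=\epsilon_0/10$. Suppose for contradiction that there is a nonzero probability measure $\mu$ on $\Gr(\R^n,m)$ supported in the exceptional set $E=\{V:\dimH(\pi_V(A))\leq m\alpha/n+\epsilon\}$ and satisfying $\mu(\Vang(W,\rho))\leq\rho^\kappa$ for all $\rho,W$. After rescaling, $A\subset\Ball(0,1)$. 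By Frostman's lemma for analytic sets, combined with the usual reduction to a compact subset of positive finite $(\alpha-\epsilon^2)$-dimensional Hausdorff measure, one obtains a compactly supported probability measure $\nu$ on $A$ with $\nu(\Ball(x,r))\leq r^{\alpha-\epsilon^2}$ for all $x,r$, and such that $\nu$-almost every $x$ has lower and upper local dimensions both in $[\alpha-\epsilon^2,\alpha]$.

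The crux is to locate a scale $\delta$ at which many $V\in E$ (in the sense of $\mu$-measure) yield a projection of $A$ that is quantitatively small at scale $\delta$. For $V\in E$, the pushforward $\pi_{V*}\nu$ is supported on $\pi_V(A)$ of Hausdorff dimension at most $s:=m\alpha/n+\epsilon$, whence its lower pointwise dimension $\underline{d}(\pi_{V*}\nu,y):=\liminf_{r\to 0}\log\pi_{V*}\nu(\Ball(y,r))/\log r$ is at most $s$ on a set of positive $\pi_{V*}\nu$-mass. Restricting to a positive-$\mu$ subset of $E$ on which this mass is uniformly bounded below by a constant $c_0>0$, I perform a dyadic pigeonhole over scales $\leq 2^{-K_0}$ (with $K_0$ large enough that Theorem~\ref{thm:proj} applies at any such scale): for each such $V$ there is $k_V\geq K_0$ and $Y_V\subset V$ with $\pi_{V*}\nu(Y_V)\gtrsim c_0/(k_V-K_0+1)^2$ such that $\pi_{V*}\nu(\Ball(y,2^{-k_V}))\geq 2^{-k_V(s+\epsilon)}$ for every $y\in Y_V$. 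A second pigeonhole over $V$ against $\mu$ then extracts a uniform scale $k^*\geq K_0$ and a set $E^*\subset E$ of $\mu$-measure at least $c_1/(k^*-K_0+1)^2$ on which $k_V=k^*$. Set $\delta:=2^{-k^*}$.

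An Egorov-plus-Vitali regularization at scale $\delta$ produces a subset $A_\delta\subset A$ with $\nu(A_\delta)\geq 1-|\log\delta|^{-100}$ on which $\nu$ is nearly $(\alpha-O(\epsilon^2))$-regular at scale $\delta$, in the sense that $\delta^{\alpha+\epsilon^2}\leq\nu(\Ball(x,\delta))\leq\delta^{\alpha-2\epsilon^2}$ for every $x\in A_\delta$; hence $A_\delta$ verifies \eqref{eq:sizeA} and \eqref{eq:nonconA} with exponent $\alpha-O(\epsilon^2)$, which majorizes $\kappa_0$. For $V\in E^*$, a packing argument using the lower density bound on $Y_V$ yields $\Ncov(Y_V)\lesssim\delta^{-s-\epsilon}$. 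Taking $A'_V:=A_\delta\cap\pi_V^{-1}(Y_V)$, the regularity of $\nu$ on $A_\delta$ gives $\Ncov(A'_V)\geq\delta^\epsilon\Ncov(A_\delta)$ (since $\nu(A'_V)\gtrsim 1/\log^2(1/\delta)$ while each $\delta$-ball carries $\nu$-mass at most $\delta^{\alpha-2\epsilon^2}$), while $\Ncov(\pi_V(A'_V))\leq\Ncov(Y_V)\leq\delta^{-m\alpha/n-2\epsilon}$. Applying Theorem~\ref{thm:proj} at scale $\delta$ now yields $\DC$ with $\mu(\DC)\geq 1-\delta^{\epsilon_0}$ on which any admissible $A'_V$ satisfies $\Ncov(\pi_V(A'_V))\geq\delta^{-m\alpha/n-\epsilon_0}$. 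Since $\mu(E^*)\gtrsim 1/\log^2(1/\delta)\gg\delta^{\epsilon_0}$ for $\delta$ small, $E^*\cap\DC\neq\emptyset$, and any $V$ in the intersection satisfies both bounds on $\Ncov(\pi_V(A'_V))$, which is impossible once $\epsilon_0>2\epsilon$.

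The hardest part is the two-step pigeonhole of the second paragraph: one must bookkeep the loss factors carefully so that the uniform scale $\delta$ is both small enough for Theorem~\ref{thm:proj} to apply and such that $\mu(E^*)>\delta^{\epsilon_0}$. The Egorov-Vitali regularization producing $A_\delta$ satisfying \eqref{eq:nonconA}, while standard in the discretized-projection literature, crucially relies on the dimension-density fact that a Frostman measure obtained from restriction of Hausdorff measure to a set of positive finite $\alpha$-dimensional Hausdorff measure has local dimension equal to $\alpha$ almost everywhere, providing the lower Frostman-type bound at scale $\delta$.
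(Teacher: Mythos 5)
Your overall scheme (Frostman measure, covering $\pi_V(A)$ by $2^{k\eta}$ balls of radius $2^{-k}$, a double pigeonhole in $k$ and $V$ to fix a scale $\delta=2^{-k^*}$ with $\mu$-measure loss only $\log^{-2}(1/\delta)\gg\delta^{\epsilon_0}$, then an application of Theorem~\ref{thm:proj}) is the same as the paper's. But there is a genuine gap at the step you yourself flag as crucial: the ``Egorov-plus-Vitali regularization'' producing $A_\delta$ with the \emph{two-sided} bound $\delta^{\alpha+\epsilon^2}\leq\nu(\Ball(x,\delta))\leq\delta^{\alpha-2\epsilon^2}$ on a set of almost full $\nu$-measure. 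The upper bound is just Frostman, but the lower bound does not follow from anything you have. The density theorems for Hausdorff measures give, for the restriction of $\mathcal{H}^s$ to a set of positive finite measure, that the \emph{upper} density $\limsup_{r\to0}\nu(\Ball(x,r))/r^s$ is positive a.e.; together with the Frostman bound this pins down the \emph{lower} local dimension at $s$, but it says nothing about the upper local dimension: the lower density $\liminf_{r\to0}\nu(\Ball(x,r))/r^s$ can vanish, and indeed there are compact sets of dimension $\alpha$ supporting no measure with $\nu(\Ball(x,r))\geq r^{\alpha+\epsilon^2}$ at all (or even most) small scales --- think of Cantor-type constructions whose branching is extremely lopsided along a sparse sequence of scales. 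So the ``dimension-density fact'' you invoke is false, and Egorov cannot be applied because the pointwise statement it would uniformize does not hold. The problem is unavoidable in your setup because the scale $\delta$ is dictated by the earlier pigeonhole, so you cannot dodge the bad scales; and without a lower bound on $\nu(\Ball(x,\delta))$ you can convert neither ``$\nu(A'_V)\gtrsim\log^{-2}(1/\delta)$'' into ``$\Ncov(A'_V)\geq\delta^{\epsilon}\Ncov(A_\delta)$'' nor the Frostman upper bound into the relative non-concentration \eqref{eq:nonconA}.

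The paper circumvents exactly this point without ever needing lower regularity of $\nu$: at the fixed scale $\delta$ it partitions $A$ into $L+1\approx n/\epsilon$ level sets $A_l$, where $A_l$ is the union of the dyadic $\delta$-cubes $Q$ with $\delta^{(l+1)\epsilon}\nu(A)<\nu(Q)\leq\delta^{l\epsilon}\nu(A)$. Within a fixed level, the covering number of any union of cubes is comparable to its $\nu$-mass up to a factor $\delta^{\pm\epsilon}$, so the Frostman \emph{upper} bound alone yields \eqref{eq:sizeA} and \eqref{eq:nonconA} for each massive $A_l$, and one further pigeonhole over the levels $l$ (costing only a factor $L+1$) transfers the mass of $A_{V,k}$ to a single level, to which Theorem~\ref{thm:proj} is then applied. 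If you replace your regularization step by this level-set decomposition (keeping your pigeonhole in $k$ and $V$, which is essentially the paper's), the rest of your argument --- the packing bound $\Ncov(Y_V)\lesssim\delta^{-s-\epsilon}$, the definition $A'_V=A_l\cap\pi_V^{-1}(Y_V)$, and the final contradiction with the conclusion of Theorem~\ref{thm:proj} --- goes through.
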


Endow the Grassmanian $\Gr(\R^n,m)$ with a rotation invariant Riemannian metric so that we can talk about Hausdorff dimension of subsets of $\Gr(\R^n,m)$. Theorem~\ref{cr:proj} applied to a Frostman measure supported on the set of exceptional directions, we get
\begin{coro}\label{cr:projFrost}
Let $m < n$ be positive integers. Given $0 < \alpha < n$ and $\kappa > 0$, there is $\epsilon > 0$ such that the following holds.
Let $A \subset \R^n$ be an analytic set of dimension $\dimH(A) = \alpha$. Then
\[
\dimH \bigl\{V \in \Gr(\R^n,m) \mid  \dimH( \pi_V(A)) \leq \frac{m}{n}\alpha + \epsilon \bigr\} \leq m(n-m) - 1 + \kappa.
\]
\end{coro}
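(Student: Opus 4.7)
The plan is to derive Corollary~\ref{cr:projFrost} from Theorem~\ref{cr:proj} by applying the latter to a Frostman measure supported on the exceptional set. Let $\epsilon > 0$ be the constant produced by Theorem~\ref{cr:proj} for the given $\alpha$ and $\kappa$, and set
\[E = \bigl\{V \in \Gr(\R^n, m) \mid \dimH(\pi_V(A)) \leq \tfrac{m}{n}\alpha + \epsilon \bigr\}.\]
We argue by contradiction: suppose $\dimH(E) > m(n-m) - 1 + \kappa$. Since $A$ is analytic, so is $E$, and Frostman's lemma then produces an exponent $s$ with $m(n-m) - 1 + \kappa < s < \dimH(E)$ together with a nonzero finite Borel measure $\mu$ supported on $E$ satisfying $\mu(\Ball(V, r)) \leq r^s$ for every ball in $\Gr(\R^n, m)$.

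To invoke Theorem~\ref{cr:proj}, we must verify the non-concentration condition $\mu(\Vang(W, \rho)) \leq \rho^\kappa$ after a suitable rescaling of $\mu$. The key geometric input will be a uniform covering-number estimate: the Schubert cycle $\Vang(W, 0)$ is a compact real-analytic subvariety of codimension $1$ in the $m(n-m)$-dimensional manifold $\Gr(\R^n, m)$, so it admits a $\rho$-cover by $O(\rho^{-(m(n-m)-1)})$ balls, uniformly in $W$. The smooth function $V \mapsto \dang(V, W)$ has normal derivative uniformly bounded below along its zero set (by joint compactness in $V$ and $W$), so $\Vang(W, \rho)$ lies in a $C\rho$-neighborhood of $\Vang(W, 0)$ and is therefore covered by $O(\rho^{-(m(n-m)-1)})$ balls of radius $C\rho$. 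Summing the Frostman bound over these balls yields
\[\mu(\Vang(W, \rho)) \leq C' \rho^{s - m(n-m) + 1} \leq C' \rho^\kappa\]
for all $\rho \in (0, 1]$, using that $s - m(n-m) + 1 > \kappa$ and $\rho \leq 1$.

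After rescaling $\mu$ by $C'$ (the bound being trivial for $\rho > 1$), we obtain a nonzero measure on $E$ that fulfills $\mu(\Vang(W, \rho)) \leq \rho^\kappa$ for every $\rho > 0$ and every $W \in \Gr(\R^n, n - m)$. This contradicts Theorem~\ref{cr:proj}, and so $\dimH(E) \leq m(n-m) - 1 + \kappa$, as required. The only nonroutine step will be the uniform covering-number estimate for the Schubert-cycle neighborhood: it reduces to checking that the Lipschitz constant of $\dang(\cdot, W)$ and the normal derivative along its zero set are uniform in $W$, which should follow from the smoothness of $\dang$ on the compact product of Grassmannians, but deserves a careful check.
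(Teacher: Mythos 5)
There is a genuine gap at the step you yourself flag as the ``only nonroutine'' one: the claim that $\dang(\cdot,W)$ has its normal derivative uniformly bounded below along the zero set, so that $\Vang(W,\rho)$ lies in a $C\rho$-neighborhood of $\Vang(W,0)$. This is false whenever $2 \leq m \leq n-2$, because the Schubert cycle $\Vang(W,0)=\{V : V\cap W\neq 0\}$ is \emph{singular} along the deeper stratum $\{V : \dim(V\cap W)\geq 2\}$, and there the function $\dang(\cdot,W)$ vanishes to order at least $2$, so its gradient vanishes on part of the zero set and no compactness argument can rescue a uniform linear comparison. Concretely, in $\Gr(\R^4,2)$ with $W=\Span(e_1,e_2)$, let $V_t=\Span(e_1\cos t+e_3\sin t,\; e_2\cos t+e_4\sin t)$. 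Then $\dang(V_t,W)=\sin^2 t\approx t^2$, while the distance from $V_t$ to $\Vang(W,0)$ is comparable to $t$ (one must move some unit vector of $V_t$ by about $t$ to place it inside $W$). Taking $\rho=t^2$, the point $V_t$ lies in $\Vang(W,\rho)$ but at distance about $\sqrt{\rho}$ from $\Vang(W,0)$, so the inclusion $\Vang(W,\rho)\subset\Vang(W,0)^{(C\rho)}$ fails, and with it your covering count $O(\rho^{-(m(n-m)-1)})$ by balls of radius $C\rho$. (For $\min(m,n-m)=1$ the zero set is a smooth hypersurface cut out transversally and your argument does go through, but the corollary is claimed for all $m<n$.)

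The paper handles exactly this degeneracy with the \L{}ojasiewicz inequality applied to $\dang(\bullet,W)^2$: it yields $\Vang(W,\rho)\subset \Vang(W,0)^{(\rho')}$ only with $\rho'=(C\rho)^{1/C}$ for some constant $C$ (uniform in $W$ by $\grO(n)$-invariance), and then the same Frostman-plus-covering computation as yours gives $\mu(\Vang(W,\rho))\ll \rho'^{\kappa}\ll\rho^{\kappa/C}$ rather than $\rho^{\kappa}$. The conclusion survives because one then invokes Theorem~\ref{cr:proj} with the weaker exponent $\kappa/C$ in place of $\kappa$; accordingly $\epsilon$ must be chosen as the constant Theorem~\ref{cr:proj} provides for the pair $(\alpha,\kappa/C)$, not for $(\alpha,\kappa)$ as in your first line. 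Everything else in your argument (Frostman measure on the exceptional set with exponent $s>m(n-m)-1+\kappa$, summing the ball bound over a cover, renormalizing to absorb multiplicative constants, contradiction with Theorem~\ref{cr:proj}) matches the paper's route; the missing ingredient is precisely the replacement of your linear neighborhood claim by a \L{}ojasiewicz-type (H\"older) comparison, together with the corresponding adjustment of the non-concentration exponent.
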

Note that $m(n-m)$ is the dimension of $\Gr(\R^n,m)$. As $\kappa \to 0$, we get 
\begin{equation}\label{eq:projFrost}
\dimH \bigl\{V \in \Gr(\R^n,m) \mid  \dimH( \pi_V(A)) \leq \frac{m}{n}\dimH(A) \bigr\} \leq m(n-m) - 1.
\end{equation}
This may be compared to estimates already known.
\begin{thm}[Mattila~\cite{Mattila1975}, Falconer~\cite{Falconer1982}, see also~{\cite[\S 5.3]{Mattila_Fourier}}]
\label{thm:Mattila}
Let $A \subset \R^n$ be an analytic set of Hausdorff dimension $\dimH(A) = \alpha$. For any $0 < s \leq \min\{\alpha,m\}$,
\[
\dimH \{V \in \Gr(\R^n,m) \mid  \dimH( \pi_V(A)) < s\} \leq m(n-m) - (\max\{\alpha,m\}-s);
\]
\end{thm}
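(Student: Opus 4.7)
The plan is to use the classical potential-theoretic method, resting on Frostman's lemma and the energy characterization of Hausdorff dimension: for a Borel set $E$, $\dimH E$ equals the supremum of $s$ for which $E$ supports a non-zero finite Borel measure $\mu$ with finite $s$-energy $I_s(\mu) := \iint |x-y|^{-s}\,d\mu(x)\,d\mu(y)$.

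I argue by contradiction: suppose the exceptional set $E_s := \{V \in \Gr(\R^n,m) : \dimH(\pi_V(A)) < s\}$ has Hausdorff dimension strictly above $m(n-m) - (\max\{\alpha,m\}-s)$. Fix Frostman measures $\mu$ on $A$ and $\nu$ on $E_s$ with exponents approaching $\alpha$ and $\dimH(E_s)$ respectively. For each $V \in E_s$, the pushforward $\pi_V\mu$ is supported on a set of Hausdorff dimension less than $s$, forcing $I_s(\pi_V\mu)=+\infty$. Integrating against $\nu$ and exchanging the order of integration gives
\[
+\infty \;=\; \int I_s(\pi_V\mu)\,d\nu(V) \;=\; \iint K_\nu(x-y)\,d\mu(x)\,d\mu(y), \qquad K_\nu(e) := \int |\pi_V(e)|^{-s}\,d\nu(V).
\]
A contradiction follows as soon as $K_\nu(e) \lesssim |e|^{-\beta}$ for some $\beta<\alpha$, since then the right-hand side is dominated by $I_\beta(\mu)<\infty$.

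To bound $K_\nu$, I would analyze the sublevel set $\{V:|\pi_V(e)|\leq\rho|e|\}$: it is an $O(\rho)$-neighborhood of the sub-Grassmannian $\{V\in\Gr(\R^n,m) : V\subset e^\perp\} \cong \Gr(e^\perp,m)$, a smooth submanifold of codimension $m$ in $\Gr(\R^n,m)$. Covering the submanifold by $\rho$-balls and applying the Frostman estimate $\nu(B)\lesssim (\operatorname{diam} B)^\tau$ gives $\nu\{V:|\pi_V(e)|\leq\rho|e|\}\lesssim\rho^{\tau-(m(n-m)-m)}$, and a layer-cake integration yields $K_\nu(e)\lesssim|e|^{-s}$ as soon as $\tau > m(n-m)-m+s$. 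Since $s$ can be taken arbitrarily close to $\alpha$ while preserving $I_s(\mu)<\infty$, this already settles the regime $\alpha\leq m$, where the target is exactly $m(n-m)-(m-s)$.

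The case $\alpha>m$ is where I expect the genuine difficulty, since the target bound $m(n-m)-(\alpha-s)$ is strictly sharper and the geometric argument above is too lossy. To recover the extra factor of $\alpha-m$ in the exponent I would pass to the Fourier side via the Plancherel-type identity $I_t(\pi_V\mu) = c_{m,t}\int_V |\hat\mu(\eta)|^2 |\eta|^{t-m}\,d\eta$, rewrite $\int I_s(\pi_V\mu)\,d\nu(V)$ as a pairing of $|\hat\mu|^2$ with a kernel on $\R^n$ obtained by averaging the slice measures on $V$ against $\nu$, and then combine the Frostman property of $\nu$ with the $L^2$-type decay of $\hat\mu$ (quantified by the finiteness of an $\alpha$-energy for $\mu$) to extract the sharper exponent. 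Performing this Fourier-side kernel estimate sharply on the Grassmannian is the delicate step, and it is the genuine source of the $\max\{\alpha,m\}$ dichotomy in the statement.
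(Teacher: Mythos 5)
First, a remark on provenance: the paper does not prove Theorem~\ref{thm:Mattila} at all --- it is quoted from the literature (Mattila 1975, Falconer 1982, and \cite[\S 5.3]{Mattila_Fourier}) as a point of comparison for \eqref{eq:projFrost}. So your proposal can only be measured against the classical proofs. For the regime $\alpha \leq m$ (bound $m(n-m)-(m-s)$) your argument is essentially the standard Kaufman--Mattila potential-theoretic proof and is correct in outline: the sublevel set $\{V : \norm{\pi_V(e)} \leq \rho\abs{e}\}$ is indeed contained in an $O(\rho)$-neighborhood of the codimension-$m$ sub-Grassmannian $\Gr(e^\perp,m)$, the covering argument gives $\nu\{V : \norm{\pi_V(e)}\leq\rho\abs{e}\} \lesssim \rho^{\tau-m(n-m)+m}$, and the layer-cake bound $K_\nu(e)\lesssim\abs{e}^{-s}$ for $\tau > m(n-m)-(m-s)$ yields the contradiction with $I_s(\mu)<\infty$ whenever $s<\alpha$ (take the Frostman exponent of $\mu$ in $(s,\alpha)$); the endpoint $s=\alpha$ then follows by writing the exceptional set as a countable union over $s'<\alpha$ and using countable stability of Hausdorff dimension. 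You should also record the routine points that the exceptional set is Suslin/measurable and that Frostman's lemma applies to analytic sets, but these are standard.

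The genuine gap is the case $\alpha>m$, i.e.\ the bound $m(n-m)-(\alpha-s)$, which is half the content of the theorem (it is exactly Falconer's contribution, sharpened by Peres--Schlag), and which you explicitly leave as ``the delicate step''. Note why your first argument cannot be patched: the kernel estimate on $K_\nu$ uses only the Frostman exponent of $\nu$ and never sees that $\dimH(A)>m$, so it can never produce an exponent beating $m(n-m)-(m-s)$. The Fourier route you sketch is the right one, but the crucial estimate is missing and is not a formality. Concretely, after writing $I_s(\pi_V\mu)=c\int_V\abs{\hat\mu(\xi)}^2\abs{\xi}^{s-m}\dd\xi$ and integrating in $V$ against $\nu$, one must control quantities of the form $\int_{S^{n-1}}\abs{\hat\mu(r\omega)}^2\dd\Lambda(\omega)$ with $\Lambda=\int \sigma_{S^{n-1}\cap V}\dd\nu(V)$; one cannot simply use pointwise decay of $\hat\Lambda$, since Frostman measures on the sphere or Grassmannian need not have any Fourier decay, and the naive bound obtained by taking absolute values loses precisely the range of exponents one needs. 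The classical arguments instead combine the non-concentration of $\nu$ around the codimension-$(n-m)$ incidence sets $\{V : \dang(\R\theta, V^\perp)\leq\rho\}$ with $L^2$ ball averages $\int_{\Ball(0,R)}\abs{\hat\mu}^2 \lesssim R^{n-t}I_t(\mu)$ (valid for all $0<t<n$), via a dyadic decomposition in $\abs{\xi}$; carrying this out is where the factor $\alpha-s$ actually comes from. As written, your proposal proves only the $\max\{\alpha,m\}=m$ half of the statement.
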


Compared to Theorem~\ref{thm:Mattila}, the estimate \eqref{eq:projFrost} provides new information in the following two situations: 
\begin{enumerate}
\item (Projection to lines) $m=1$ and $\dimH(A) \in {]0, 1 + \frac{1}{n-1}[}$,
\item (Projection to hyperplanes) $m = n - 1$ and $\dimH(A) \in {]n- 1 -\frac{1}{n-1}, n[}$.
\end{enumerate}
For example, for $n = 2$ and $m = 1$, the case treated by Bourgain~\cite{Bourgain2010},
\[
\dimH \bigl\{\theta \in \Gr(\R^2,1) \mid  \dimH( \pi_\theta(A)) \leq \frac{1}{2}\dimH(A) \bigr\} = 0,
\]
for all analytic sets $A$ such that $0 < \dimH(A) < 2$. This estimate is also obtained by Oberlin~\cite{Oberlin2012} using different methods. Bourgain's approach has the advantage of giving an estimate with the $\epsilon$ and $\kappa$ terms, or in other words that for any $c > 0$,
\[
\lim_{\epsilon \to 0} \, \sup_A \, \dimH \bigl\{\theta \in \Gr(\R^2,1) \mid  \dimH( \pi_\theta(A)) \leq \frac{1}{2}\dimH(A) + \epsilon \bigr\} = 0,
\]
where $A$ ranges over all analytic sets with Hausdorff dimension between $c$ and $2-c$. Note also that Corollary~\ref{cr:projFrost} can be reformulated in a similar way.

Theorem~\ref{cr:proj} can be combined with Remez-type inequalities to study restricted family of projections. Instead of looking at projections to all subspaces, we restrict our attention to a family of subspaces. The non-concentration property in Theorem~\ref{cr:proj} translates to a transversality condition on the family. In the following corollary, we assume the family to be analytic and not contained in any proper Schubert cycle. 

\begin{coro}\label{cr:projRes}
Let $m < n$ be positive integers. Given $0 < \alpha < n$ and $\kappa > 0$, there is $\epsilon > 0$ such that the following holds.
Let $p \geq 1$ be an integer and $\Omega \subset \R^p$ a connected open subset. Let $V \colon \Omega \to \Gr(\R^n,m)$ be a real analytic map.
Let $A \subset \R^n$ be an analytic set of dimension $\dimH(A) = \alpha$. If for any $W \in \Gr(\R^n, n-m)$, there exists $t \in \Omega$ such that $V(t) \oplus W = \R^n$,
then for any relatively compact subset $\Omega'$ in $\Omega$, there exists a constant $d = d(V,\Omega') > 0$ such that
\begin{equation}\label{eq:projRes}
\dimH \bigl\{t \in \Omega' \mid  \dimH( \pi_{V(t)}(A)) \leq \frac{m}{n}\alpha + \epsilon \bigr\} \leq p - 1 + d\kappa.
\end{equation}
If moreover $V$ is polynomial then $d$ is independent of $\Omega'$ and proportional to the degree.
\end{coro}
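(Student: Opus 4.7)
The plan is to derive a contradiction from Theorem~\ref{cr:proj} by producing, from any overly large exceptional set in $\Omega'$, a nonzero measure on $\Gr(\R^n, m)$ that violates its conclusion. Let $\epsilon = \epsilon(\alpha, \kappa)$ be the constant furnished by Theorem~\ref{cr:proj}, and denote the exceptional set by
\[E := \bigl\{ t \in \Omega' : \dimH(\pi_{V(t)} A) \leq \tfrac{m}{n}\alpha + \epsilon \bigr\}.\]
Suppose for contradiction that $\dimH E > p - 1 + d\kappa$ for a constant $d = d(V, \Omega')$ to be chosen. Fix $\beta$ slightly larger than $p - 1 + d\kappa$; by Frostman's lemma there is a nonzero compactly supported Borel measure $\nu$ on (a compact subset of) $E$ satisfying $\nu(\Ball(t, r)) \leq r^\beta$ for all $t \in \R^p$ and $r > 0$. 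Its pushforward $\mu := V_* \nu$ is a nonzero measure on $\Gr(\R^n, m)$ concentrated on the set $\{ V' : \dimH \pi_{V'} A \leq \tfrac{m}{n}\alpha + \epsilon \}$. Theorem~\ref{cr:proj} will be contradicted once $\mu$ (up to a harmless rescaling) is shown to satisfy the non-concentration property with exponent $\kappa$.

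For each $W \in \Gr(\R^n, n - m)$ set $f_W(t) := \dang(V(t), W)$, a real-analytic function of $t$ (polynomial of degree $\leq m \deg V$ when $V$ is polynomial). Then
\[\mu(\Vang(W, \rho)) = \nu\bigl(\{ t \in \overline{\Omega'} : f_W(t) \leq \rho \}\bigr),\]
so the non-concentration of $\mu$ reduces to estimating sublevel sets of $f_W$. The transversality hypothesis guarantees $f_W \not\equiv 0$ on the connected open set $\Omega$ for every $W$, hence its zero set $Z_W$ is a real-analytic subvariety of dimension at most $p - 1$. The key technical input is a uniform (in $W$) Remez--Łojasiewicz-type inequality: there exist constants $C, c > 0$, depending on $V$ and $\Omega'$, such that for all $W$ and all $\rho > 0$,
\[\{ t \in \overline{\Omega'} : f_W(t) \leq \rho \} \subset \bigl\{ t : \mathrm{dist}(t, Z_W) \leq C \rho^c \bigr\},\]
and such that $Z_W \cap \overline{\Omega'}$ has $(p-1)$-dimensional Hausdorff measure bounded uniformly in $W$. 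Covering this tubular neighborhood by $O(\rho^{-c(p-1)})$ balls of radius $C \rho^c$ and applying the Frostman bound on each, one obtains
\[\mu(\Vang(W, \rho)) \lesssim \rho^{-c(p-1)} \cdot \rho^{c\beta} = \rho^{c(\beta - (p-1))}.\]
Setting $d := 1/c$ makes the exponent strictly greater than $\kappa$, and rescaling $\mu$ by a suitable factor then yields $\mu(\Vang(W, \rho)) \leq \rho^\kappa$ for every $W$ and every $\rho > 0$, contradicting Theorem~\ref{cr:proj}.

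The principal obstacle is establishing the uniform-in-$W$ Remez inequality together with the correct quantitative dependence of $c$. In the polynomial case, $f_W$ is a polynomial in $t$ of degree $\leq m \deg V$ whose coefficients depend continuously on $W$, and the classical Brudnyi--Ganzburg inequality, combined with the uniform lower bound $\sup_{\Omega'} f_W \geq c_0 > 0$ (which holds by continuous dependence and compactness of $\Gr(\R^n, n - m)$), yields $c \sim 1/\deg V$ independently of $\Omega'$, giving the stated proportionality $d \propto \deg V$. In the general real-analytic case, one invokes a local Łojasiewicz inequality at each point of the compact parameter space $\overline{\Omega'} \times \Gr(\R^n, n - m)$, where $(t, W) \mapsto f_W(t)$ is jointly analytic and nowhere vanishing to infinite order in $t$; a standard compactness argument then extracts a uniform $c > 0$, which in this case depends on $\Omega'$.
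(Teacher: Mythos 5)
Your overall architecture coincides with the paper's: put a Frostman measure $\nu$ on the exceptional parameter set, push it forward by $V$, prove a non-concentration bound for $V_*\nu$ via a uniform-in-$W$ sublevel-set estimate for $t \mapsto \dang(V(t),W)$, and contradict Theorem~\ref{cr:proj}; your exponent bookkeeping ($d = 1/c$, exponent $c(\beta-(p-1))>\kappa$) and the final rescaling are fine. The gap is in the key technical input. The paper obtains the sublevel-set estimate as a black box from Brudnyi's Remez-type inequality (Theorem~\ref{thm:Brudnyi}), which is stated in exactly the form needed: a \emph{covering-number} bound $\Ncov[\rho](\{t \in \Omega' \mid \abs{f(W,t)} \leq \rho^d\}) \leq C\rho^{1-p}$, uniform over the compact parameter space $\Gr(\R^n,n-m)$, applied to the jointly analytic function $f(W,t)=\dang(V(t),W)^2$. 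Your proposed substitute --- a uniform Łojasiewicz containment of the sublevel set in a $C\rho^c$-tube around $Z_W$, plus a uniform bound on $\mathcal{H}^{p-1}(Z_W\cap \overline{\Omega'})$ --- is asserted rather than proved, and the second half does not deliver what you then use: finiteness of the $(p-1)$-dimensional Hausdorff measure of $Z_W$ does \emph{not} imply that its $C\rho^c$-neighborhood can be covered by $O(\rho^{-c(p-1)})$ balls of radius $C\rho^c$; for that you need an upper Minkowski-content/covering bound, and you need it uniformly in $W$. Similarly, uniformity in $W$ of the Łojasiewicz exponent for a family whose zero sets vary with the parameter is a genuine theorem about analytic (subanalytic) families, not a routine compactness argument. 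Finally, in the polynomial case the classical Brudnyi--Ganzburg inequality bounds the \emph{Lebesgue measure} of sublevel sets, which is of no use against a Frostman measure of dimension $\beta<p$: a set of tiny Lebesgue measure can carry almost all of $\nu$. The covering-number formulation is exactly what the cited Brudnyi 2010 theorem supplies in one stroke, and it is the part of your argument that is missing.

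Two smaller corrections. The function $\dang(V(t),W)$ is neither real-analytic nor polynomial in $t$ near its zero set (it is a norm, hence involves an absolute value or square root); you should work with $\dang(V(t),W)^2$, as the paper does --- this only rescales the exponent by a factor $2$, and in the polynomial case it is this square whose degree is controlled by $\deg V$, which is what yields $d$ proportional to the degree. Also, one should record that the exceptional set is Borel so that Frostman's lemma applies, and the identity $\mu(\Vang(W,\rho)) = \nu(\{t \mid f_W(t)\leq\rho\})$ should be an inequality with the sublevel set taken inside $\overline{\Omega'}$; these are minor. In short: the skeleton is the paper's, but the crux --- the uniform covering-type Remez inequality --- is precisely where the work lies, and as written your proposal leaves it unproved and partly incorrect as stated.
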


The study of restricted family of projections started long ago and saw significant progress recently. We refer the reader to, for example, \cite{PeresSchlag,JJLL,JJK,FasslerOrponen,Orponen2015,Chen2017,KaenmakiOrponenVenieri,OrponenVenieri}. The recent interest is focused on whether for almost all parameters $t$, the dimension of the projection $\dimH( \pi_{V(t)}(A))$ is at least the minimum between $\dimH(A)$, the original dimension, and $m$, the dimension of the subspaces to which we project (see for example~\cite[Conjecture 1.6]{FasslerOrponen}). Corollary~\ref{cr:projRes} deals with a different but parallel question. Here we compare $\dimH( \pi_{V(t)}(A))$ to $\frac{m}{n}\dimH(A) + \epsilon$. Understandably, the exceptional set is much smaller. 

\subsection{Ergodic motivation}
In~\cite{BFLM}, Bourgain, Furman, Lindenstrauss and Mozes used Bourgain's discretized projection theorem together with harmonic analysis to show equidistributions of linear random walks on the torus. Our primary motivation behind Theorem~\ref{thm:proj} resides also in this ergodic problem. In Bourgain-Furman-Lindenstrauss-Mozes theorem, there is technical assumption which is the proximality. While a subgroup $\Gamma \subset \SL_d(\Z)$ acts on the torus, its transpose $\transp{\Gamma}$ acts on Fourier coefficients. Bourgain's discretized projection theorem is used to study large Fourier coefficients under this action. By the theory of random matrix products, if $\Gamma$ is proximal, then large random products in $\Gamma$ behave like rank one projections composed with rotations, if viewed at an appropriate scale. When $\Gamma$ is not proximal, they behave like rank $p$ projections composed with rotations, where $p \geq 2$ is the proximality dimension of the random walk. Thus, we hope Theorem~\ref{thm:proj} will be useful for understanding the non-proximal situation.

\subsection{Strategy of the proof}
Now we describe an outline of the proof of Theorem~\ref{thm:proj}. Fix integers $0 < m < n$ and a real number $0 < \alpha < n$. For $\epsilon > 0$ and bounded subset $A \subset \R^n$ we define the set of exceptional directions to be
\begin{multline}\label{eq:ECAeps}
\EC(A,\epsilon) = \{V \in \Gr(\R^n,m) \mid \exists A'\subset A,\; \Ncov(A') \geq \delta^\epsilon \Ncov(A)\\
\text{ and }\; \Ncov(\pi_V(A')) < \delta^{-\frac{m}{n}\alpha - \epsilon}\}.
\end{multline}
When there is no ambiguity, we omit the variable $\epsilon$ and write simply $\EC(A)$. Our task is to bound $\mu(\EC(A))$ given the distribution $\mu$ of the subspaces. In order to prove Theorem~\ref{thm:proj} which says $\mu(\EC(A)) \leq \delta^\epsilon$ under the assumptions of the theorem, we prove instead that $\mu(\EC(A')) \leq \delta^\epsilon$ for some subset $A'$ of $A$.

\begin{thm}\label{thm:main}
Let $m < n$ be positive integers. Given $0 < \alpha < n$ and $\kappa > 0$, there exists $\epsilon > 0$ such that the following holds for sufficiently small $\delta > 0$. Let $A$ be a subset of $\R^n$ contained in the unit ball $\Ball(0,1)$. Let $\mu$ be a probability measure on $\Gr(\R^n,m)$. Assume \eqref{eq:sizeA}, \eqref{eq:nonconA} and \eqref{eq:nonconmu}, then there exists $A' \subset A$ such that 
\[\mu(\EC(A')) \leq \delta^\epsilon.\]
\end{thm}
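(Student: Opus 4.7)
My plan is to prove Theorem~\ref{thm:main} by induction on the rank $m$, with Bourgain's discretized projection theorem providing the base case $m=1$. The subset $A'$ produced in the conclusion will be a \emph{regular} subset $A^* \subset A$ obtained by a standard multi-scale pigeonhole: $\Ncov(A^*) \geq \delta^{O(\epsilon)}\Ncov(A)$, the covering number of $A^*$ at each dyadic scale $\rho \in [\delta,1]$ behaves uniformly like $\rho^{-\alpha^*}$, and condition~\eqref{eq:nonconA} is inherited at every scale. Arguing by contradiction, I would assume $\mu(\EC(A^*)) > \delta^\epsilon$ and, for each exceptional $V$, extract a witnessing subset $A_V^{**}\subset A^*$ with $\Ncov(A_V^{**}) \geq \delta^\epsilon \Ncov(A^*)$ and $\Ncov(\pi_V(A_V^{**})) < \delta^{-\frac{m}{n}\alpha - \epsilon}$.

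The geometric core is a factorization of the rank-$m$ projection through rank-$1$ and rank-$(m-1)$ projections. For $V \in \Gr(\R^n,m)$, choose a line $\ell\subset V$ and set $V' = V\cap \ell^\perp \in \Gr(\ell^\perp,m-1)$. The orthogonal decomposition $V = \ell \oplus V'$ gives $\pi_V = \pi_\ell + \pi_{V'}$, and a standard fiber-counting argument yields
\[\Ncov(\pi_V(A_V^{**})) \gtrsim \Ncov(\pi_\ell(A_V^{**}))\cdot \min_{y}\Ncov\bigl(\pi_{V'}(A_V^{**}\cap \pi_\ell^{-1}(y))\bigr).\]
Pushing $\mu$ forward along $V\mapsto \ell$ (with a randomized choice of $\ell\subset V$) gives a measure $\mu_1$ on $\Gr(\R^n,1)$, and disintegrating $\mu = \int \mu_\ell\dd\mu_1(\ell)$ puts each $\mu_\ell$ on $\Gr(\ell^\perp,m-1)$. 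For a $\mu_1$-typical $\ell$, Bourgain's theorem applied to $\pi_\ell$ and $A^*$ forces $\Ncov(\pi_\ell(A_V^{**}))\geq\delta^{-\beta - O(\epsilon)}$ for some $\beta \geq \alpha/n$; a discrete Marstrand-type slicing shows a typical fiber $A^*\cap \pi_\ell^{-1}(y)$ has covering number $\geq \delta^{-(\alpha-\beta)+O(\epsilon)}$ inside the affine hyperplane $\pi_\ell^{-1}(y)$ (identified with $\ell^\perp$) and inherits a Frostman-type bound from \eqref{eq:nonconA}. Applying the inductive hypothesis inside $\ell^\perp$ with the conditional measure $\mu_\ell$ yields $\Ncov(\pi_{V'}(A_V^{**}\cap \pi_\ell^{-1}(y)))\geq \delta^{-\frac{m-1}{n-1}(\alpha-\beta) - O(\epsilon)}$. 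The function $f(\beta)=\beta + \tfrac{m-1}{n-1}(\alpha-\beta)$ is increasing on $[\alpha/n,\alpha]$ since $m<n$, with $f(\alpha/n) = \tfrac{m}{n}\alpha$, so the product bound gives $\Ncov(\pi_V(A_V^{**}))\geq \delta^{-\frac{m}{n}\alpha - O(\epsilon)}$, contradicting $V\in\EC(A^*)$.

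The main obstacle I foresee is the \textbf{transfer of the non-concentration hypothesis \eqref{eq:nonconmu} under the disintegration} $\mu=\int\mu_\ell\dd\mu_1(\ell)$: the Schubert cycles $\Vang(W,0) \subset \Gr(\R^n,m)$ do not slice cleanly over the fibration $V\mapsto\ell$. A hyperplane $W\in \Gr(\R^n,n-m)$ close to $V$ in the $\dang$-sense need not push forward to a line $\ell$ close to a projective subspace, nor induce a hyperplane in $\ell^\perp$ close to $V'$. Consequently, the $\rho^\kappa$-bound on $\mu(\Vang(W,\rho))$ does not directly transfer to the non-concentration bounds required by Bourgain's theorem for $\mu_1$ or by the inductive hypothesis for each $\mu_\ell$. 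Quantifying this transfer will require an explicit description of the Schubert stratification along the fibers, combined with a Remez- or \L{}ojasiewicz-type inequality on the Grassmannian controlling the tangency order between $\Vang(W,0)$ and generic fibers of $V\mapsto\ell$. The loss in $\kappa$ at each reduction must be tracked so that the initial $\epsilon$ survives all $m-1$ iterations. A secondary technical point is the discrete Marstrand slicing invoked above, which has to be compatible with the multi-scale regularity of $A^*$; I would derive it by pigeonholing on the fibers of $\pi_\ell$ at every scale.
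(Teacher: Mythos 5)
Your induction on the rank $m$ through the disintegration $\mu=\int\mu_\ell\,\dd\mu_1(\ell)$ has a structural, not merely technical, flaw at the step you yourself flag: the hypothesis \eqref{eq:nonconmu} is simply not inherited by the conditional measures $\mu_\ell$, and no Remez/\L{}ojasiewicz control of the Schubert stratification can restore it, because the $\mu_\ell$ can be Dirac masses. Take for instance $\mu$ supported on an analytic curve (or any low-dimensional submanifold) of $\Gr(\R^n,m)$ transverse to the Schubert cycles --- exactly the measures Corollary~\ref{cr:projRes} is designed to handle --- which satisfies \eqref{eq:nonconmu} with some $\kappa>0$; for $m\geq 2$ the incidence set $\{(V,\ell)\mid \ell\subset V,\ V\in\Supp\mu\}$ then maps generically finite-to-one to $\Gr(\R^n,1)$, so for $\mu_1$-a.e.\ $\ell$ the conditional $\mu_\ell$ is atomic, and an atomic measure violates every non-concentration bound required by the rank-$(m-1)$ inductive hypothesis (choose the complementary subspace through the atom). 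Since the exceptional set in the conclusion of Theorem~\ref{thm:main} may contain any single prescribed plane, the inductive hypothesis gives you nothing at the specific $V'=V\cap\ell^\perp$ you need. This is precisely why the paper does not induct along the flag fibration: its base case is $n=2m$, proved by a sum--product theorem (Theorem~\ref{thm:ActionRn}) combined with Balog--Szemer\'edi--Gowers, and its reduction steps draw several \emph{independent} planes $V_1,\dotsc,V_q\sim\mu$ and pass to their sum (Proposition~\ref{pr:nqmrcase}) or intersection (Proposition~\ref{pr:nqn-mrcase}), so that the non-concentration of the induced measure comes from independence via Lemma~\ref{lm:sumsOK}, never from conditioning; slices of $A$ are used only in the $m\mid n$ reduction (Proposition~\ref{pr:nqmcase}), where the measure on $\Gr(W,m)$ is obtained by projecting $\mu$ itself (Lemma~\ref{lm:piWVok}), not by disintegrating it.

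A second genuine gap is the claim that a fiber $A^*\cap\pi_\ell^{-1}(y)$ ``inherits a Frostman-type bound from \eqref{eq:nonconA}''. If the fiber has covering number about $\delta^{-(\alpha-\beta)}$, the only bound \eqref{eq:nonconA} yields is $\Ncov\bigl(\text{fiber}\cap\Ball(x,\rho)\bigr)\leq\delta^{-\epsilon}\rho^{\kappa}\Ncov(A)$, which is weaker than the non-concentration needed to apply the inductive hypothesis to the fiber by the huge factor $\Ncov(A)/\Ncov(\text{fiber})\approx\delta^{-\beta}$; pigeonholing over fibers and scales regularizes sizes but cannot create non-concentration. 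Obtaining it is a substantial part of the paper's work (Lemmas~\ref{lm:NCproj} and \ref{lm:NCslice}), is achieved only in the case $m\mid n$, and crucially uses the measure $\mu$ itself (intersecting many cylinders in near-orthogonal random directions against \eqref{eq:nonconA}). Smaller issues --- trimming light fibers before taking the minimum in your fiber-counting inequality, and verifying non-concentration of the pushforward $\mu_1$ on lines for Bourgain's theorem --- look repairable, but the two gaps above break the proposed induction as it stands.
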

This statement is seemingly weaker, but there is actually a rather formal argument which allows to deduce Theorem~\ref{thm:proj} from Theorem~\ref{thm:main}. We will show this implication in Proposition~\ref{pr:unionAps}.

The proof of Theorem~\ref{thm:main} starts with the special case where $n = 2m$.
\begin{prop}\label{pr:n2mcase}
Theorem~\ref{thm:main} is true if $n = 2m$.
\end{prop}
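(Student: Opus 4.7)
The strategy is to reduce the $n = 2m$ case to a discretized sum-product type phenomenon in the matrix algebra $\Mat_m(\R)$, playing the role that the classical sum-product theorem on $\R$ plays in Bourgain's original $m = 1$, $n = 2$ proof.

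First I would set up a matrix chart on $\Gr(\R^{2m}, m)$. Fix a splitting $\R^{2m} = \R^m \times \R^m$ and let $W_0 = \{0\} \times \R^m$. The open chart of subspaces transverse to $W_0$ is parametrized by $T \in \Mat_m(\R)$ via $V_T = \{(x, Tx) : x \in \R^m\}$. Applying \eqref{eq:nonconmu} with $W = W_0$, and with a few other $W$'s used to trap $T$ in a bounded region, we may discard a $\mu$-set of measure at most $\delta^{\epsilon}$ and work on a compact piece of $\Mat_m(\R)$ on which the chart is bi-Lipschitz. Each Schubert cycle $\Vang(W, 0)$ pulls back through this chart to a determinantal hypersurface, so the push-forward $\widetilde\mu$ of the restricted $\mu$ inherits the bound
\[
\widetilde\mu\bigl(\Nbd(H, \rho)\bigr) \leq \delta^{-O(\epsilon)} \rho^\kappa
\]
for every proper real algebraic subvariety $H \subset \Mat_m(\R)$ of bounded degree, and in particular for every affine hyperplane.

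A direct computation then shows that $\pi_{V_T} \colon \R^{2m} \to V_T$ equals, up to post-composition with the invertible linear map $(I + T^* T)^{-1}$, the bilinear map $(a, b) \mapsto a + T^* b$. Since $T$ and $\dang(V_T, W_0)^{-1}$ are both bounded on our chart, this change of variables only distorts $\delta$-covering numbers by a constant factor. Proposition~\ref{pr:n2mcase} thus reduces to showing that for $\widetilde\mu$-most $T$ and some $A' \subset A$ with $\Ncov(A') \geq \delta^{O(\epsilon)} \Ncov(A)$,
\[
\Ncov\bigl(\{a + T^* b \mid (a, b) \in A'\}\bigr) \geq \delta^{-\alpha/2 - \epsilon},
\]
which matches $\frac{m}{n}\alpha = \alpha/2$. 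To put this into sum-product form I would, via a multiscale pigeonhole argument driven by \eqref{eq:nonconA}, extract from $A'$ a roughly product-shaped subset $B \times C$ with $B, C \subset \R^m$ each Frostman-regular at some scale and with Hausdorff exponents summing to essentially $\alpha$, after which the target quantity becomes $\Ncov(B + T^* C)$.

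The remaining estimate
\[
\widetilde\mu\bigl(\{T \mid \Ncov(B + T^* C) < \delta^{-\alpha/2 - \epsilon}\}\bigr) \leq \delta^{\epsilon}
\]
is the heart of the proof and the step I expect to be the main obstacle. It says that a sufficiently well-spread family of matrices cannot uniformly compress the Minkowski sum of two Frostman-regular subsets of $\R^m$, and should follow from a matrix-valued analogue of Bourgain's discretized sum-product theorem, exploiting the non-concentration of $\widetilde\mu$ on proper algebraic subvarieties of $\Mat_m(\R)$. Two technical subtleties stand out: producing the product structure $B \times C$ with matched Frostman regularity while losing only a $\delta^{O(\epsilon)}$ factor requires careful bookkeeping of scales, and the determinantal shape of non-concentration supplied by \eqref{eq:nonconmu} must be aligned with the form of non-concentration demanded by the matrix sum-product estimate. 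Together, these reductions constitute the real work hidden in Proposition~\ref{pr:n2mcase}.
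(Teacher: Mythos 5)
There are two genuine gaps, and together they amount to skipping the actual content of the proof. The first is the source of the product structure. You propose to extract, from an arbitrary large subset $A'$, a product-shaped set $B\times C$ in the fixed splitting $\R^m\times\R^m$ with ``Hausdorff exponents summing to essentially $\alpha$'', using only a multiscale pigeonhole driven by \eqref{eq:nonconA}. This is impossible in general: for $m=1$, $n=2$, take $A$ to be the $\delta$-neighborhood of the unit circle, which satisfies \eqref{eq:sizeA} and \eqref{eq:nonconA} with $\alpha=\kappa=1$; an elementary computation (all $b^2$, resp.\ $c^2$, with $(b,c)\in B\times C\subset A$ must lie in an interval of length $O(\delta)$) shows that every product $B\times C\subset A$ has $\Ncov(B)\Ncov(C)=O(\delta^{-1/2})$, i.e.\ exponent sum at most $\alpha/2$, nowhere near $\alpha$. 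In the actual argument the product structure does not come from $A$ alone but from the assumed \emph{failure} of the statement: one uses $\mu$ and \eqref{eq:nonconmu} to find two transverse exceptional directions $V_1,V_2$ and a large $A_2\subset A$ with $\Ncov(\pi_{V_1}(A_2)),\Ncov(\pi_{V_2}(A_2))\le\delta^{-\alpha/2-\epsilon}$, so that $A_2$ occupies a $\delta^{O(\epsilon)}$-fraction of $\pi_{V_1}(A_2)\times\pi_{V_2}(A_2)$. Note also a quantifier problem in your reduction: $V\notin\EC(A)$ means \emph{every} large subset of $A$ has large projection, whereas you reduce to producing \emph{some} large $A'$ with large image, which is strictly weaker; and since the bad subset $A_V$ witnessing $V\in\EC(A_2)$ depends on $V$, one needs the energy bound, the Balog--Szemer\'edi--Gowers theorem (Theorem~\ref{thm:BSG+}), the Ruzsa and Pl\"unnecke--Ruzsa inequalities (Lemmata~\ref{lm:RuzsaTri}, \ref{lm:RuzsaSum}) and the intersection Lemma~\ref{lm:bigCap} to replace the $V$-dependent data by a single set $X_\star$ that works simultaneously for a positive-$\mu$-measure family of directions.

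The second gap is the black box you invoke. The estimate $\widetilde\mu\bigl(\{T \mid \Ncov(B+T^*C)<\delta^{-\alpha/2-\epsilon}\}\bigr)\le\delta^\epsilon$ for Frostman-regular $B,C$ is not an available matrix sum-product theorem; it is essentially the discretized projection theorem for product sets, i.e.\ the statement being proved. What is available is Theorem~\ref{thm:ActionRn}, which is a \emph{growth} statement: $\Ncov(X+X)+\max_{a\in\AC}\Ncov(X+aX)\ge\delta^{-\epsilon}\Ncov(X)$ for a single set $X$ and a family $\AC$ of endomorphisms satisfying a scale-by-scale lower bound $\Ncov[\rho](\AC)\ge\delta^{O(\epsilon)}\rho^{-\kappa}$ for all $\rho\ge\delta$ and an irreducibility condition. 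To contradict it one must first manufacture $X_\star$ of size about $\delta^{-\alpha/2}$ with $\Ncov(X_\star+X_\star)$ and $\Ncov(X_\star+\phi_V\phi_\star^{-1}X_\star)$ all bounded by $\delta^{-\alpha/2-O(\epsilon)}$ for many $V$ (this is exactly what the BSG--Ruzsa chain produces), translate \eqref{eq:nonconmu} into the required non-concentration and irreducibility of the derived family $\AC=\{-\phi_V\phi_\star^{-1}\}$ --- your hyperplane non-concentration of $\widetilde\mu$ is not yet this, and the bridge is Lemma~\ref{lm:phiVrich} --- and verify the Frostman condition on $X_\star$, which for a projection of $A$ is not automatic and is the content of Lemma~\ref{lm:NCproj} (it uses $\mu$ and an intersection-of-cylinders argument, not bookkeeping of scales). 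These omitted steps are precisely the proof of Proposition~\ref{pr:n2mcase}.
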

As in the $m=1$ case in \cite{Bourgain2010}, this special case is proved using a sum-product theorem. For $m > 1$, we need the higher dimensional sum-product estimate established in \cite{He2016} which we recall here. Below and throughout this paper, for subsets $X,Y$ of a linear space, we denote by $X + Y$ their sumset :
\[X + Y = \ens{x + y \mid x \in X,\, y \in Y}.\] 
\begin{thm}[{\cite[Theorem 3]{He2016}}]\label{thm:ActionRn}
Let $m$ be a positive integer. Given $\kappa > 0$ and $\sigma < m$, there is $\epsilon > 0$ such that the following holds for $\delta > 0$ sufficiently small. Let $\AC$ be a subset of the space of linear endomorphisms $\End(\R^m)$ and $X$ a subset of $\R^m$, assume that
\begin{enumerate}
\item \label{it:AinBall} $\AC \subset \Ball(0,\delta^{-\epsilon})$,
\item \label{it:sizeAatrho} $\forall \rho \geq \delta$, $\Ncov[\rho](\AC) \geq \delta^\epsilon \rho^{-\kappa}$,
\item \label{it:irreducible} for any nonzero proper linear subspace $W \subset \R^n$, there is $a \in \AC$ and $w \in W \cap \Ball(0,1)$ such that $d(aw,W) \geq \delta^\epsilon$.
\item \label{it:XinBall} $X \subset \Ball(0,\delta^{-\epsilon})$,
\item \label{it:sizeXatrho} $\forall \rho \geq \delta$, $\Ncov[\rho](X) \geq \delta^\epsilon \rho^{-\kappa}$,
\item \label{it:sizeXleq} $\Ncov(X) \leq \delta^{-\sigma - \epsilon}$.
\end{enumerate}
Then, $\Ncov(X + X) + \max_{a \in \AC}\Ncov(X+aX) \geq \delta^{-\epsilon}\Ncov(X)$.
\end{thm}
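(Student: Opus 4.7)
\medskip

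\noindent\textbf{Proof plan.} I would argue by contradiction. Suppose both $\Ncov(X + X) \leq \delta^{-\epsilon_0}\Ncov(X)$ and $\Ncov(X + aX) \leq \delta^{-\epsilon_0}\Ncov(X)$ for every $a \in \AC$, where $\epsilon_0 \ll \epsilon$ is a parameter to be optimized. The first step is a discretized Pl\"unnecke--Ruzsa argument: by pigeonholing on translates, one extracts a large subset $X_* \subset X$ (with $\Ncov(X_*) \geq \delta^{C\epsilon_0}\Ncov(X)$) such that every iterated sumset $X_* + a_1 X_* + \cdots + a_k X_*$, $a_i \in \AC$, has covering number at most $\delta^{-C_k\epsilon_0}\Ncov(X)$. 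Heuristically, $X_*$ is approximately invariant under each elementary operation $y \mapsto y + a y'$ with $a \in \AC$, $y' \in X_*$.

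The heart of the argument is a dichotomy on the effective dimension of $X_*$. Call $X_*$ \emph{subspace-concentrated} if, for some proper linear subspace $W \subsetneq \R^m$, more than a $\delta^{\sqrt{\epsilon_0}}$-fraction of the $\delta$-balls covering $X_*$ lies inside the $\delta^{\sqrt{\epsilon_0}}$-neighbourhood of $W$; otherwise call it \emph{full}. In the \emph{full} case, I would choose a $1$-dimensional quotient $\R^m \twoheadrightarrow L$ so that the pushforward $\bar{X}_* \subset L$ still satisfies a $(\kappa/2)$-non-concentration condition at all scales $\rho \geq \delta$ (this is an averaging argument over quotients using hypothesis \ref{it:sizeXatrho}). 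From hypothesis \ref{it:sizeAatrho} I would extract a $\delta^{-\kappa/2}$-separated family of induced scalars $\lambda_a \in \R$ ($a \in \AC$) acting on $L$. Bourgain's one-dimensional discretized sum-product theorem then gives expansion $\Ncov(\bar{X}_* + \lambda_a \bar{X}_*) \geq \delta^{-\tau}\Ncov(\bar{X}_*)$ for some $a \in \AC$ and $\tau = \tau(\kappa) > 0$. Lifting back, $\Ncov(X_* + aX_*) \geq \delta^{-\tau + o(1)}\Ncov(X_*)$, contradicting the assumption once $\epsilon_0 \ll \tau$.

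In the \emph{subspace-concentrated} case with witness $W$, I would invoke hypothesis \ref{it:irreducible} to produce $a \in \AC$ and $w \in W \cap \Ball(0,1)$ with $d(aw, W) \geq \delta^\epsilon$. Then $a$ displaces $W$ transversely, so the tube around $W$ containing most of $X_*$ is sent to a tube around $aW$ that meets the original only in a strictly lower-dimensional slab. A direct counting of $\delta$-balls in $X_* + aX_*$, exploiting the transverse separation $\geq \delta^\epsilon$ together with the non-concentration of $X_*$ inside the $W$-tube at scale $\delta^\epsilon$ (furnished by hypothesis \ref{it:sizeXatrho}), yields $\Ncov(X_* + aX_*) \geq \delta^{-\epsilon + O(\sqrt{\epsilon_0})}\Ncov(X_*)$, again a contradiction. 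The whole scheme is in fact an induction on $m$: the base case $m=1$ is Bourgain's theorem, and the subspace-concentrated case, besides providing its own contradiction via \ref{it:irreducible}, also reduces, if desired, to a lower-dimensional instance inside $W$ via a quotient argument, with hypothesis \ref{it:sizeXleq} and $\sigma < m$ ensuring the inductive parameters stay in range.

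\medskip

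\noindent\textbf{Main obstacle.} The most delicate point is the \emph{full} case: descending the multi-scale Frostman-type condition \ref{it:sizeXatrho} on $X$ to a comparable condition on a $1$-dimensional quotient, while simultaneously producing a $\delta^{-\Omega(\kappa)}$-spread family of scalars on that same quotient out of $\AC$ via \ref{it:sizeAatrho} and \ref{it:irreducible}. Making a single quotient map do both jobs requires a double pigeonhole on subspaces, and one has to maintain the strict hierarchy $\epsilon \ll \tau(\kappa)$, $\epsilon_0 \ll \epsilon^2$, $\sqrt{\epsilon_0} \ll \epsilon$ all the way through, so that neither the Pl\"unnecke--Ruzsa losses nor the concentration threshold swallow the sum-product gain.
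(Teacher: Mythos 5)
This theorem is not proved in the paper at all: it is imported verbatim as Theorem 3 of \cite{He2016} and used as a black box, so there is no internal proof to compare against. Judged on its own merits, your sketch has a genuine gap at its central step. In the \emph{full} case you reduce to Bourgain's one-dimensional sum--product theorem by pushing $X_*$ to a $1$-dimensional quotient $L$ and speaking of ``induced scalars $\lambda_a$ acting on $L$''. A general $a \in \End(\R^m)$ induces no map at all on a quotient $\R^m \to L$ unless the kernel hyperplane is $a$-invariant, so $\pi(x + a x') \neq \pi(x) + \lambda_a \pi(x')$ and the sum $X + aX$ simply does not descend to a one-dimensional sum--product expression; moreover, even for a fixed quotient the non-concentration of $\AC$ (hypothesis~\ref{it:sizeAatrho}, which lives in the $m^2$-dimensional space $\End(\R^m)$) does not transfer to a $\delta^{-\Omega(\kappa)}$-spread family of real numbers, since the map $a \mapsto \lambda_a$ (even when defined) can collapse $\AC$ to a point. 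Circumventing exactly this obstruction is the whole content of the higher-rank theorem, so the reduction to the $m=1$ case cannot be the engine of the proof.

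The \emph{subspace-concentrated} case is also too optimistic. Hypothesis~\ref{it:sizeXatrho} bounds total covering numbers of $X$ at every scale but gives no directional spread: $X_*$ could hug a line inside $W$, in which case a single vector $w \in W$ with $d(aw,W) \geq \delta^\epsilon$ does not force $X_* + aX_*$ to sweep out $\delta^{-\epsilon}$ disjoint translates of the tube, because nothing guarantees $X_*$ has extent along the specific direction $w$. Handling such nested concentration (near subspaces of various dimensions) is where the real work lies, and the inductive fallback you mention does not go through as stated either: $\AC$ need not preserve $W$, so neither hypothesis~\ref{it:sizeAatrho} nor~\ref{it:irreducible} passes to a lower-dimensional instance inside $W$ by a simple restriction or quotient. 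The Pl\"unnecke--Ruzsa/BSG preprocessing in your first step is standard and fine, but the two halves of the dichotomy, as written, do not close.
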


The proof of Proposition~\ref{pr:n2mcase} follows closely that in \cite{Bourgain2010}. The main idea is to use additive combinatorial tools such as the Balog-Szemerédi-Gowers theorem to reduce to the situation where $A$ is a cartesian product $X \times X$ with $X \subset \R^m$. Then projections of $X \times X$ to subspaces of dimension $m$ correspond exactly to the sum-product operations $X + aX$, $a \in \End(\R^m)$, in Theorem~\ref{thm:ActionRn}. Finally, Theorem~\ref{thm:ActionRn} shows that the projection gained a factor $\delta^{-\epsilon}$ in size compared to $X$ which has half the dimension of $A$. A technical point appearing in this proof is that the set $X$, which is roughly a projection of $A$, has to satisfy the non-concentration property require by Theorem~\ref{thm:ActionRn}. This is addressed in Lemma~\ref{lm:NCproj}.


Once we have Proposition~\ref{pr:n2mcase} we would like to reduce other cases to it. First, using a simple induction, we show that Theorem~\ref{thm:main} holds if $m$ divides $n$.
\begin{prop}\label{pr:nqmcase}
Let $q \geq 3$ be an integer. If Theorem~\ref{thm:main} is true for $n' = (q - 1)m$ and $m$ then it is also true for $n = qm$ and $m$.
\end{prop}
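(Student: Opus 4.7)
The plan is to deduce the case $n = qm$ from the inductive hypothesis at $n' = (q-1)m$ by factoring each rank-$m$ projection $\pi_V$ through a $(q-1)m$-dimensional intermediate subspace $U \supset V$. Introduce the incidence variety
\[
\mathcal{F} = \bigl\{(V, U) \in \Gr(\R^n, m) \times \Gr(\R^n, (q-1)m) : V \subset U \bigr\}
\]
equipped with the joint probability measure $d\tilde\mu(V, U) = d\mu(V) \, d\nu_V(U)$, where $\nu_V$ is the Haar probability on the fiber $\{U : V \subset U\} \cong \Gr(V^\perp, (q-2)m)$. Disintegrate in the opposite order, $d\tilde\mu = d\bar\mu(U) \, d\mu_U(V)$, with $\bar\mu$ a probability measure on $\Gr(\R^n, (q-1)m)$ and $\mu_U$ a probability measure on $\Gr(U, m)$. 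Since the marginal of $\tilde\mu$ on the first factor is $\mu$, any bound on the $\tilde\mu$-measure of joint-exceptional pairs will descend, by marginalization, to a bound on $\mu(\EC(A', \epsilon))$.

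The main work is to verify that for $\bar\mu$-most $U$ (outside a set of measure $\leq \delta^{O(\epsilon)}$) the pair $(\pi_U(A), \mu_U)$ satisfies the hypotheses of the inductive hypothesis in $U \cong \R^{n'}$, with dimension parameter $\alpha' = (q-1)\alpha/q < n'$ and non-concentration exponent comparable to $\kappa$. Three checks are needed: (i) the size bound $\Ncov(\pi_U(A)) \geq \delta^{-\alpha' + O(\epsilon)}$, following from a Marstrand-Falconer-style $L^2$-energy estimate on codimension-$m$ projections fed by~\eqref{eq:nonconA} (for $\alpha < qm$ this yields $\min(\alpha, (q-1)m) \geq \alpha'$); (ii) non-concentration of $\pi_U(A)$ on balls, obtained by averaging~\eqref{eq:nonconA} over $U$; and (iii) non-concentration of $\mu_U$ at the Schubert cycles $\Vang_U(W', \rho)$ for $W' \in \Gr(U, (q-2)m)$. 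The key observation for (iii) is the identity
\[
\dang(V, W' + U^\perp) = \dang_U(V, W') \qquad \text{whenever } V \subset U,
\]
proved by expanding the defining determinant in an orthonormal basis adapted to the splitting $\R^n = U \oplus U^\perp$; it identifies $\Vang_U(W', \rho)$ with $\Vang_{\R^n}(W' + U^\perp, \rho) \cap \{V \subset U\}$, and combining with hypothesis~\eqref{eq:nonconmu} applied to $W = W' + U^\perp$ and Fubini on $\mathcal{F}$ yields the desired bound on $\mu_U(\Vang_U(W', \rho))$ for $\bar\mu$-typical $U$.

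Apply the inductive hypothesis in the strong form of Theorem~\ref{thm:proj} --- available in $\R^{n'}$ by applying Proposition~\ref{pr:unionAps} to the assumed Theorem~\ref{thm:main} for $n'$ --- to the pair $(\pi_U(A), \mu_U)$ for good $U$. This produces $\mathcal{D}_U \subset \Gr(U, m)$ with $\mu_U(\mathcal{D}_U) \geq 1 - \delta^{O(\epsilon)}$ such that every $V \in \mathcal{D}_U$ and every $B' \subset \pi_U(A)$ with $\Ncov(B') \geq \delta^{O(\epsilon)} \Ncov(\pi_U(A))$ satisfy $\Ncov(\pi_V^U(B')) \geq \delta^{-m\alpha/n - O(\epsilon)}$ (the exponent matches because $m\alpha'/n' = m\alpha/n$). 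A preliminary dyadic-pigeonhole extraction yields $A^* \subset A$ with $\Ncov(A^*) \geq \delta^{O(\epsilon)} \Ncov(A)$ on which the fibers of $\pi_U|_{A^*}$ have comparable cardinality for $\bar\mu$-typical $U$, so that for any $A'' \subset A^*$ with $\Ncov(A'') \geq \delta^\epsilon \Ncov(A^*)$ one has $\Ncov(\pi_U(A'')) \geq \delta^{O(\epsilon)} \Ncov(\pi_U(A^*))$. The factorization $\pi_V = \pi_V^U \circ \pi_U$, valid because $V \subset U$ on $\mathcal{F}$, then gives $\Ncov(\pi_V(A'')) \geq \delta^{-m\alpha/n - O(\epsilon)}$ for every joint-good pair $(V, U)$, so $V \notin \EC(A^*, \epsilon)$. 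Integrating and marginalizing,
\[
\mu(\EC(A^*, \epsilon)) \leq \int \mu_U\bigl(\Gr(U, m) \setminus \mathcal{D}_U\bigr) \, d\bar\mu(U) + \bar\mu(\text{bad } U) \leq \delta^{O(\epsilon)},
\]
which is the desired conclusion with $A' = A^*$.

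The hardest step is (i): a lower bound on $\Ncov(\pi_U(A))$ for $\bar\mu$-typical $U$. Since $U$ has dimension $n-m$, this concerns codimension-$m$ rather than rank-$m$ projections and so lies outside the scope of the inductive hypothesis; it must be supplied by an independent Marstrand-Falconer-style estimate, suitably adapted to the $\delta$-discretized setting and driven by the ball-non-concentration~\eqref{eq:nonconA}. A secondary but delicate technical point is arranging the pigeonhole reduction so that a single subset $A^*$ serves uniformly for the whole family of pairs $(V, U)$ entering the integration step, which is handled by pinning down the typical fiber-size scale by a preliminary averaging over $U$ before extracting $A^*$.
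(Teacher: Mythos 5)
Your route is genuinely different from the paper's: you factor $\pi_V=\pi^U_V\circ\pi_U$ through a random intermediate subspace $U\supset V$ of dimension $(q-1)m$ and apply the inductive hypothesis to the \emph{projection} $\pi_U(A)$, whereas the paper applies it to a \emph{slice} of $A$. That choice is exactly where your argument breaks. The paper (Lemma~\ref{lm:NCslice}) exploits the contradiction hypothesis: if the conclusion fails, some projection $\pi_{V_q}(A_q)$ is small, so by pigeonhole some fiber of $\pi_{V_q}$ meets $A$ in a set of size $\geq\delta^{-\frac{q-1}{q}\alpha+O(\epsilon)}$; the lower bound on the size of the $(q-1)m$\dash{}dimensional object to which the induction is applied comes for free. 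In your scheme this lower bound is your step (i), $\Ncov(\pi_U(A))\geq\delta^{-\frac{q-1}{q}\alpha+O(\epsilon)}$ for $\bar\mu$\dash{}typical $U$, and it cannot be supplied by a Marstrand--Falconer $L^2$-energy estimate ``fed by~\eqref{eq:nonconA}'': the hypothesis~\eqref{eq:nonconA} gives non-concentration of $A$ only at exponent $\kappa$, which may be arbitrarily small, and the standard energy computation (bounding $\mathbb{E}_U\,\En_\delta(\pi_U,A)$ by summing $\min(1,(\delta/\rho)^{\dim U})$ over pairs at distance $\rho$) then yields only $\Ncov(\pi_U(A))\gtrsim\delta^{-c\kappa}$, far short of $\delta^{-\frac{q-1}{q}\alpha}$ when $\alpha$ is not tiny; the single-ball example in the introduction shows the exponent $\frac{q-1}{q}\alpha$ is already extremal even for Haar-generic $U$. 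Worse, for $m\geq 2$ your $\bar\mu$ is a \emph{thin} family (its support has codimension about $m^2-\kappa$ in $\Gr(\R^n,(q-1)m)$, since $U$ must contain some $V\in\Supp\mu$), so even granting full $\alpha$-regularity of $A$ the transversality available from the Haar fiber plus~\eqref{eq:nonconmu} is of order $(q-2)m+\kappa$, which is below $\frac{q-1}{q}\alpha$ when $\alpha$ is close to $n$. In fact the statement you need is essentially Theorem~\ref{thm:main} for the pair $(n,(q-1)m)$ with a restricted family of subspaces; in the paper's induction that case is deduced \emph{from} the $(n,m)$ case via Proposition~\ref{pr:nqn-mrcase}, so assuming anything of that strength here would be circular.

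Two secondary points. Your step (ii) is also not a matter of ``averaging~\eqref{eq:nonconA} over $U$'': non-concentration of $\pi_U(A)$ concerns tubes $\pi_U^{-1}(x^{(\rho)})$, which~\eqref{eq:nonconA} does not control; the paper has to work for this (Lemma~\ref{lm:NCproj}), trimming small fibers and then intersecting $q$ cylinders in general position, and the argument uses the non-concentration~\eqref{eq:nonconmu} of $\mu$ in an essential way, not just an average over the family. On the other hand, your identity $\dang(V,W'+U^\perp)=\dang_U(V,W')$ for $V\subset U$ is correct (it is Lemma~\ref{lm:V2WandU} with $\dang(V,U^\perp)=1$), and your exponent bookkeeping $\frac{m\alpha'}{n'}=\frac{m\alpha}{n}$ is right; but without a valid replacement for step (i) the proof does not go through. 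If you want to salvage the intermediate-subspace idea, you must arrange, as the paper does, that the large $(q-1)m$\dash{}dimensional input to the induction is produced by the failure of the conclusion itself (a large slice), rather than demanded as an unconditional projection bound.
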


The proof of Proposition~\ref{pr:nqmcase} goes roughly as follows. If Theorem~\ref{thm:main} fails for $n = qm$ and $m$ with the set $A$. Then for a lot of $V \in \Gr(\R^n,m)$, the projection $\pi_V(A)$ is small : $\Ncov(\pi_V(A)) \leq \delta^{-\frac{m}{n}\alpha-\epsilon}$. This implies that the $\delta$-neighborhood of a fiber of $\pi_V$ has a large intersection with $A$. This means that there is a $n'$\dash{}dimensional slice (of thickness $\delta$) of $A$ which has a covering number $\geq \delta^{-\frac{n'}{n}\alpha + \epsilon}$. Now we can apply Theorem~\ref{thm:main} with $n'$ and $m$ to this slice. The main technical issue appearing here is to ensure that the slice has the correct non-concentration property and this is addressed in Lemma~\ref{lm:NCslice}.

If $m$ does not divide $n$ and $m < \frac{n}{2}$, write $n = qm + r$ with $0< r < m$. We can reduce the $(n,m)$-case to the $(n,qm)$-case. 
\begin{prop}\label{pr:nqmrcase}
Let $0<m<n$ be such that $qm < n$ where $q \geq 1$. If Theorem~\ref{thm:main} is true for $n$ and $m'= qm$ then it is also true for $n$ and $m$.
\end{prop}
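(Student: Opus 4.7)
The strategy is to chain the $(n,qm)$-case of Theorem~\ref{thm:main}, which is our hypothesis, with the $(qm,m)$-case, which is already available from Propositions~\ref{pr:n2mcase} and~\ref{pr:nqmcase} since $m$ divides $qm$. The geometric basis is the factorization $\pi_V = (\pi_V|_{V'}) \circ \pi_{V'}$, valid whenever $V \in \Gr(\R^n,m)$, $V' \in \Gr(\R^n,qm)$ and $V \subset V'$. Only the case $q \geq 2$ requires work, as $q=1$ is tautological.

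The plan is to first lift $\mu$ to a probability measure $\tilde\mu$ on $\Gr(\R^n,qm)$ by integrating over flags: for each $V \in \Gr(\R^n,m)$ let $\nu_V$ denote the rotation-invariant probability measure on the compact homogeneous space $\{V' \in \Gr(\R^n,qm) : V \subset V'\}$, and set $\tilde\mu = \int \nu_V\,d\mu(V)$. A Fubini-type computation on the incidence variety $\{(V,V') : V \subset V'\}$, in the spirit of Lemma~\ref{lm:NCslice}, should show that $\tilde\mu$ inherits the non-concentration condition~\eqref{eq:nonconmu} on $\Gr(\R^n,qm)$, with a slightly degraded exponent. Applying Theorem~\ref{thm:main} for $(n,qm)$ to $(A,\tilde\mu)$ then produces $A_1 \subset A$ such that, for $\tilde\mu$-most $V'$ and every subset $A_2 \subset A_1$ with $\Ncov(A_2) \geq \delta^{\epsilon_1}\Ncov(A_1)$, one has $\Ncov(\pi_{V'}(A_2)) \geq \delta^{-\frac{qm}{n}\alpha-\epsilon_1}$. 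For each such good $V'$, I would then work inside $V' \cong \R^{qm}$ and apply the $(qm,m)$-case of Theorem~\ref{thm:main} to the projected set $B := \pi_{V'}(A_1)$, equipped with the conditional measure $\nu_{V'}$ on $\Gr(V',m)$ obtained by disintegrating $\tilde\mu$. This should produce $B' \subset B$ such that for $\nu_{V'}$-most $V \in \Gr(V',m)$ and every large $C \subset B'$, the size $\Ncov(\pi_V|_{V'}(C))$ is bounded below by $\delta^{-\frac{m}{qm}\cdot\frac{qm}{n}\alpha-\epsilon_2} = \delta^{-\frac{m}{n}\alpha-\epsilon_2}$. Composing the two projections, pulling large subsets of $B'$ back through $\pi_{V'}$ to subsets of $A_1$, and integrating over $V'$ yields a subset $A'_0 \subset A$ witnessing $\mu(\EC(A'_0)) \leq \delta^\epsilon$, provided $\epsilon$ is taken sufficiently small with respect to $\epsilon_1$ and $\epsilon_2$.

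The main obstacle is the propagation of non-concentration through this construction: namely (i) that $\tilde\mu$ inherits non-concentration on $\Gr(\R^n,qm)$ from the hypothesis on $\mu$, (ii) that its disintegration $\nu_{V'}$ satisfies non-concentration on the smaller Grassmannian $\Gr(V',m)$, and (iii) that the projected set $B$ still enjoys a Frostman-type non-concentration in $V'$ at scale $\delta$, which is the analogue for the present setting of Lemma~\ref{lm:NCproj}. Steps (i) and (ii) reduce to a careful analysis of how Schubert cycles in $\Gr(\R^n,m)$ correspond, under the flag incidence $V\subset V'$, to Schubert cycles in $\Gr(\R^n,qm)$ and in $\Gr(V',m)$ respectively; the latter is particularly delicate because the relevant Schubert cycles in $\Gr(V',m)$ are parametrized by $(qm-m)$-dimensional subspaces of $V'$ rather than of $\R^n$, so the non-concentration of $\mu$ must be transferred through this change of ambient space.
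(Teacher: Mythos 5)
Your plan has a genuine gap at the point you yourself flag as obstacle (iii), and it is not a technicality that an ``analogue of Lemma~\ref{lm:NCproj}'' can be expected to fill. To apply the $(qm,m)$-case of Theorem~\ref{thm:main} to $B=\pi_{V'}(A_1)$ inside a good $V'$, you must verify the Frostman hypothesis \eqref{eq:nonconA} for $B$ \emph{relative to $\Ncov(B)$}, at all scales $\rho\geq\delta$. Nothing in the hypotheses gives this: a ball $x^{(\rho)}\subset V'$ pulls back to a cylinder in $\R^n$, and the non-concentration of $A$ controls only balls, so $\Ncov(B\cap x^{(\rho)})$ is not controlled in terms of $\Ncov(B)\approx\delta^{-\frac{qm}{n}\alpha}$, which is much smaller than $\Ncov(A)$. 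Lemma~\ref{lm:NCproj} does not help here, because its mechanism is specific to \emph{exceptional} directions: the smallness of $\pi_V(A')$ is used both to discard small fibres at negligible cost and to convert concentration of the projection into a $\rho$-dense cylinder, and then many such cylinders are intersected to contradict \eqref{eq:nonconA}. For your good $V'$ the projection is large, fibres over a concentrated part of $B$ may be tiny, and no dense cylinder (hence no contradiction with the non-concentration of $A$) results; so it is not even clear that the needed statement is true, let alone proved. A secondary problem is the composition step: the set $B'_{V'}$ produced by the $(qm,m)$-application depends on $V'$, while Theorem~\ref{thm:main} demands a single subset $A'_0\subset A$; moreover, for a large subset $C\subset A'_0$ you only know $\Ncov(\pi_{V'}(C))\geq\delta^{-\frac{qm}{n}\alpha-\epsilon_1}$, which need not be $\geq\delta^{\epsilon_2}\Ncov(B')$ when $\Ncov(B)$ exceeds $\delta^{-\frac{qm}{n}\alpha}$ by a power of $\delta$, so the lower bound for $\Ncov(\pi_V(\pi_{V'}(C)))$ does not follow as stated. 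These would require extra pigeonholing and size normalizations that your sketch does not supply.

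The paper's proof avoids both issues by never applying the projection theorem to a projected set at this stage. It argues by contradiction on the original set $A$: sample $V_1,\dotsc,V_q$ independently from $\mu$, let $\mu'$ be the law of $V=V_1+\dotsb+V_q$ conditioned on $\dim V=qm$ (Lemma~\ref{lm:sumsOK} gives the non-concentration of $\mu'$ on $\Gr(\R^n,qm)$), and apply the $(n,qm)$-hypothesis to $A$ with $\mu'$. If the $(n,m)$-conclusion failed for $A$, a step-by-step conditional construction produces, with probability $\geq\delta^{O(\epsilon)}$, subspaces with $\dang(V_1,\dotsc,V_q)\geq\delta^{O(\epsilon)}$ and a common subset $A_q\subset A$ all of whose $q$ projections $\pi_{V_j}(A_q)$ are small; Lemma~\ref{lm:tiltCart} then forces $\pi_V(A_q)$ to be small, contradicting the $(n,qm)$-case. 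Thus only the non-concentration of $A$ and of $\mu'$ are ever needed, whereas your route requires a non-concentration statement for projections to \emph{typical} directions, which is precisely the kind of statement the theorem is designed to produce, not to assume. (A minor further remark: your argument also invokes the $(qm,m)$-case, which is legitimate inside the paper's global induction since the divisible case is established first, but it proves a formally different implication from the one stated in Proposition~\ref{pr:nqmrcase}.)
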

The idea is the following. Let $V_1,\dotsc,V_q$ be random $m$-planes distributed independently according to $\mu$. Thanks to the non-concentration property of $\mu$, the sum $V = V_1 + \dotsb + V_q$ is a direct sum in well-spaced position with large probablity. Thus the size of the projection $\pi_V(A)$ is comparable to the product of the sizes of $\pi_{V_i}(A)$, $i \in \ensA{q}$. Applying Theorem~\ref{thm:main} with $n$ and $m' = qm$ to $A$ and the distribution of $V$, we conclude that with large probability, $\pi_V(A)$ has size larger than $\delta^{-\frac{qm}{n}\alpha-q\epsilon}$ and hence for some $i$, $\pi_{V_i}(A)$ has size larger than $\delta^{-\frac{m}{n}\alpha-\epsilon}$.

If $m$ does not divide $n$ and $m > \frac{n}{2}$, write $n = q(n-m) + r$ with $0< r \leq n-m$ and we reduce to the $(n,r)$-case.
\begin{prop}\label{pr:nqn-mrcase}
Let $0<m<n$ be such that $n = q(n - m) +r$ where $q \geq 1$ and $0 < r \leq n-m$. If Theorem~\ref{thm:main} is true for $n$ and $m'= r$ then it is also true for $n$ and $m$.
\end{prop}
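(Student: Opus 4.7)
The plan mirrors that of Proposition~\ref{pr:nqmrcase}, with intersections of $m$-planes replacing sums. Given independent $V_1, \ldots, V_q \sim \mu$ on $\Gr(\R^n, m)$, form $W = V_1 \cap \cdots \cap V_q$; equivalently, $W^\perp = V_1^\perp + \cdots + V_q^\perp$ is a sum of $q$ subspaces of dimension $n-m$. The non-concentration \eqref{eq:nonconmu} ensures that with $\mu^{\otimes q}$-probability close to one these are in well-spaced direct sum, so $\dim W^\perp = q(n-m) = n - r$ and $\dim W = r$.

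First, define $\mu'$ on $\Gr(\R^n, r)$ as the pushforward of $\mu^{\otimes q}$ under the intersection map, restricted to the generic event $\dim W = r$. The required non-concentration $\mu'(\Vang(W', \rho)) \leq \delta^{-\epsilon'}\rho^{\kappa'}$ for the $(n,r)$-case is verified by exploiting the symmetry $\dang(W, W') = \dang(W^\perp, W'^\perp)$: the event $W \in \Vang(W', \rho)$ then becomes the event that $V_1^\perp + \cdots + V_q^\perp$ is within angle $\rho$ of the $r$-dimensional subspace $W'^\perp$, which a Fubini argument iterating \eqref{eq:nonconmu} over the coordinates $V_i$ shows has small probability. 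Invoking the hypothesis, Theorem~\ref{thm:main} for $(n, r)$ then yields a subset $A' \subset A$ for which $\mu'$ charges at most $\delta^{\epsilon_0}$ of the exceptional set of $r$-planes for $A'$.

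Second, deduce $\mu(\EC(A', \epsilon)) \leq \delta^{\epsilon}$. Given $V \in \EC(A', \epsilon)$ with witness $A_V \subset A'$, and independent $V_2, \ldots, V_q \sim \mu$, the intersection $W = V \cap V_2 \cap \cdots \cap V_q$ lies in $V$, so $\pi_W(A_V) = \pi_W(\pi_V(A_V))$. The main obstacle is an exponent gap: the naive bound $\Ncov(\pi_W(A_V)) \leq \Ncov(\pi_V(A_V)) < \delta^{-\frac{m}{n}\alpha - \epsilon}$ is too weak to contradict $\Ncov(\pi_W(A'')) \geq \delta^{-\frac{r}{n}\alpha - \epsilon_0}$ forced by the previous step, since $\frac{m}{n}\alpha > \frac{r}{n}\alpha$. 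Closing this gap requires refining $A_V$ to a large subset $A'' \subset A_V$ sitting over a small piece of $W$---by heavy-fiber pigeonholing on the orthogonal decomposition $V = W \oplus (V \ominus W)$ inside $V$---so that $\Ncov(\pi_W(A''))$ genuinely drops below $\delta^{-\frac{r}{n}\alpha - \epsilon_0}$. Making this precise uses the non-concentration of $\pi_V(A_V)$ inside $V$ (derived from \eqref{eq:nonconA} via a variant of Lemma~\ref{lm:NCproj}) together with the randomness of $W \subset V$ induced by $(V_2, \ldots, V_q) \sim \mu^{\otimes (q-1)}$, ensuring that typical $W$ is in general position with respect to the concentration directions of $\pi_V(A_V)$.
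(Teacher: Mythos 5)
Your first step (the distribution of $W = V_1 \cap \dotsb \cap V_q$, via duality and Lemma~\ref{lm:sumsOK}, and the appeal to the $(n,r)$-case) matches the paper. The gap is in the second step, and it is the heart of the matter. From the smallness of the \emph{single} projection $\Ncov(\pi_V(A_V)) < \delta^{-\frac{m}{n}\alpha-\epsilon}$ one cannot conclude, for generic $W \subset V$ of dimension $r$, that some large subset of $A_V$ projects into $\delta^{-\frac{r}{n}\alpha - \epsilon_0}$ balls of $W$: if $\pi_V(A_V)$ is a set of dimension $\frac{m}{n}\alpha$ spread uniformly in $V$, its projection to a typical $r$-plane $W\subset V$ has covering number about $\delta^{-\min(r,\frac{m}{n}\alpha)}$, which exceeds $\delta^{-\frac{r}{n}\alpha}$, and no large subset does better unless the fibers of $\pi_W$ are huge. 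Your proposed fix, heavy-fiber pigeonholing over $W$, cannot produce a witness: a subset sitting over a single heavy fiber of $\pi_W$ is a (neighborhood of a) codimension-$r$ slice, so its covering number is at most the maximal fiber size, and it can only be comparable to $\Ncov(A)$ (as required to invoke the $(n,r)$-case) when $A$ contains an enormous slice --- which is exactly the obstruction, not its resolution, and it is not touched by non-concentration of $\pi_V(A_V)$ (besides, Lemma~\ref{lm:NCproj} is only established when $m$ divides $n$).

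What the paper actually does at this point is different in two essential ways. First, it does not use one exceptional $V$ and $q-1$ random planes: arguing by contradiction (so that $\mu(\EC(A'))>\delta^{\epsilon}$ for \emph{every} subset $A'$, which your direct two-step structure does not provide), it builds a nested chain $A \supset A_1 \supset \dotsb \supset A_q$ and planes $V_1,\dotsc,V_q$ with $V_j \in \EC(A_{j-1})$ in general position, so that \emph{all} $q$ projections $\pi_{V_j}(A_q)$ are small simultaneously. Second, the passage from ``all $\pi_{V_j}(A_q)$ small'' to ``some large $A' \subset A_q$ has $\Ncov(\pi_V(A')) \leq \delta^{-\frac{r}{n}\alpha - O(\epsilon)}$'' is a genuinely combinatorial statement, Proposition~\ref{pr:SliceEndelta}, proved via the energy refinement (Lemma~\ref{lm:EnergyProj}) of the Bollob\'as--Thomason uniform cover theorem; it needs the extra hypotheses $\Ncov(A_q) \leq \delta^{-\alpha-O(\epsilon)}$ and that no fiber of $\pi_V$ meets $A$ in more than $\delta^{-\frac{n-r}{n}\alpha-O(\epsilon)}$ balls. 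Both of these hypotheses must be secured beforehand: the upper bound on $\Ncov(A)$ and the large-slice case are disposed of separately using Proposition~\ref{pr:Nincr} (the projection estimate without non-concentration) applied, in the slice case, to $B = \pi_W\bigl(A\cap(x+W^{(\delta)})\bigr)$ together with Lemmas~\ref{lm:piWVok} and~\ref{lm:piVpB}. None of this machinery --- the iterated choice of exceptional $V_j$'s, the energy/uniform-cover inequality, and the separate treatment of large slices --- appears in your proposal, and without it the exponent gap you correctly identified remains open.
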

This last reduction is the trickiest one. We are in a dual situation to the previous one. Again let $V_1,\dotsc,V_q$ be random $m$-planes distributed independently according to $\mu$. This time we consider the intersection instead of the sum of these subspaces. With large probability, the intersection $V = V_1 \cap \dotsb \cap V_q$ has dimension $r$. Thus, we can apply Theorem~\ref{thm:main} with $n$ and $m' = r$ to $\pi_V(A)$. Then the main task is to relate the size of $\pi_V(A)$ to those of $\pi_{V_i}(A)$. We would like to say that $\pi_V(A)$ being large implies one of the $\pi_{V_i}(A)$ must be large as well. However, this is not true in general. It becomes true only if we know that no fiber of $\pi_V$ has large intersection with $A$ (larger than $\delta^{-\frac{n-r}{n}\alpha-\epsilon}$). This relation is proved in Proposition~\ref{pr:SliceEndelta} using a refinement (Lemma~\ref{lm:EnergyProj}) of a combinatorial projection theorem due to Bollob\'{a}s and Thomason~\cite{BollobasThomason}. It remains to treat the case where there is a fiber of $\pi_V$ having large intersection with $A$ or, in other words, the case where $A$ has a $(n-r)$\dash{}dimensional slice with covering number $\geq \delta^{-\frac{n-r}{n}\alpha-\epsilon}$. The idea is to apply a projection theorem to this slice. Since it has a very large size, we achieve this even without a non-concentration property (Proposition~\ref{pr:Nincr}).

Now let us see how to prove Theorem~\ref{thm:main} by putting these propositions together.
\begin{proof}[Proof of Theorem~\ref{thm:main}]
Propositions~\ref{pr:n2mcase} and \ref{pr:nqmcase} imply the theorem for all pairs $(n,m)$ such that $m$ divides $n$. Consider the following order on pairs of positive integers of the form $(n,m)$, $0 < m <n$. We say $(n,m) \prec (n',m')$ if $(n, \min(m,n-m),m)$ is smaller than $(n',\min(m',n'-m'),m')$ for the lexicographical order.

If the theorem were false then let $(n,m)$ be a $\prec$-minimal pair for which the theorem fails. We know that $m$ does not divide $n$. If $m < \frac{n}{2}$ then write $n = qm + r$ with $0< r < m$. We have $(n,qm) \prec (n,m)$. Hence Proposition~\ref{pr:nqmrcase} contradicts the minimality of $(n,m)$. Otherwise $m > \frac{n}{2}$, then write $n = q(n-m) + r$ with $0< r \leq n-m$. We have $(n,r) \prec (n,m)$ and then Proposition~\ref{pr:nqn-mrcase} contradicts the minimality of $(n,m)$.
\end{proof}

\subsection*{Acknowledgements}
This work is part of my PhD thesis conducted under the supervision of Emmanuel Breuillard and Péter Varj\'u. I am greatly indebted to my advisors for their help. I am also grateful to Nicolas de Saxcé for stimulating conversations and to Julien Barral, Yichao Huang and Elon Lindenstrauss for helpful comments.

\section{Preliminaries}
In this section we introduce notation that will be used throughout the paper, then provide some elementary estimates about the Grassmannian and finally recall some tools from additive combinatorics.
\subsection{Notation and basic definitions}
Throughout this paper, $m$ and $n$ will be positive integers that denote dimensions. For any finite set $A$, we denote by $\abs{A}$ its cardinality. We endow $\R^n$ with its usual Euclidean structure. We denote by $\grO(n)$ the orthogonal group on $\R^n$, by $\lambda$ the Lebesgue measure on $\R^n$ and by $\Gr(\R^n,m)$ the Grassmannian of $m$\dash{}dimensional subspaces of $\R^n$. For a linear subspace $V \subset \R^n$, denote by $\pi_V$ the orthogonal projection onto $V$. Recall that there is a unique Euclidean structure on each of the exterior powers $\bigwedge^m \R^n$ for which the standard basis is a orthonormal basis.

Let $\delta > 0$ be a real number that we will refer to as the scale. For a point $x \in \R^n$, we write $\Ball(x,\delta)$ or $x^{(\delta)}$ to denote the closed ball of radius $\delta$ centered at $x$. Let $A$ be a bounded subset of $\R^n$. We denote by $A^{(\delta)}$ the closed $\delta$-neighborhood of $A$. 

When we observe a set $A$ at scale $\delta$, there are several quantities describing the size of $A$. They differ one from another at most by a constant factor depending only on $n$. The first one is the external covering number by $\delta$-balls (also known as the metric entropy), denoted by $\Ncov(A)$. It is defined as the minimal number of points $x_1, \dotsc,x_N$ such that the balls $x_1^{(\delta)}, \dotsc, x_N^{(\delta)}$ cover $A$. Let $\tilde A$ be a maximal $2\delta$-separated subset of $A$. Its cardinality also reflects the size of $A$ at scale $\delta$. We can also consider the Lebesgue measure $\lambda(A^{(\delta)})$ of the $\delta$-neighborhood of $A$. Here is a relation between these quantities.
\begin{lemm}\label{lm:coverLeb}
Let $\delta > 0$ and let $A$ be a bounded subset of $\R^n$. Let $\tilde A$ be a maximal $2\delta$\dash{}separated subset of $A$. Then
\begin{equation}\label{eq:coverLeb}
\Ncov[2\delta](A) \leq \abs{\tilde A} \leq \Ncov(A) \leq \Ncov[1](\Ball(0,2))\, \Ncov[2\delta](A),
\end{equation}
and
\begin{equation*}
\abs{\tilde A} \leq \frac{\lambda(A^{(\delta)})}{\lambda(\Ball(0,\delta))} \leq 2^n \Ncov(A).
\end{equation*}
As a consequence, $\Ncov(A^{(\delta)}) \ll_n \Ncov(A)$.
\end{lemm}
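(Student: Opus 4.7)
The lemma is elementary and I would prove the inequalities in the order they appear, exploiting two guiding principles: a $2\delta$-separated set is automatically a $2\delta$-net by maximality, and the balls of radius $\delta$ around a $2\delta$-separated set are pairwise disjoint. The first principle handles the ``lower'' inequalities in the chain, the second handles the volume-based ones.

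For the chain \eqref{eq:coverLeb}, I would argue as follows. The first inequality $\Ncov[2\delta](A) \leq |\tilde A|$ follows because maximality of $\tilde A$ forces every point of $A$ to lie within distance $2\delta$ of some point of $\tilde A$, so the balls $\Ball(x, 2\delta)$, $x \in \tilde A$, cover $A$. The middle inequality $|\tilde A| \leq \Ncov(A)$ is a pigeonhole argument: if $\Ball(y_1,\delta),\dots,\Ball(y_N,\delta)$ covers $A$, then two distinct points of $\tilde A$ lying in the same ball would be at distance $\leq 2\delta$, contradicting $2\delta$-separation. For the final inequality $\Ncov(A) \leq \Ncov[1](\Ball(0,2)) \cdot \Ncov[2\delta](A)$, I would replace each ball of radius $2\delta$ in an optimal $2\delta$-cover of $A$ by a collection of balls of radius $\delta$; after rescaling, covering a $2\delta$-ball by $\delta$-balls is the same thing as covering $\Ball(0,2)$ by unit balls, costing the factor $\Ncov[1](\Ball(0,2))$.

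For the volume inequality, $2\delta$-separation of $\tilde A$ means the open balls $\Ball(x,\delta)$, $x \in \tilde A$, are pairwise disjoint; each sits inside $A^{(\delta)}$, so summing volumes gives the first half $|\tilde A|\cdot\lambda(\Ball(0,\delta)) \leq \lambda(A^{(\delta)})$. For the second half, take an optimal $\delta$-cover of $A$ by balls $\Ball(y_i,\delta)$; then $A^{(\delta)} \subset \bigcup_i \Ball(y_i,2\delta)$, and since $\lambda(\Ball(0,2\delta)) = 2^n \lambda(\Ball(0,\delta))$, summing gives $\lambda(A^{(\delta)}) \leq 2^n \Ncov(A)\, \lambda(\Ball(0,\delta))$.

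Finally, the consequence $\Ncov(A^{(\delta)}) \ll_n \Ncov(A)$ follows by combining the inequalities already proved: if $\Ball(y_i,\delta)$, $i = 1,\dots,N$, covers $A$ with $N = \Ncov(A)$, then $\Ball(y_i,2\delta)$ covers $A^{(\delta)}$, so $\Ncov[2\delta](A^{(\delta)}) \leq \Ncov(A)$, and then the rightmost inequality of \eqref{eq:coverLeb} applied to $A^{(\delta)}$ yields $\Ncov(A^{(\delta)}) \leq \Ncov[1](\Ball(0,2))\,\Ncov(A)$, with the implicit constant depending only on $n$. There is no real obstacle here; the only care needed is to track whether $2\delta$-separation is strict or weak and to keep the scaling factors between $\delta$-, $2\delta$- and unit-scale coverings consistent, but none of this is deep.
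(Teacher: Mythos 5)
Your argument is correct and is exactly the standard proof that the paper leaves to the reader (the lemma is stated there without proof): maximality makes $\tilde A$ a $2\delta$-net, separation makes the $\delta$-balls around $\tilde A$ disjoint, and the remaining bounds come from rescaling covers and doubling radii. The one point you rightly flag should be settled explicitly: for the middle inequality $\abs{\tilde A} \leq \Ncov(A)$ one needs the strict convention (pairwise distances $> 2\delta$), since with weak separation two points at distance exactly $2\delta$ can lie in a single closed $\delta$-ball; with that convention fixed, every step goes through as you wrote it.
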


It is sometimes useful to change scale. Clearly, $\Ncov(A)$ is nonincreasing in $\delta$. Conversely, for all $\delta' \geq \delta$, we have
\begin{equation}\label{eq:deltaPrim}
\Ncov(A) \ll_n \Bigl(\frac{\delta'}{\delta}\Bigr)^n\Ncov[\delta'](A).
\end{equation}

If $f \colon \R^m \to \R^n$ is a linear map with $\norm{f} \leq K$ where $K \geq 1$, or more generally if $f : A \to \R^n$ is $K$-Lipschitz, we have
\begin{equation}\label{eq:sizefA}
\Ncov(fA) \ll_n K^n\Ncov(A).
\end{equation}

When we want intersect two discretized sets $A,B \subset \R^n$, we shall take the $\delta$-neighborhood of at least one of the sets before intersecting. Note that $\Ncov(A^{(\delta)}\cap B^{(\delta)})$ can be large while at the same time $A\cap B$ is empty. The same goes with $A^{(2\delta)}\cap B^{(2\delta)}$ and $A^{(\delta)}\cap B^{(\delta)}$. However, we have
\begin{equation}\label{eq:deltacap}
\Ncov(A^{(2\delta)}\cap B) \ll_n \Ncov(A^{(\delta)}\cap B^{(\delta)}) \ll_n \Ncov(A\cap B^{(2\delta)}).
\end{equation}
\subsection{Distance on the Grassmannian}
For linear subspaces $V,W$ of $\R^n$, we define
\[\dang(V,W) = \norm{v_1 \wedge \dotsb \wedge v_r \wedge w_1 \wedge \dotsb \wedge w_s}\]
where $(v_1,\dotsc, v_r)$ is an orthonormal basis of $V$ and $(w_1,\dotsc, w_s)$ an orthonormal basis of $W$. It is a distance when restricted to the projective space $\Gr(\R^n,1)$ but only in this case. For example, $\dang(V,W) = 0$ if and only if $V$ and $W$ have nontrivial intersection and $\dang(V,W) = 1$ if and only if they are orthogonal to each other. For other cases, $\dang(V,W)$ falls between $0$ and $1$.

If $v_1,\dotsc,v_r$ are vectors and $\mathbf{w} = w_1 \wedge \dotsb \wedge w_s$ the wedge product of an orthonormal basis of $W$, then
\begin{equation}\label{eq:wedgePi}
\norm{v_1 \wedge \dotsb \wedge v_r \wedge \mathbf{w}} = \norm{\pi_{W^\perp}(v_1) \wedge \dotsb \wedge \pi_{W^\perp}(v_r)}.
\end{equation}
In particular, if $(v_1,\dotsc,v_r)$ is an orthonormal basis of $V$, then
\begin{equation}\label{eq:wedgePi2}
\dang(V,W) = \norm{\pi_{W^\perp}(v_1) \wedge \dotsb \wedge \pi_{W^\perp}(v_r)}.
\end{equation}

If $f \colon V \to W$ is a linear map between euclidean spaces of same dimension, then the determinant of its matrix expressed in orthonormal bases up to a sign does not depend on the choice of the bases. Moreover, we have
\[\abs{\det(f)} = \norm{f(v_1)\wedge \dotsb \wedge f(v_r)}\] 
where $(v_1,\dotsc,v_r)$ is an orthonormal basis of $V$. Together with \eqref{eq:wedgePi2} this gives yet another definition of $\dang(V,W)$ if $\dim(V) + \dim(W) = n$,
\begin{equation}\label{eq:det=dang}
\dang(V,W) = \abs{\det(\pi_{W^\perp \mid V})},
\end{equation}
where $\pi_{W^\perp \mid V} \colon V \to W^\perp$ denotes the restriction of $\pi_{W^\perp}$ to $V$.

The natural action of the orthogonal group $\grO(n)$ on the Grassmannian preserves $\dang$, i.e.
\[\forall g \in \grO(n),\quad \dang(gV,gW) = \dang(V,W).\]
Consequently if $\dim V + \dim W = n$ then
\begin{equation}\label{eq:perpperp}
\dang(V^\perp,W^\perp) = \dang(V, W),
\end{equation}
because in this case we can always send $V$ to $W^\perp$ (hence $W$ to $V^\perp$) by an element of $\grO(n)$.

Moreover, when we have several subspaces, $V_1,V_2, \dotsc , V_q$ of $\R^n$, we define
\[\dang(V_1, \dotsc , V_q) = \norm{\mathbf{v}_1 \wedge \dotsb \wedge \mathbf{v}_q}\]
where for each $i = 1, \dotsc , q$, $\mathbf{v}_i$ is the wedge product of the elements of an orthonormal basis of $V_i$. For example, if $x_1, \dotsc, x_n \in \R^n$ are unit vectors, then
\[\dang(\R x_1, \dotsc ,\R x_n) = \abs{\det(x_1, \dotsc x_n)}.\]

Obviously, $\dang(V_1,\dotsc,V_q)$ is symmetric in the variables $V_1,\dotsc,V_q$. Below are some other elementary properties of $\dang$.

\begin{lemm}\label{lm:dangUVW}
If $U,V,W$ are linear subspaces of $\R^n$, then
\begin{equation}\label{eq:dangUVW}
\dang(U,V,W) = \dang(U+V,W) \dang(U,V).
\end{equation}
Consequently, if $V_1, \dotsc , V_q$ are also linear subspaces, then
\begin{equation}\label{eq:dangExpand}
\dang(V_1, \dotsc , V_q) = \dang(V_2, V_1) \dang(V_3, V_1 + V_2) \dotsm \dang(V_q,V_1 + \dotsb + V_{q-1});
\end{equation}
\begin{equation}\label{eq:dangExpand2}
\dang(V_1 + \dotsb + V_q,W) \geq \dang(V_1, W) \dang(V_2, V_1 + W) \dotsm \dang(V_q,V_1 + \dotsb + V_{q-1} + W).
\end{equation}
\end{lemm}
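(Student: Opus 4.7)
The plan is to reduce everything to the identity \eqref{eq:dangUVW}, which I would prove directly from the definition of $\dang$ in terms of wedge products. Fix orthonormal bases $(u_1,\dotsc,u_p)$, $(v_1,\dotsc,v_q)$, $(w_1,\dotsc,w_r)$ of $U$, $V$, $W$ respectively. The key observation is that the multivector $\mathbf{x} = u_1 \wedge \dotsb \wedge u_p \wedge v_1 \wedge \dotsb \wedge v_q$ is a decomposable $(p+q)$-vector with $\norm{\mathbf{x}} = \dang(U,V)$ (this is just the definition). If $\dang(U,V) = 0$, then $U \cap V \neq \{0\}$ forces $\dang(U,V,W) = 0$ as well, and \eqref{eq:dangUVW} holds trivially. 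Otherwise $\dim(U+V) = p+q$, and if $(e_1,\dotsc,e_{p+q})$ is an orthonormal basis of $U+V$, then $\mathbf{x} = c\,(e_1 \wedge \dotsb \wedge e_{p+q})$ with $|c| = \norm{\mathbf{x}} = \dang(U,V)$. Wedging both sides with $w_1 \wedge \dotsb \wedge w_r$ and taking norms yields $\dang(U,V,W) = \dang(U,V)\,\dang(U+V,W)$.

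Next, \eqref{eq:dangExpand} follows by a straightforward induction on $q$: applying \eqref{eq:dangUVW} to $U = V_1 + \dotsb + V_{q-1}$, $V = V_q$, and $W$ replaced by nothing, or equivalently peeling off the last factor using the identity $\dang(V_1,\dotsc,V_q) = \dang(V_1,\dotsc,V_{q-1})\,\dang(V_1+\dotsb+V_{q-1},V_q)$.

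For the inequality \eqref{eq:dangExpand2}, I would exploit the symmetry of $\dang$ in its arguments by computing $\dang(V_1,\dotsc,V_q,W)$ in two different ways via \eqref{eq:dangExpand}. Taking the ordering that puts $W$ last gives $\dang(V_1,\dotsc,V_q,W) = \dang(V_1,\dotsc,V_q)\,\dang(V_1+\dotsb+V_q,W)$, while the ordering $W, V_1,\dotsc,V_q$ gives $\dang(V_1,\dotsc,V_q,W) = \dang(V_1,W)\,\dang(V_2,V_1+W)\dotsm\dang(V_q,V_1+\dotsb+V_{q-1}+W)$. Equating these and dividing yields an exact formula for $\dang(V_1+\dotsb+V_q,W)$ with the denominator $\dang(V_1,\dotsc,V_q)$. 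Since this denominator is at most $1$ (by Hadamard's inequality, as $\dang(V_1,\dotsc,V_q)$ is the norm of a wedge of unit decomposable multivectors), dropping it gives the desired lower bound.

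The only step requiring genuine care is the first one, the proportionality $\mathbf{x} = c\,(e_1\wedge\dotsb\wedge e_{p+q})$: one has to verify that the decomposable wedge of two orthonormal systems (which are not jointly orthonormal) still represents the subspace $U+V$ up to the correct scalar. Everything else is a symbolic manipulation once \eqref{eq:dangUVW} is in hand.
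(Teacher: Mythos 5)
Your proposal is correct and follows essentially the same route as the paper: the normalization fact that $\mathbf{u}\wedge\mathbf{v}/\norm{\mathbf{u}\wedge\mathbf{v}}$ is (up to sign) the wedge of an orthonormal basis of $U+V$ gives \eqref{eq:dangUVW}, induction gives \eqref{eq:dangExpand}, and computing $\dang(V_1,\dotsc,V_q,W)$ with two orderings plus $\dang(V_1,\dotsc,V_q)\leq 1$ gives \eqref{eq:dangExpand2}. Only cosmetic remark: rather than ``dividing'' by $\dang(V_1,\dotsc,V_q)$ (which could vanish), simply observe that the right-hand side of \eqref{eq:dangExpand2} equals $\dang(V_1,\dotsc,V_q)\,\dang(V_1+\dotsb+V_q,W)\leq\dang(V_1+\dotsb+V_q,W)$, which is exactly the paper's phrasing.
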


\begin{proof}
If the sum $U+V$ is not a direct sum, then $\dang(U,V,W) = 0$ and $\dang(U,V) = 0$. Otherwise, let $\mathbf{u}$ and $\mathbf{v}$ be wedge products of orthonormal bases of $U$ and $V$ respectively. Then $\mathbf{u}\wedge \mathbf{v}/\norm{\mathbf{u}\wedge \mathbf{v}}$ is the wedge product of an orthonormal basis of $U + V$. Then \eqref{eq:dangUVW} follows immediately from the definition.

The estimates \eqref{eq:dangExpand} can be obtained by a simple induction. The inequality \eqref{eq:dangExpand2} follows from \eqref{eq:dangExpand} since, by \eqref{eq:dangExpand}, the right hand side of \eqref{eq:dangExpand2} is equal to $\dang(V_1,\dotsc,V_q,W)$ which, by \eqref{eq:dangExpand} again, is equal to $\dang(V_1, \dotsc , V_q) \dang(V_1 + \dotsb + V_q,W)$. 
\end{proof}

\begin{lemm}\label{lm:Cylinders}
Let $q \geq 2$. Let $V_1,\dotsc,V_q$ be linear subspaces of $\R^n$. If $z \in V_1 + \dotsb + V_q$ then
\begin{equation}
\norm{z} \dang(V_1, \dotsc, V_q) \leq \norm{\pi_{V_1}(z)} + \norm{\pi_{V_2}(z)} + \dotsb + \norm{\pi_{V_q}(z)}
\end{equation}
\end{lemm}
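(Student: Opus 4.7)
The plan is to reduce the inequality to a statement about the direct-sum decomposition. Suppose the sum $V_1 + \dotsb + V_q$ is direct (otherwise $\dang(V_1,\dotsc,V_q) = 0$ and the inequality is trivial), and write $z = v_1 + \dotsb + v_q$ uniquely with $v_i \in V_i$. I will establish as a key sublemma that
\[
\norm{v_i}\,\dang(V_1,\dotsc,V_q) \leq \norm{z} \quad \text{for each } i = 1, \dotsc, q,
\]
and then deduce the lemma by a short duality argument.

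To prove the sublemma, fix $i$ and set $V_{-i} = \sum_{j \neq i} V_j$. Using that $\dang$ is symmetric in its arguments, one can apply \eqref{eq:dangExpand} in an order that places $V_i$ last, obtaining an identity of the form $\dang(V_1,\dotsc,V_q) = \Delta \cdot \dang(V_i, V_{-i})$, where $\Delta$ is the value of $\dang$ on the remaining $q-1$ subspaces. Since every such $\dang$ lies in $[0,1]$, we have $\Delta \leq 1$ and hence $\dang(V_1,\dotsc,V_q) \leq \dang(V_i, V_{-i})$. It therefore suffices to prove $\norm{v_i}\,\dang(V_i, V_{-i}) \leq \norm{z}$. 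Since $z - v_i \in V_{-i}$, we have $\pi_{V_{-i}^\perp}(z) = \pi_{V_{-i}^\perp}(v_i)$, so $\norm{\pi_{V_{-i}^\perp}(v_i)} \leq \norm{z}$. Now consider the restriction $T = \pi_{V_{-i}^\perp}\bigl|_{V_i}\colon V_i \to V_{-i}^\perp$. As a restriction of an orthogonal projection, $T$ is a contraction, so all of its singular values lie in $[0,1]$; and by \eqref{eq:wedgePi2}, their product equals $\dang(V_i, V_{-i})$. In particular, the smallest singular value of $T$ is at least $\dang(V_i, V_{-i})$, which gives $\norm{T(v_i)} \geq \dang(V_i, V_{-i})\,\norm{v_i}$ and closes the estimate.

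To deduce the lemma, assume $D := \dang(V_1,\dotsc,V_q) > 0$ and $z \neq 0$ (the other cases being trivial), and set $u_i = (D/\norm{z})\,v_i \in V_i$. The sublemma gives $\norm{u_i} \leq 1$, so Cauchy--Schwarz yields $\norm{\pi_{V_i}(z)} \geq \langle \pi_{V_i}(z), u_i\rangle = \langle z, u_i\rangle$. Summing over $i$,
\[
\sum_{i=1}^{q} \norm{\pi_{V_i}(z)} \geq \Bigl\langle z, \sum_{i=1}^{q} u_i \Bigr\rangle = \Bigl\langle z,\, \frac{D}{\norm{z}}\, z \Bigr\rangle = D\,\norm{z},
\]
as desired.

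The main obstacle is the singular-value step in the sublemma: one has to combine the identity from \eqref{eq:wedgePi2} (which says $\dang(V_i, V_{-i})$ equals the product of singular values of $\pi_{V_{-i}^\perp}|_{V_i}$) with the contraction property of the projection to conclude that the \emph{smallest} singular value already dominates the whole product $\dang(V_i, V_{-i})$. Once this is in hand, the factorization of $\dang(V_1,\dotsc,V_q)$ via \eqref{eq:dangExpand} and the duality step at the end are routine manipulations.
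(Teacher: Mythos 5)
Your proof is correct, but it takes a genuinely different route from the paper's. The paper argues by induction on $q$: for $q=2$ it assumes $\R^n = V_1 \oplus V_2$, decomposes $z = z_1 + z_2$ with $z_1 \in V_1^\perp$, $z_2 \in V_2^\perp$ (the \emph{dual} decomposition), bounds $\norm{\pi_{V_1}(z)} = \norm{\pi_{V_1}(z_2)} \geq \norm{z_2}\dang(V_1,V_2)$ via \eqref{eq:wedgePi2} and \eqref{eq:perpperp}, and concludes with the triangle inequality; the inductive step groups $V_q + V_{q+1}$ and uses the factorization $\dang(V_1,\dotsc,V_{q+1}) = \dang(V_1,\dotsc,V_{q-1},V_q+V_{q+1})\dang(V_q,V_{q+1})$. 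You instead work directly with the \emph{primal} decomposition $z = v_1 + \dotsb + v_q$, $v_i \in V_i$, prove the coordinate stability bound $\norm{v_i}\,\dang(V_1,\dotsc,V_q) \leq \norm{z}$ by comparing the smallest singular value of $\pi_{V_{-i}^\perp}|_{V_i}$ with the product of its singular values (which equals $\dang(V_i,V_{-i})$ by \eqref{eq:wedgePi2}, all factors lying in $[0,1]$ since the map is a contraction), and then finish with a clean self-adjointness/Cauchy--Schwarz summation $\sum_i \norm{\pi_{V_i}(z)} \geq \langle z, (D/\norm{z})z\rangle = D\norm{z}$. All steps check out: the reduction to the direct-sum case, the factorization $\dang(V_1,\dotsc,V_q) = \dang(\{V_j\}_{j\neq i})\,\dang(V_i,V_{-i}) \leq \dang(V_i,V_{-i})$ via symmetry and \eqref{eq:dangExpand}, and the identity $\pi_{V_{-i}^\perp}(z) = \pi_{V_{-i}^\perp}(v_i)$ are all valid. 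What your approach buys is a non-inductive argument and, as a by-product, the quantitative statement that each coordinate $v_i$ of the direct-sum decomposition satisfies $\norm{v_i} \leq \dang(V_1,\dotsc,V_q)^{-1}\norm{z}$, which is of independent use (it is essentially the geometric content behind Lemma~\ref{lm:tiltCart} as well); the paper's induction is shorter per step and needs only two-subspace geometry plus the multiplicativity in Lemma~\ref{lm:dangUVW}.
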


\begin{proof}
We will proceed by induction. Let $q = 2$. Obviously, there is nothing to prove if $V_1 + V_2$ is not a direct sum. Moreover, without loss of generality, we can assume that $\R^n = V_1 + V_2$. Hence also $\R^n = V_1^\perp + V_2^\perp$. Write $z = z_1 + z_2$ with $z_1 \in V_1^\perp$ and $z_2 \in V_2^\perp$. Then by \eqref{eq:wedgePi2},
\[\norm{\pi_{V_1}(z)} = \norm{\pi_{V_1}(z_2)} = \norm{z_2} \dang(V_1^\perp,\R z_2) \geq \norm{z_2} \dang(V_1^\perp,V_2^\perp) = \norm{z_2} \dang(V_1,V_2).\]
Similarly, $\norm{\pi_{V_2}(z)} \geq \norm{z_1} \dang(V_1,V_2)$. We get the lemma for $q=2$ using the triangular inequality.

Now, suppose the lemma is true for some $q \geq 2$. Let us show the lemma for $q + 1$. Let $V'_q = V_q + V_{q+1}$ and $z' = \pi_{V'_q}(z)$. The induction hypothesis applied to $z$ and $(V_1,\dotsc,V_{q-1},V'_q)$ gives
\[\norm{z} \dang(V_1, \dotsc,V_{q-1},V_q + V_{q+1}) \leq \norm{\pi_{V_1}(z)} + \dotsb + \norm{\pi_{V_{q-1}}(z)} +\norm{z'}.\]
The $q=2$ case applied to $z'$ and $(V_q,V_{q+1})$ gives
\[\norm{z'} \dang(V_q,V_{q+1}) \leq \norm{\pi_{V_q}(z')} + \norm{\pi_{V_{q+1}}(z')} = \norm{\pi_{V_q}(z)} + \norm{\pi_{V_{q+1}}(z)}.\]
Recall that $\dang(V_1, \dotsc, V_{q+1}) = \dang(V_1, \dotsc,V_{q-1},V_q + V_{q+1})\dang(V_q,V_{q+1})$. We obtain the desired estimate by multiplying the first inequality by $\dang(V_q,V_{q+1})$ and combining it with the second.
\end{proof}

\begin{lemm}\label{lm:tiltCart}
If $\R^n$ is a direct sum of $V_1,\dotsc,V_q$ then for any bounded subset $A \subset \R^n$,
\begin{equation}\label{eq:tiltCart}
\Ncov(A) \ll_n \dang(V_1,\dotsc,V_q)^{-n} \prod_{i=1}^q \Ncov(\pi_{V_i}(A)).
\end{equation}
\end{lemm}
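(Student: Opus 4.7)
My plan is to exploit Lemma~\ref{lm:Cylinders} to show that the combined orthogonal projection map $\Phi \colon \R^n \to V_1\oplus\dotsb\oplus V_q$ defined by
\[\Phi(z) = (\pi_{V_1}(z), \dotsc, \pi_{V_q}(z))\]
is bi-Lipschitz onto its image, with inverse Lipschitz constant essentially $\dang(V_1,\dotsc,V_q)^{-1}$. Once this is established, the bound follows by covering each $\pi_{V_i}(A)$ individually and pulling the product cover back through $\Phi$.

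Concretely, for each $i \in \{1,\dotsc,q\}$, pick a minimal covering of $\pi_{V_i}(A)$ by closed $\delta$-balls centered at points $x^{(i)}_1,\dotsc,x^{(i)}_{N_i}$ with $N_i = \Ncov(\pi_{V_i}(A))$. For every multi-index $J = (j_1,\dotsc,j_q)$ with $1 \leq j_i \leq N_i$, define
\[A_J = \{z \in A \mid \norm{\pi_{V_i}(z) - x^{(i)}_{j_i}} \leq \delta \text{ for all } i\}.\]
By construction $A = \bigcup_J A_J$, so it suffices to bound $\Ncov(A_J)$ uniformly in $J$. For any two points $z,z' \in A_J$, the vector $z-z'$ lies in $V_1 + \dotsb + V_q = \R^n$ and $\norm{\pi_{V_i}(z)-\pi_{V_i}(z')} \leq 2\delta$, so Lemma~\ref{lm:Cylinders} applied to $z-z'$ yields
\[\norm{z-z'}\,\dang(V_1,\dotsc,V_q) \leq \sum_{i=1}^q \norm{\pi_{V_i}(z) - \pi_{V_i}(z')} \leq 2q\delta.\]
Hence $A_J$ has diameter at most $2q\delta/\dang(V_1,\dotsc,V_q)$.

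A set of diameter $O(\delta/\dang)$ in $\R^n$ can be covered by $O_n(\dang^{-n})$ closed balls of radius $\delta$ (a standard volume-packing estimate, or \eqref{eq:deltaPrim} with $\delta' = 2q\delta/\dang$); note that $q \leq n$ since the $V_i$ sum directly to $\R^n$, so the constant depends only on $n$. Therefore $\Ncov(A_J) \ll_n \dang(V_1,\dotsc,V_q)^{-n}$ for each $J$, and summing over the $N_1 \dotsm N_q$ values of $J$ gives
\[\Ncov(A) \leq \sum_J \Ncov(A_J) \ll_n \dang(V_1,\dotsc,V_q)^{-n}\prod_{i=1}^q \Ncov(\pi_{V_i}(A)),\]
as required. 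There is no serious obstacle here: Lemma~\ref{lm:Cylinders} supplies exactly the lower bound on the distortion of $\Phi$ that the inequality demands, and the rest is a routine box-counting manipulation.
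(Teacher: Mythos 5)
Your proof is correct and follows essentially the same route as the paper: cover each $\pi_{V_i}(A)$ by $\delta$-balls, use Lemma~\ref{lm:Cylinders} to show each intersection of cylinders has diameter $O(\delta/\dang(V_1,\dotsc,V_q))$, and conclude by a scale-change/box-counting estimate. The only cosmetic difference is that the paper covers each cylinder intersection by a single ball of radius $q\delta/\dang(V_1,\dotsc,V_q)$ centered at the unique common preimage and then invokes \eqref{eq:deltaPrim}, whereas you bound the diameter of each piece $A_J$ directly; these are equivalent.
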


\begin{proof}
Suppose for each $i \in \ensA{q}$, $\pi_{V_i}(A)$ is covered by the balls $x_i^{(\delta)}$, $x_i \in X_i \subset V_i$. For each $(x_i)_i \in X_1 \times \dotsb \times X_q$, there is a unique $x \in \R^n$ such that $\forall i,\; \pi_{V_i}(x) = x_i$. By Lemma~\ref{lm:Cylinders}, we have
\[\pi_{V_1}^{-1}(x_1^{(\delta)}) \cap \dotsb \cap \pi_{V_q}^{-1}(x_q^{(\delta)}) \subset x^{(\delta')},\]
where $\delta'= \inv{\dang(V_1,\dotsc,V_q)}q\delta$. So $A$ is covered by the balls centered at such $x$. Hence $\Ncov[\delta'](A) \leq \abs{X_1} \dotsm \abs{X_q}$. We then conclude by using the scale change estimate \eqref{eq:deltaPrim}.
\end{proof}

\begin{lemm}\label{lm:V2WandU}
Let $V,W,U$ be linear subspaces of $\R^n$, with $U \subset W$. We have 
\begin{equation}\label{eq:V2WandU}
\dang(V,U+W^\perp) = \dang(V,W^\perp)\dang(\pi_W(V),U).
\end{equation}
\end{lemm}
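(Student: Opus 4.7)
The plan is to reduce \eqref{eq:V2WandU} to repeated applications of Lemma~\ref{lm:dangUVW}, exploiting the following elementary observation: any subspace contained in $W$ is orthogonal to $W^\perp$ with trivial intersection, and so pairs trivially with $W^\perp$ under $\dang$. In particular $\dang(U, W^\perp) = \dang(\pi_W(V), W^\perp) = \dang(U + \pi_W(V), W^\perp) = 1$, which will let me freely insert or remove $W^\perp$ as an additional argument.

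First I would handle the degenerate case $\dang(V, W^\perp) = 0$. In that situation $V \cap W^\perp$ contains a nonzero vector, which also lies in $V \cap (U + W^\perp)$ because $W^\perp \subset U + W^\perp$; hence $\dang(V, U + W^\perp) = 0$ as well and the identity is trivial. Assume henceforth that $\dang(V, W^\perp) > 0$, equivalently that the restriction of $\pi_W$ to $V$ is injective.

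Next, by the symmetry of $\dang(\cdot,\cdot,\cdot)$ in its arguments together with \eqref{eq:dangUVW}, one has $\dang(V, U + W^\perp) = \dang(V, W^\perp, U)$ (using $\dang(U, W^\perp) = 1$), and applying \eqref{eq:dangUVW} once more (merging the first two arguments) gives
\[\dang(V, U + W^\perp) = \dang(V + W^\perp, U) \cdot \dang(V, W^\perp).\]
So the task reduces to showing $\dang(V + W^\perp, U) = \dang(\pi_W(V), U)$. Since $v - \pi_W(v) \in W^\perp$ for every $v \in V$, we have $V + W^\perp = \pi_W(V) + W^\perp$. I would then compute $\dang(U, \pi_W(V), W^\perp)$ in two different orderings via \eqref{eq:dangUVW}: one ordering yields $\dang(\pi_W(V) + W^\perp, U)\dang(\pi_W(V), W^\perp) = \dang(V + W^\perp, U)$; the other yields $\dang(U + \pi_W(V), W^\perp)\dang(U, \pi_W(V)) = \dang(\pi_W(V), U)$. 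Equating these two expressions gives the missing identity.

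No serious obstacle is expected; the argument is pure bookkeeping, with the only subtlety being the verification that all the direct-sum and orthogonality conditions needed for the various $\dang$ factors to equal $1$ are satisfied, which in each case reduces to $U, \pi_W(V), U + \pi_W(V) \subset W$.
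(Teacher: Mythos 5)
Your argument is correct, and it takes a genuinely different route from the paper's proof. The paper argues by direct wedge-product computation: it pushes an orthonormal basis $(v_1,\dotsc,v_r)$ of $V$ through $\pi_W$, identifies $\norm{\pi_W(v_1)\wedge\dotsb\wedge\pi_W(v_r)}$ with $\dang(V,W^\perp)$ and $\norm{\pi_W(v_1)\wedge\dotsb\wedge\pi_W(v_r)\wedge\mathbf{u}}$ with $\dang(V,U,W^\perp)$ via \eqref{eq:wedgePi}, and then uses the ratio of these two norms to compute $\dang(\pi_W(V),U)$ with the (non-orthonormal) projected basis, finishing with a single application of Lemma~\ref{lm:dangUVW}. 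You instead never touch bases or wedge products: you treat \eqref{eq:dangUVW} as an associativity-type identity and evaluate the symmetric quantities $\dang(V,W^\perp,U)$ and $\dang(U,\pi_W(V),W^\perp)$ by grouping the arguments in two different ways, using $V+W^\perp=\pi_W(V)+W^\perp$ and the fact that any subspace of $W$ pairs to $1$ with $W^\perp$. The checks you flag are the right ones: the degenerate case $\dang(V,W^\perp)=0$ makes both sides of \eqref{eq:V2WandU} vanish, and Lemma~\ref{lm:dangUVW} as proved in the paper covers non-direct sums (both sides zero), so all your applications are legitimate. What your route buys is that it relies only on statements already proved (Lemma~\ref{lm:dangUVW}, symmetry of the multi-argument $\dang$, and orthogonality giving the value $1$), avoiding the paper's unproved intermediate fact that $\dang(V',U)$ equals the quotient of wedge norms for a non-orthonormal basis of $V'$; the paper's computation is shorter and makes the geometric content (how $\pi_W$ distorts volumes) more visible, but the two proofs carry essentially the same multilinear-algebra content packaged differently.
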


\begin{proof}
Both sides of \eqref{eq:V2WandU} vanish if the dimension of $V' = \pi_W(V)$ is smaller than that of $V$. So we can assume that $\dim V' = \dim V =r$. Let $(v_1,\dotsc,v_r)$ be an orthonormal basis of $V$. Then $(\pi_W(v_1),\dotsc,\pi_W(v_r))$ is a basis of $V'$. Moreover, by~\eqref{eq:wedgePi}, we have
\[\norm{\pi_W(v_1)\wedge \dotsb \wedge \pi_W(v_r)} = \dang(V,W^\perp)\]
and
\[\norm{\pi_W(v_1)\wedge \dotsb \wedge \pi_W(v_r)\wedge \mathbf{u}} = \dang(V,U,W^\perp),\]
where $\mathbf{u}$ is the wedge product an orthonormal basis of $U$. The desired equality \eqref{eq:V2WandU} follows from the fact 
\[\dang(V',U) = \frac{\norm{\pi_W(v_1)\wedge \dotsb \wedge \pi_W(v_r)\wedge \mathbf{u}}}{\norm{\pi_W(v_1)\wedge \dotsb \wedge \pi_W(v_r)}}\]
and Lemma~\ref{lm:dangUVW} applied to $V,U,W^\perp$ :
\[\dang(V,U,W^\perp) = \dang(U,W^\perp) \dang(V,U+W^\perp) = \dang(V,U+W^\perp). \qedhere\]
\end{proof}

\begin{lemm}
Let $V,W$ be linear subspaces of $\R^n$. If $V' = \pi_W(V)$, then for all $x \in W$,
\begin{equation}\label{eq:Wproj2V}
\dang(V,W^\perp) \norm{\pi_{V'}(x)} \leq \norm{\pi_V(x)} \leq \norm{\pi_{V'}(x)}.
\end{equation}
\end{lemm}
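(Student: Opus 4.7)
The plan is to reduce both inequalities to a single statement about the linear map $g := \pi_V\big|_{V'} : V' \to V$ by first establishing the identity $\pi_V(x) = \pi_V(\pi_{V'}(x))$ for every $x \in W$. To see this, recall $\pi_V(x)$ is characterized by $\ps{\pi_V(x)}{v} = \ps{x}{v}$ for all $v \in V$. Since $x \in W$ we have $\ps{x}{v} = \ps{x}{\pi_W(v)}$; since $\pi_W(v) \in V'$ the defining property of $\pi_{V'}$ gives $\ps{x}{\pi_W(v)} = \ps{\pi_{V'}(x)}{\pi_W(v)}$; and since $\pi_{V'}(x) \in V' \subset W$, this in turn equals $\ps{\pi_{V'}(x)}{v}$. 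Chaining these yields $\ps{\pi_V(x)}{v} = \ps{\pi_{V'}(x)}{v}$ for every $v \in V$, hence the claimed identity.

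The upper bound $\norm{\pi_V(x)} \leq \norm{\pi_{V'}(x)}$ is then immediate, since $\pi_V$ is $1$-Lipschitz: $\norm{\pi_V(x)} = \norm{\pi_V(\pi_{V'}(x))} \leq \norm{\pi_{V'}(x)}$.

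For the lower bound, setting $y := \pi_{V'}(x)$, it suffices to show that $\norm{g(y)} \geq \dang(V,W^\perp)\norm{y}$ for every $y \in V'$; equivalently, the smallest singular value of $g$ is at least $\dang(V,W^\perp)$. If $\dang(V,W^\perp) = 0$ there is nothing to prove. Otherwise $f := \pi_W\big|_V : V \to V'$ is a linear isomorphism, and a direct calculation identifies $g$ as its adjoint: for $v \in V$ and $y \in V'$,
\[\ps{f(v)}{y} = \ps{\pi_W(v)}{y} = \ps{v}{y} = \ps{v}{\pi_V(y)} = \ps{v}{g(y)},\]
where the middle equalities use $y \in V' \subset W$ and $v \in V$. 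Thus $f$ and $g$ share the same singular values $\sigma_1 \geq \dotsb \geq \sigma_r > 0$. Since $f$ is a restriction of an orthogonal projection, $\sigma_1 \leq 1$, and \eqref{eq:wedgePi2} gives $\sigma_1 \dotsm \sigma_r = \abs{\det f} = \dang(V,W^\perp)$. Therefore $\sigma_r \geq \sigma_1 \dotsm \sigma_r = \dang(V,W^\perp)$, which is the desired bound.

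The only mildly delicate step is the identification $g = f^*$, which makes the singular-value argument essentially a one-liner; the degenerate case $\dang(V,W^\perp) = 0$ (i.e.\ $\dim V' < \dim V$) is then handled trivially.
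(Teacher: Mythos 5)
Your proof is correct, and while the easy half coincides with the paper's (your identity $\pi_V(x)=\pi_V(\pi_{V'}(x))$ is exactly the paper's decomposition $x=y+z$ with $z\in {V'}^\perp\cap W\subset V^\perp$, just phrased via inner products), your lower bound goes by a genuinely different mechanism. The paper stays inside its $\dang$-calculus: it writes $\norm{\pi_V(y)}=\dang(\R y,V^\perp)\norm{y}$ via \eqref{eq:wedgePi2}, uses monotonicity of $\dang$ under the inclusion $\R y\subset V'$ together with \eqref{eq:perpperp}, and then needs Lemma~\ref{lm:V2WandU} (with the decomposition ${V'}^\perp=({V'}^\perp\cap W)+W^\perp$) to convert $\dang(V,{V'}^\perp)$ into $\dang(V,W^\perp)$. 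You instead identify $g=\pi_V|_{V'}$ as the adjoint of $f=\pi_W|_V$, note that $f$ and $g$ share singular values $\sigma_1\geq\dotsb\geq\sigma_r$, and conclude from $\sigma_1\leq 1$ and $\sigma_1\dotsm\sigma_r=\abs{\det f}=\dang(V,W^\perp)$ (again \eqref{eq:wedgePi2}) that $\sigma_r\geq\dang(V,W^\perp)$. This bypasses Lemma~\ref{lm:V2WandU} and the $\perp$-manipulations entirely and isolates the real reason the bound holds (all singular values of a restricted orthogonal projection are at most $1$, so the smallest dominates the product); the price is a small amount of bookkeeping to justify the adjoint identification and the fact that the product of singular values is the determinant, whereas the paper's route reuses tools ($\dang$ identities) it has already set up and needs elsewhere. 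Your handling of the degenerate case $\dang(V,W^\perp)=0$, and the observation that the upper bound needs no dimension assumption, are both correct.
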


\begin{proof}
Since $V' = \pi_W(V)$, we have ${V'}^\perp \cap W \subset V^\perp$. Hence we can write $x = y + z$ with $y = \pi_{V'}(x) \in V'$ and $z \in {V'}^\perp \cap W \subset V^\perp$. Then $\pi_V(x) = \pi_V(y)$. This gives the second inequality in \eqref{eq:Wproj2V}.

It is clear that $V$ and $V'$ have different dimensions if and only if $V$ and $W^\perp$ have nontrivial intersection, which is equivalent to $\dang(V,W^\perp)= 0$. In this case, the first inequality in the lemma holds. 

Thus we can assume $\dim V = \dim V'$. By \eqref{eq:wedgePi2}, $\norm{\pi_V(y)} = \dang(\R y, V^\perp) \norm{y}$. We know that
\[\dang(\R y, V^\perp) \geq \dang(V',V^\perp)= \dang(V,{V'}^\perp)\]
by the fact that $\R y \subset V'$ and \eqref{eq:perpperp}. Observe that $V'^\perp = {V'}^\perp\cap W + W^\perp$. Also, 
\[\dang(V, {V'}^\perp\cap W + W^\perp) = \dang(V,W^\perp) \dang(V',{V'}^\perp\cap W) = \dang(V,W^\perp),\]
by Lemma~\ref{lm:V2WandU} applied to $V,W$ and $U= {V'}^\perp\cap W$. Hence $\norm{\pi_V(y)} \geq \dang(V,W^\perp) \norm{y}$,
which proves the first inequality in \eqref{eq:Wproj2V}.
\end{proof}

\begin{lemm}\label{lm:fromVp2V}
Let $V,W$ be linear subspaces of $\R^n$ such that $\dang(V,W^\perp) > 0$. Write $V' = \pi_W(V)$. For any bounded subset $A \subset W$,
\begin{equation}\label{eq:fromVp2V}
\Ncov(\pi_{V'}(A)) \ll_n \dang(V,W^\perp)^{-n} \Ncov(\pi_V(A)).
\end{equation}
In particular, if moreover $\dim V = \dim W$, then for any bounded subset $A \subset W$,
\begin{equation}\label{eq:fromW2V}
\Ncov(A) \ll_n \dang(V,W^\perp)^{-n} \Ncov(\pi_V(A)).
\end{equation}
\end{lemm}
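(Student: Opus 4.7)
The plan is to exploit the previous lemma, which compares $\norm{\pi_V(x)}$ and $\norm{\pi_{V'}(x)}$ for $x \in W$, by packaging that inequality as a statement about a linear map between $V$ and $V'$, and then invoking the Lipschitz covering estimate \eqref{eq:sizefA}.

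First, I want to observe that, since $\dang(V,W^\perp) > 0$, the restriction $\pi_W|_V \colon V \to V'$ is a linear isomorphism, so $\dim V' = \dim V$. Consider the linear map $\phi \colon V' \to V$ defined by $\phi(v') = \pi_V(v')$. Using the decomposition $x = \pi_{V'}(x) + z$ with $z \in {V'}^\perp \cap W \subset V^\perp$ (which appeared in the proof of the preceding lemma), I get $\pi_V(x) = \pi_V(\pi_{V'}(x)) = \phi(\pi_{V'}(x))$ for every $x \in W$. Applied with $x \in A \subset W$, this says precisely that $\pi_V(A) = \phi(\pi_{V'}(A))$. Moreover, specializing \eqref{eq:Wproj2V} to $x \in V' \subset W$ (for which $\pi_{V'}(x) = x$) yields $\norm{\phi(x)} \geq \dang(V,W^\perp)\norm{x}$, so $\phi$ is injective, hence a bijection, and its inverse $\phi^{-1} \colon V \to V'$ satisfies $\norm{\phi^{-1}} \leq \dang(V,W^\perp)^{-1}$.

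From $\pi_{V'}(A) = \phi^{-1}(\pi_V(A))$ and the fact that $\phi^{-1}$ is a linear map of operator norm at most $\dang(V,W^\perp)^{-1}$, inequality \eqref{eq:sizefA} yields
\[\Ncov(\pi_{V'}(A)) = \Ncov(\phi^{-1}(\pi_V(A))) \ll_n \dang(V,W^\perp)^{-n}\Ncov(\pi_V(A)),\]
which is exactly \eqref{eq:fromVp2V}. For \eqref{eq:fromW2V}, I just note that when $\dim V = \dim W$, the subspace $V' = \pi_W(V) \subset W$ has the same dimension as $W$ (because $\dang(V,W^\perp) > 0$ forces $\dim V' = \dim V$), hence $V' = W$; and since $A \subset W = V'$, we have $\pi_{V'}(A) = A$, so \eqref{eq:fromW2V} is a direct consequence of \eqref{eq:fromVp2V}.

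There is no serious obstacle here: the whole content is the observation that $\pi_V|_W$ factors as $\phi \circ \pi_{V'}|_W$ with $\phi$ invertible on $V'$; the quantitative bound on $\phi^{-1}$ is immediate from the preceding lemma, and the rest is standard discretized geometry.
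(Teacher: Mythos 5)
Your proof is correct and rests on the same ingredients as the paper's own argument: the comparison \eqref{eq:Wproj2V} supplied by the preceding lemma, together with a standard covering-number estimate. The difference is only in packaging — you encode the comparison as the factorization $\pi_{V\mid W} = \phi \circ \pi_{V'\mid W}$ with $\phi = \pi_{V\mid V'}$ invertible and $\norm{\inv{\phi}} \leq \dang(V,W^\perp)^{-1}$, then invoke \eqref{eq:sizefA}, whereas the paper transports a $\delta$-cover of $\pi_V(A)$ by hand and rescales via \eqref{eq:deltaPrim}; your version has the small bonus of the exact identity $\pi_{V'}(A) = \inv{\phi}(\pi_V(A))$, which also makes the case $V' = W$ behind \eqref{eq:fromW2V} transparent.
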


\begin{proof}
Since $\dang(V,W^\perp) > 0$, $\pi_V$ restricted to $W$ is surjective. Hence we can cover $\pi_V(A)$ by the balls $\pi_V(b)^{(\delta)}$, $b \in \tilde A$ for some $\tilde A \subset W$ with $\abs{\tilde A} = \Ncov(\pi_V(A))$. Then $\pi_{V'}(A)$ is covered by the balls $\pi_{V'}(b)^{(\delta')}, b \in \tilde A$ with $\delta' = \inv{\dang(V,W^\perp)}\delta$. Indeed, $\forall a \in A$, there is $b \in \tilde A$ such that $\norm{\pi_V(a -b)} \leq \delta$. Hence, by \eqref{eq:Wproj2V}, $\norm{\pi_{V'}(a - b)} \leq \delta'$. Thus $\Ncov[\delta'](\pi_{V'}(A)) \leq \Ncov(\pi_V(A))$, which yields \eqref{eq:fromVp2V} using \eqref{eq:deltaPrim}.
\end{proof}

\subsection{Intersections}
Here we collect two useful lemmata about intersections and unions of intersections.

The first one is about intersections of large subsets. Let $A$ be a Borel set in $\R^n$. Let $\Theta$ be an index set equipped with a probability measure $\mu$ and for each $\theta \in \Theta$, we have a Borel subset $A_\theta$ of $A$. We need appropriate measurability, namely, the map $(x,\theta) \mapsto \indic_{A_\theta}(x)$ is required to be measurable.
\begin{lemm} \label{lm:bigCap}
In the situation described above, if there is $K \geq 1$ such that $\forall \theta \in \Theta$, $\lambda(A_\theta) \geq \lambda(A)/K$, then for any positive integer $q > 0$,
\[\mu^{\otimes q}(\bigl\{(\theta_1,\dotsc,\theta_q) \mid \lambda(A_{\theta_1}\cap \dotsb \cap A_{\theta_q}) \geq \frac{\lambda(A)}{2K^q}\bigr\}) \geq \frac{1}{2K^q}.\]
\end{lemm}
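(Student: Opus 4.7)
The plan is to use a second-moment / Fubini argument based on the indicator function $f(x) = \mu(\{\theta \in \Theta \mid x \in A_\theta\})$ for $x \in A$. The measurability hypothesis ensures $f$ is a Borel function on $A$.

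First, I would compute the first moment of $f$ by Fubini:
\[\int_A f(x)\dd\lambda(x) = \int_\Theta \lambda(A_\theta) \dd\mu(\theta) \geq \lambda(A)/K,\]
using the assumption $\lambda(A_\theta) \geq \lambda(A)/K$. Next, I would bound the $q$-th moment from below using Jensen's inequality applied to the convex function $t \mapsto t^q$ with respect to the normalized Lebesgue measure on $A$:
\[\int_A f(x)^q \dd\lambda(x) \geq \lambda(A)^{1-q} \Bigl(\int_A f(x) \dd\lambda(x)\Bigr)^q \geq \lambda(A)/K^q.\]
Now applying Fubini in the other direction identifies the $q$-th moment as
\[\int_A f(x)^q \dd\lambda(x) = \int_{\Theta^q} \lambda(A_{\theta_1}\cap \dotsb\cap A_{\theta_q}) \dd\mu^{\otimes q}(\theta_1,\dotsc,\theta_q),\]
so this integral is at least $\lambda(A)/K^q$.

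Finally, let $E$ denote the event whose probability we wish to bound. Since $\lambda(A_{\theta_1}\cap \dotsb \cap A_{\theta_q}) \leq \lambda(A)$ in general and is at most $\lambda(A)/(2K^q)$ on the complement of $E$, splitting the integral gives
\[\frac{\lambda(A)}{K^q} \leq \lambda(A)\, \mu^{\otimes q}(E) + \frac{\lambda(A)}{2K^q},\]
from which $\mu^{\otimes q}(E) \geq 1/(2K^q)$ follows at once.

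There is no real obstacle here; the proof is a routine two-step Fubini combined with Jensen's inequality and a Markov-type truncation. The only point to watch is that the measurability assumption on $(x,\theta)\mapsto \indic_{A_\theta}(x)$ is genuinely used twice, once for $f$ to make sense and once to interchange $\int_A$ with $\int_{\Theta^q}$, but both uses are immediate from Tonelli's theorem.
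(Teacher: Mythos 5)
Your proof is correct and is essentially the paper's own argument: the same double application of Fubini to identify $\int_{\Theta^q}\lambda(A_{\theta_1}\cap\dotsb\cap A_{\theta_q})\dd\mu^{\otimes q}$ with the $q$-th moment of $f(x)=\mu(\{\theta\mid x\in A_\theta\})$, the same use of Jensen's inequality against the normalized measure $\dd\lambda/\lambda(A)$, and the same Markov-type truncation (which the paper leaves implicit in ``The lemma follows''). No gaps.
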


\begin{proof}
By Fubini's theorem and then Jensen's inequality,
\begin{align*}
 &\int \lambda(A_{\theta_1}\cap \dotsb \cap A_{\theta_q}) \dd\mu^{\otimes q}(\theta_1,\dotsc,\theta_q)\\
=&\int_A \int \indic_{A_{\theta_1}}(x) \dotsm  \indic_{A_{\theta_q}}(x) \dd\mu^{\otimes q}(\theta_1,\dotsc,\theta_q) \dd\lambda(x)\\
=& \lambda(A)\int_A \bigl(\int \indic_{A_\theta}(x) \dd\mu(\theta)\bigr)^q \frac{\dd\lambda(x)}{\lambda(A)}\\
\geq& \lambda(A) \bigl(\int_A \int \indic_{A_\theta}(x) \dd\mu(\theta) \frac{\dd\lambda(x)}{\lambda(A)} \bigr)^q\\
=& \lambda(A) \bigl(\int \frac{\lambda(A_\theta)}{\lambda(A)} \dd\mu(\theta) \bigr)^q\\
\geq& \frac{\lambda(A)}{K^q}.
\end{align*}
The lemma follows.
\end{proof}

The next lemma is about small probability events happening simultaneously. Let $(E,\mu)$ be a probability space. Suppose we have a collection of subsets $(E_i)_{i \in \ensA{N}}$ of $E$. We will think $E_i$ as events with small probability and we want to estimate the probability such that a lot of them happen together. Here "a lot" is relatively to weights we give to the events. Let $(a_i)_{i \in \ensA{N}}$ be non-negative real numbers such that $\sum_{i=1}^N a_i = 1$. For $I \subset \ensA{N}$, write $a_I = \sum_{i \in I}a_i$. The following lemma is an easy consequence of Markov's inequality. 
\begin{lemm} \label{lm:smallCap}
With the notation above, we have, for any $a > 0$,
\[\mu\bigl( \bigcup_{I \mid a_I \geq a} \bigl( \bigcap_{i \in I}E_i\bigr)\bigr) \leq \inv{a}\max_{i\in \ensA{N}}\mu(E_i).\]
\end{lemm}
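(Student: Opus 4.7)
The plan is to reduce the bound to Markov's inequality applied to the weighted sum of indicators
\[f(x) = \sum_{i=1}^N a_i \indic_{E_i}(x).\]
The first step is a pointwise containment: for any $x$ in $\bigcup_{I \mid a_I \geq a} \bigcap_{i \in I} E_i$, let $I(x) = \{i \in \ensA{N} \mid x \in E_i\}$. Any witnessing $I$ automatically satisfies $I \subseteq I(x)$, so $f(x) = a_{I(x)} \geq a_I \geq a$. Hence the union on the left-hand side of the claimed inequality is contained in the super-level set $\{f \geq a\}$.

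The second step is an expectation bound: since the weights $a_i$ are nonnegative and sum to $1$, we have
\[\int f \dd\mu = \sum_{i=1}^N a_i \mu(E_i) \leq \max_{i \in \ensA{N}} \mu(E_i),\]
the right-hand side being an upper bound on the convex combination. Combining this with Markov's inequality $\mu(\{f \geq a\}) \leq a^{-1} \int f \dd\mu$ yields the desired estimate. There is no real obstacle here; the argument is essentially a weighted union bound via Markov, and the only thing worth noting is that the outer union on the left runs over the finitely many subsets $I \subset \ensA{N}$, so no measurability issue arises.
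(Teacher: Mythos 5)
Your proof is correct and is essentially the paper's own argument: the paper sets $X_i = \indic_{E_i}$ and applies Markov's inequality to $\sum_i a_i X_i$, which is exactly your function $f$. The only cosmetic difference is that the paper notes the union actually equals the super-level set $\{f \geq a\}$, while you only prove (and only need) the containment.
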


\begin{proof}
Consider the Bernoulli random variables $X_i = \indic_{E_i}$ for $i = 1,\dotsc,N$ so that $\mu(E_i) = \Espr{X_i}$ and 
\[\mu\bigl( \bigcup_{I \mid a_I \geq a} \bigcap_{i \in I}E_i\bigr) = \Prob{\sum_{i=1}^N a_iX_i \geq a}.\]
Then it follows from Markov's inequality that
\[\Prob{\sum_{i=1}^N a_iX_i \geq a} \leq \inv{a}\Espr{\sum_{i=1}^N a_iX_i} \leq \inv{a}\max_{i\in \ensA{N}}\Espr{X_i}.\]
This finishes the proof.
\end{proof}

\subsection{Additive combinatorial tools}
Let $A,B,C$ be bounded subsets of $\R^n$. We look at them at scale $\delta > 0$. We will use several well-known results from additive combinatorics in our metric entropic setting. We shall use some usual notation from additive combinatorics :
\[A+B = \ens{a + b \mid a \in A,\, b \in B},\]
\[A-B = \ens{a - b \mid a \in A,\, b \in B},\]
and for integer $k\geq 1$, $kA$ denotes the $k$-fold sumset $A + \dotsb + A$.
\begin{lemm}[Ruzsa triangular inequality]\label{lm:RuzsaTri}
We have
\[\Ncov(B)\Ncov(A-C) \ll_n \Ncov(A-B)\Ncov(B-C).\]
\end{lemm}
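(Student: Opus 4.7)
The plan is to port the classical proof of the Ruzsa triangular inequality for finite sets, $\abs{B}\abs{A-C} \leq \abs{A-B}\abs{B-C}$, to the discretized setting. In the finite version one fixes, for each $d \in A-C$, a representation $d = a(d) - c(d)$ with $a(d) \in A$ and $c(d) \in C$, and observes that the map
\[\phi \colon B \times (A - C) \to (A - B) \times (B - C), \quad (b,d) \mapsto (a(d) - b,\, b - c(d))\]
is injective because the sum of the two image coordinates recovers $d$, which then pins down $a(d)$, $c(d)$, and hence $b$. I would replace the sets by well-separated nets and replace strict injectivity by injectivity of the induced assignment into an optimal $\delta$-covering of $(A-B) \times (B-C)$.

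Concretely, take $\tilde B \subset B$ a maximal $c_1\delta$-separated subset and $\tilde D \subset A-C$ a maximal $c_2\delta$-separated subset, where $c_1, c_2$ are dimensional constants to be fixed below. By Lemma~\ref{lm:coverLeb} together with the scale-change estimate \eqref{eq:deltaPrim}, $\Ncov(B) \ll_n \abs{\tilde B}$ and $\Ncov(A-C) \ll_n \abs{\tilde D}$. For each $d \in \tilde D$, fix a representation $d = a(d) - c(d)$, and take $\delta$-coverings $X$ of $A - B$ and $Y$ of $B - C$ of minimal size: $\abs{X} = \Ncov(A-B)$, $\abs{Y} = \Ncov(B-C)$. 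To each $(b,d) \in \tilde B \times \tilde D$ I then assign a pair $\psi(b,d) \in X \times Y$ whose two coordinates lie within $\delta$ of $a(d) - b$ and $b - c(d)$ respectively.

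The heart of the argument is the verification that $\psi$ is injective for appropriate $c_1, c_2$. Suppose $\psi(b_1,d_1) = \psi(b_2,d_2)$. Adding the two coordinate bounds yields $\norm{d_1 - d_2} = \norm{(a(d_1) - c(d_1)) - (a(d_2) - c(d_2))} \leq 4\delta$; choosing $c_2 > 4$ forces $d_1 = d_2$ and hence $a(d_1) = a(d_2)$, and then the first-coordinate bound gives $\norm{b_1 - b_2} \leq 2\delta$, so choosing $c_1 > 2$ forces $b_1 = b_2$. It follows that
\[\Ncov(B)\,\Ncov(A-C) \ll_n \abs{\tilde B}\,\abs{\tilde D} \leq \abs{X}\abs{Y} = \Ncov(A-B)\,\Ncov(B-C),\]
which is the claim.

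I do not anticipate any serious obstacle; the argument is metrically elementary and follows the finite prototype faithfully. The only care is the scale bookkeeping described above, chosen so that the separation of the nets $\tilde B$ and $\tilde D$ absorbs the $\delta$-fuzz introduced by quantising $\phi(b,d)$ onto the nearest grid point of $X \times Y$.
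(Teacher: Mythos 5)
Your argument is correct, but it follows a different route from the paper. The paper does not redo Ruzsa's argument at all: it discretizes $\R^n$ by the lattice $\delta\cdot\Z^n$, associating to each set $A$ the lattice set $\tilde A = \{a \in \delta\cdot\Z^n \mid A \cap a^{(n\delta)} \neq \varnothing\}$, notes that these approximations behave well under sums and differences, and then invokes the discrete Ruzsa triangular inequality (e.g.\ from Tao--Vu) as a black box. You instead transplant the classical injection proof directly into the metric setting: fixing a representation $d = a(d) - c(d)$ for each point of a $c_2\delta$-separated net $\tilde D$ of $A-C$, quantizing $(b,d) \mapsto (a(d)-b,\, b-c(d))$ onto minimal $\delta$-coverings of $A-B$ and $B-C$, and checking that the separation constants $c_1 > 2$, $c_2 > 4$ absorb the $\delta$-fuzz so the assignment stays injective; combined with $\Ncov(B) \ll_n \abs{\tilde B}$ and $\Ncov(A-C) \ll_n \abs{\tilde D}$ (maximality of the nets plus the scale-change estimate \eqref{eq:deltaPrim}), this gives the inequality with only dimensional constant losses. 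The trade-off: your proof is self-contained and elementary, needing no external combinatorial input, while the paper's lattice-approximation device is shorter here and, more importantly, disposes of the Pl\"unnecke--Ruzsa inequality (Lemma~\ref{lm:RuzsaSum}) by the same reduction, whereas adapting that proof from scratch in the metric setting would be substantially more work.
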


\begin{lemm}[Plünnecke-Ruzsa inequality]\label{lm:RuzsaSum}
For all $K \geq 1$, if $\Ncov(A + B) \leq K \Ncov(B)$ then for all natural number $k$ and $l$,
\[\Ncov(kA - lA) \ll_n K^{k+l} \Ncov(B).\]
\end{lemm}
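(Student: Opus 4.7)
The plan is to adapt the standard proof of the Plünnecke--Ruzsa inequality for finite subsets of an abelian group to the metric-entropy setting in $\R^n$. The key inputs are Lemma~\ref{lm:RuzsaTri} (already stated in this framework) and a Petridis-type ``magnification'' inequality. The idea is to establish the Petridis inequality, iterate it, and then close the loop with the Ruzsa triangle inequality.

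First I would establish a Petridis-type statement: under the hypothesis $\Ncov(A+B) \leq K \Ncov(B)$, there exists a nonempty $B' \subset B$ with $\Ncov(B') \gg_n \Ncov(B)$ such that for every bounded $C \subset \R^n$,
\[\Ncov(A + B' + C) \ll_n K \Ncov(B' + C).\]
In the finite-cardinality setting this $B'$ is the minimizer of $\abs{A+X}/\abs{X}$ over nonempty $X \subset B$; here one works with $\Ncov$ throughout. Iterating this inequality $k$ times yields $\Ncov(kA + B') \ll_n K^k \Ncov(B')$, and by the symmetric argument applied to $-A$ one obtains $\Ncov(lA - B') \ll_n K^l \Ncov(B')$. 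Then Lemma~\ref{lm:RuzsaTri} applied with the three sets $kA$, $B'$, $lA$ gives
\[\Ncov(B')\Ncov(kA - lA) \ll_n \Ncov(kA - B')\Ncov(B' - lA) \ll_n K^{k+l}\Ncov(B')^2,\]
and dividing by $\Ncov(B') \gg_n \Ncov(B)$ completes the argument.

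The main obstacle is proving the Petridis inequality in the $\Ncov$ setting while keeping the implicit constants uniform in $k$ and $l$. A naive iteration in the metric-entropy framework would accumulate a factor $C_n^{k+l}$ per sumset operation, which is not absorbed into the target bound $\ll_n K^{k+l}$. To control this, I would fix maximal $2\delta$-separated nets $\tilde A \subset A$ and $\tilde B \subset B$ at the outset (using Lemma~\ref{lm:coverLeb} to relate their cardinalities to $\Ncov(A)$ and $\Ncov(B)$), translate the hypothesis into a cardinality bound on a suitably pruned sumset of the nets via a volume-packing argument based on the fact that all sets lie in a ball of bounded radius, apply the classical finite-group Petridis iteration (which incurs no per-step dimensional constant), and convert back to $\Ncov$ only at the very end. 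This way all dimensional losses are incurred a bounded number of times, so the final constant is genuinely of the form $C_n K^{k+l}$ as required.
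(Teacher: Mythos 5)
Your overall route is the one the paper itself takes: transfer the hypothesis to finite sets, run the discrete Pl\"unnecke--Ruzsa inequality (which you re-derive via Petridis rather than citing it), and transfer back. The genuine gap is in the transfer step. Maximal $2\delta$-separated nets $\tilde A\subset A$, $\tilde B\subset B$ do not give the cardinality hypothesis the finite iteration needs: sums of separated points need not be separated, and generically all $\abs{\tilde A}\,\abs{\tilde B}$ pairwise sums are distinct, so $\abs{\tilde A+\tilde B}$ can be of order $\abs{\tilde A}\,\abs{\tilde B}$ even when $\Ncov(A+B)\leq K\Ncov(B)$ (already for $A=B=\Ball(0,1)$ with generic nets). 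Pruning the sumset does not repair this, because the Petridis iteration, like the discrete Pl\"unnecke--Ruzsa theorem, requires an upper bound on the \emph{full} sumset of the two finite sets being iterated, not on a separated subset of it, and one cannot in general prune $\tilde A,\tilde B$ themselves so that their sums become separated while keeping $\abs{\tilde B}$ comparable to $\Ncov(B)$. The missing idea is exactly the device in the paper's proof: round to the lattice $\delta\cdot\Z^n$, i.e.\ take $\tilde A=\{a\in\delta\cdot\Z^n\mid A\cap a^{(n\delta)}\neq\varnothing\}$ and similarly for $B$. Since the lattice is closed under addition and subtraction, all the iterated sum/difference sets stay $\delta$-separated, so their cardinalities are comparable (up to $\ll_n$ factors and $O((k+l)\delta)$-neighborhoods) to the corresponding covering numbers; with that replacement your Petridis--Ruzsa scheme does go through, and then differs from the paper only in re-proving the discrete input instead of quoting it.

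Two further points. The extra uniformity you claim is not actually delivered: translating the hypothesis to the finite setting yields $\abs{\tilde A+\tilde B}\leq C_nK\abs{\tilde B}$ with a dimensional constant \emph{inside} the ratio, the discrete iteration raises this constant to the power $k+l$, and the back-conversion passes through an $O((k+l)\delta)$-neighborhood; so the final constant still depends on $k$ and $l$, just as in the paper's black-box argument (harmless, since the lemma is only used for bounded $k,l$). Also, Petridis's lemma does not give $\Ncov(B')\gg_n\Ncov(B)$ (the minimizing set may be small), but you do not need it: $B'\subset B$ gives $\Ncov(B')\leq\Ncov(B)$, which is the direction used after dividing. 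Finally, arrange the Ruzsa triangle with middle set $-B'$, so that only bounds on $kA+B'$ and $lA+B'$ (both from the same Petridis set $B'$) are required; the ``symmetric argument applied to $-A$'' is not available from the hypothesis, which controls $A+B$ but not $B-A$, and would in any case produce a different subset of $B$.
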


Both lemmata above can be obtained by approximating $\R^n$ by the lattice $\delta.\Z^d$ and then using its discrete counterpart (see for example \cite{TaoVu}) as a black box. More precisely for a subset $A\subset \R^n$, we define
\[\tilde A = \bigl\{a \in \delta \cdot \Z^n \mid A \cap a^{(n\delta)} \neq \varnothing\bigr\}.\]
Then $A \subset {\tilde A}^{(n\delta)}$ and $\tilde A \subset A^{(n\delta)}$. These inclusions behave nicely under addition and subtraction.

Before stating the Balog-Szemerédi-Gowers theorem in the discretized setting let us recall some basic facts about energy in the discrete setting. Let $\phi \colon X \to Y$ be a map between discrete sets and $A$ a finite subset of $X$, define the $\phi$-energy of $A$ to be
\[\En(\phi,A) = \sum_{y \in Y} \abs{A \cap \inv{\phi}(y)}^2.\]
In other words, it is the square of the $l^2$-norm of the push-forward of the counting measure on $A$ under $\phi$ or the number of collisions of the map $\phi_{\mid A}$ :
\[\En(\phi,A) = \norm{\phi_*\!\indic_A}_2^2 = \#\ens{(a_1,a_2) \in A \times A \colon \phi(a_1) = \phi(a_2)}.\]
For example, the usual additive energy between two subsets $A$ and $B$ in an abelian group $G$ is $\En(+,A\times B)$ where $+ \colon G \times G \to G$ denotes the group law of $G$. 

When nothing is known about $\phi$, $\En(\phi,A)$ can be as small as $\abs{A}$ (when $\phi$ is injective) and as large as $\abs{A}^2$ (when $\phi$ is constant on $A$). If the image of $A$ by $\phi$ is small then the energy is large by the Cauchy-Schwarz inequality :
\begin{equation}\label{eq:phiEnergyD}
\En(\phi,A) \geq \frac{\abs{A}^2}{\abs{\phi(A)}}.
\end{equation}
The converse is not true. Nevertheless, we have a partial converse.
\begin{lemm}\label{lm:EnergySubset}
Suppose there are $K,M > 0$ such that $\En(\phi,A) \geq \frac{M}{K}\abs{A}$ and for all $y \in Y$, $\abs{A \cap \inv{\phi}(y)} \leq M$. Then there exists $A' \subset A$ such that $\abs{A'} \geq \Inv{2K}\abs{A}$ and $\abs{\phi(A')} \leq \frac{2K}{M}\abs{A}$.
\end{lemm}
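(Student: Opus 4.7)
The plan is to exploit a popular-fibers dichotomy. Write $f(y) := \abs{A\cap\inv{\phi}(y)}$, so that $\sum_y f(y) = \abs{A}$ and $\sum_y f(y)^2 = \En(\phi,A) \geq M\abs{A}/K$ by hypothesis. Call a fiber \emph{popular} if $f(y) \geq M/(2K)$, and let $Y_{\mathrm{pop}}$ denote the set of popular fibers.

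First I would bound the contribution of unpopular fibers to the energy:
\[\sum_{y \notin Y_{\mathrm{pop}}} f(y)^2 \leq \frac{M}{2K} \sum_{y \notin Y_{\mathrm{pop}}} f(y) \leq \frac{M\abs{A}}{2K}.\]
Combined with $\En(\phi,A) \geq M\abs{A}/K$, this gives the reverse bound on the popular part, namely $\sum_{y \in Y_{\mathrm{pop}}} f(y)^2 \geq M\abs{A}/(2K)$. Define $A' := A \cap \inv{\phi}(Y_{\mathrm{pop}})$; the remaining task is to verify the two required inequalities.

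For the lower bound on $\abs{A'}$ I would use the pointwise hypothesis $f(y) \leq M$ to linearize the energy on popular fibers:
\[M \cdot \abs{A'} = M \sum_{y \in Y_{\mathrm{pop}}} f(y) \geq \sum_{y \in Y_{\mathrm{pop}}} f(y)^2 \geq \frac{M\abs{A}}{2K},\]
so $\abs{A'} \geq \abs{A}/(2K)$. For the upper bound on $\abs{\phi(A')}$, the defining lower bound $f(y) \geq M/(2K)$ on $Y_{\mathrm{pop}}$ yields $\abs{Y_{\mathrm{pop}}} \cdot M/(2K) \leq \sum_{y \in Y_{\mathrm{pop}}} f(y) \leq \abs{A}$, so $\abs{\phi(A')} \leq \abs{Y_{\mathrm{pop}}} \leq 2K\abs{A}/M$.

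There is no real obstacle here: the whole argument is a level-set computation. The only genuine choice is the threshold $M/(2K)$, which is fixed by the requirement that the unpopular contribution to the energy be at most half of the total; the two conclusions then follow by a single linearization using $f \leq M$ and by counting using $f \geq M/(2K)$ respectively.
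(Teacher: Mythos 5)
Your argument is correct and is essentially the paper's own proof: you trim the fibers below the threshold $\frac{M}{2K}$, bound their contribution to the energy by $\frac{M}{2K}\abs{A}$ to force $\abs{A'}\geq\Inv{2K}\abs{A}$ via $f\leq M$, and count the popular fibers to get $\abs{\phi(A')}\leq\frac{2K}{M}\abs{A}$. The only difference is cosmetic (you subtract the unpopular part from the total energy rather than bounding the whole energy by $\frac{M}{2K}\abs{A}+M\abs{A'}$), so nothing further is needed.
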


\begin{proof}
The idea is to trim off small fibers. We consider 
\[Y' = \bigl\{y \in Y \mid \abs{A \cap \inv{\phi}(y)} \geq \frac{M}{2K}\bigr\}\]
and let $A' = \inv{\phi}(Y')$. By the definition $Y'$, we have
\[\abs{A} \geq \sum_{y \in Y'} \abs{A \cap \inv{\phi}(y)}  \geq \frac{M}{2K} \abs{Y'}.\]
Hence $\abs{\phi(A')} \leq \frac{2K}{M}\abs{A}$.

From the definition of the energy,
\begin{align*}
\omega(\phi,A) &\leq \frac{M}{2K}\sum_{y\notin Y} \abs{A \cap \inv{\phi}(y)} + M \sum_{y \in Y'} \abs{A \cap \inv{\phi}(y)}\\
&\leq  \frac{M}{2K}\abs{A} + M\abs{A'}.
\end{align*}
It follows that $\abs{A'} \geq \Inv{2K}\abs{A}$.
\end{proof}

What the Balog-Szemerédi-Gowers theorem roughly says is that if $\phi$ is a group law (or has some injectivity property similar to a group law) and $A$ is a Cartesian product then the conclusion of $A'$ in the conclusion of the lemma can be chosen to be a Cartesian product.

For discretized sets we have an analogous notion of energy. Let $\phi \colon X \to Y$ be a map between metric spaces and $A$ a bounded subset of $X$. We define the $\phi$-energy of $A$ at scale $\delta$ as
\[\En_\delta(\phi,A) = \Ncov\bigl(\ens{(a,a') \in A \times A \mid d(\phi(a),\phi(a')) \leq \delta}\bigr).\]
Here we adhere to the convention that the distance on any Cartesian product $X \times Y$ of metric spaces is such that
\[d\bigl((x,y),\,(x',y')\bigr)^2 = d(x,x')^2 + d(y,y')^2,\]
for all pairs $(x,y),\, (x',y') \in X\times Y$.

The analogue of inequality~\eqref{eq:phiEnergyD} is true. Namely, if $A$ is a bounded subset of $\R^n$ and $\phi$ is defined on $\R^n$ then
\begin{equation}\label{eq:phiEnergy}
\En_\delta(\phi,A) \gg_n \frac{\Ncov(A)^2}{\Ncov(\phi(A))}.
\end{equation}

We also remark that if $\psi \colon A \to \R^n$ is $K$-Lipschitz with $K \geq 1$ and $\phi \colon \R^n \to Y$ is an another map, then it follows from \eqref{eq:sizefA} that 
\begin{equation}\label{eq:Enphipsi}
\En_\delta(\phi, \psi A) \ll_n K^{2n} \En_\delta(\phi \circ \psi, A).
\end{equation}

We will need the following additive version of the Balog-Szemerédi-Gowers theorem which gives a nice criterion for the additive energy between two sets to be large. See for example~\cite[Theorem 6.10]{Tao_productset} where it is proved in a much broader context.
\begin{thm}[Balog-Szemerédi-Gowers theorem]\label{thm:BSG+}
Let $K \geq 1$ be a parameter. Let $A$ and $B$ be bounded subsets of $\R^n$. If
\[\En_\delta(+,A\times B) \geq \frac{1}{K}\Ncov(A)^{\frac{3}{2}}\Ncov(B)^{\frac{3}{2}},\]
then there exists $A' \subset A$ and $B' \subset B$ such that 
\[\Ncov(A') \gg_n K^{-O(1)}\Ncov(A),\; \Ncov(B') \gg_n K^{-O(1)}\Ncov(B)\]
and 
\[\Ncov(A' + B') \ll_n K^{O(1)}\Ncov(A)^{\Inv{2}}\Ncov(B)^{\Inv{2}}.\]
\end{thm}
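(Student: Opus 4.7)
The plan is to reduce to the standard discrete Balog--Szemer\'edi--Gowers theorem in the lattice $\delta\cdot \Z^n$, using the discretization $A \mapsto \tilde A$ introduced just above Lemma~\ref{lm:RuzsaTri}. First, I replace $A$ and $B$ by their lattice approximations $\tilde A, \tilde B \subset \delta \cdot \Z^n$. Using the inclusions $\tilde A \subset A^{(n\delta)}$ and $A \subset \tilde A^{(n\delta)}$ together with Lemma~\ref{lm:coverLeb}, one gets $\abs{\tilde A} \asymp_n \Ncov(A)$ and likewise for $\tilde B$. The crucial preliminary step is to compare the discretized energy $\En_\delta(+,A\times B)$ with the classical additive energy $\En(+,\tilde A \times \tilde B) = \abs{\{(\tilde a_1,\tilde b_1,\tilde a_2,\tilde b_2) \in (\tilde A\times \tilde B)^2 : \tilde a_1 + \tilde b_1 = \tilde a_2 + \tilde b_2\}}$. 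The set of quadruples in $(A \times B)^2$ whose sum--difference has norm at most $\delta$ is $O_n(\delta)$-close, in $\R^{4n}$, to the set of lattice quadruples in $(\tilde A \times \tilde B)^2$ whose sum--difference has norm at most $C_n \delta$; a pigeonhole over the $O_n(1)$ lattice points contained in a ball of radius $C_n\delta$ in $\delta \cdot \Z^n$ then gives $\En(+,\tilde A \times \tilde B) \gg_n K^{-1}\abs{\tilde A}^{3/2}\abs{\tilde B}^{3/2}$.

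Second, I apply the classical Balog--Szemer\'edi--Gowers theorem (see for instance \cite{TaoVu,Tao_productset}) in the abelian group $\delta \cdot \Z^n$. This furnishes subsets $\tilde A' \subset \tilde A$ and $\tilde B' \subset \tilde B$ with $\abs{\tilde A'} \gg K^{-O(1)} \abs{\tilde A}$, $\abs{\tilde B'} \gg K^{-O(1)} \abs{\tilde B}$, and $\abs{\tilde A' + \tilde B'} \ll K^{O(1)} \abs{\tilde A}^{1/2}\abs{\tilde B}^{1/2}$. I then lift back by setting $A' = A \cap (\tilde A')^{(n\delta)}$ and $B' = B \cap (\tilde B')^{(n\delta)}$. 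This choice yields $\Ncov(A') \asymp_n \abs{\tilde A'}$ (and similarly for $B'$), while the inclusion $A' + B' \subset (\tilde A' + \tilde B')^{(2n\delta)}$ combined with Lemma~\ref{lm:coverLeb} gives $\Ncov(A'+B') \ll_n \abs{\tilde A' + \tilde B'}$. Rewriting everything in terms of $\Ncov(A)$ and $\Ncov(B)$ produces the desired conclusion.

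The main obstacle I expect is the energy comparison in the first step: translating the discretized energy, which counts $\delta$-close additive quadruples in $\R^n$, into the classical additive energy, which counts exact lattice quadruples in $\delta \cdot \Z^n$. Once this transfer is made precise (with constants depending only on $n$), the remainder of the argument is routine lattice bookkeeping based on the standard inclusions between a set and its discretization and the fact that sumsets commute with these inclusions up to an inflation of the tolerance by an $O(n)$ factor.
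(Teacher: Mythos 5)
Your argument is correct, but it is not the route the paper takes: the paper does not prove Theorem~\ref{thm:BSG+} at all, it invokes \cite[Theorem 6.10]{Tao_productset}, where the statement is established directly in the metric-entropy setting (and in fact for non-commutative groups). Your reduction to the discrete Balog--Szemer\'edi--Gowers theorem via the lattice approximation $\tilde A \subset \delta\cdot\Z^n$ is precisely the mechanism the paper only sketches for Lemma~\ref{lm:RuzsaTri} and Lemma~\ref{lm:RuzsaSum}, extended to the energy hypothesis, and it does work: the near-quadruple set defining $\En_\delta(+,A\times B)$ lies in the $O_n(\delta)$-neighborhood of the set of lattice quadruples with $\abs{\tilde a + \tilde b - \tilde a' - \tilde b'} \ll_n \delta$, so its $\delta$-covering number is $\ll_n$ the cardinality of that lattice set, and lifting the output sets back via $A' = A \cap (\tilde A')^{(n\delta)}$, $B' = B \cap (\tilde B')^{(n\delta)}$ together with $A' + B' \subset (\tilde A' + \tilde B')^{(2n\delta)}$ gives the stated conclusions with only $O_n(1)$ and $K^{O(1)}$ losses. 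One small step you elide: the pigeonhole over the $O_n(1)$ admissible values of $v = \tilde a + \tilde b - \tilde a' - \tilde b'$ produces many quadruples with a \emph{fixed} shift $v$, i.e.\ a lower bound on $\sum_x r(x)\,r(x-v)$ where $r(x) = \abs{\ens{(\tilde a,\tilde b) \in \tilde A \times \tilde B \mid \tilde a + \tilde b = x}}$, not yet on the exact energy $\sum_x r(x)^2 = \En(+,\tilde A \times \tilde B)$; you need one additional line of Cauchy--Schwarz, $\sum_x r(x)\,r(x-v) \leq \sum_x r(x)^2$ (or, equivalently, translate $\tilde B$ by $v$), to close this. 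With that line inserted your proof is complete and self-contained, which the paper's citation is not; the trade-off is that your argument is tied to the abelian lattice setting, whereas the cited treatment covers a much broader context.
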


\section{Technical lemmata}
In this section, we show the deduction of Theorem~\ref{thm:proj} from Theorem~\ref{thm:main} and collect several other lemmata which are needed in the next section. Since they are mostly about technical details, it is advisable to skip their proofs for a first reading. In this section, implied constants in Landau notations $O(f)$ and Vinogradov notations $f \ll g$ may depend on the dimension $n$ and the parameter $\kappa$. Every statement is true only for $\delta > 0$ sufficiently small and by sufficiently small we mean smaller that a constant depending on all other parameters (e.g. $n$, $m$, $\alpha$, $\kappa$ and $\epsilon$) but not on $A$ nor on $\mu$. Typically, if $C = O(1)$ then $C \leq \delta^{-\epsilon}$.

\subsection{Proof of Theorem~\ref{thm:proj} admitting Theorem~\ref{thm:main}}
We deduce Theorem~\ref{thm:proj} from Theorem~\ref{thm:main}.

\begin{prop}\label{pr:unionAps}
Assume that $0 < m < n$, $0 < \alpha < n$, $\kappa > 0$ and $\epsilon > 0$ are parameters that make Theorem~\ref{thm:main} true. Let $A$ be a subset of $\R^n$ contained in the unit ball. Let $\mu$ be a probability measure on $\Gr(\R^n,m)$. Assume that $\mu$ satisfies \eqref{eq:nonconmu} and $A$ satisfies
\begin{equation*}
\Ncov(A) \geq \delta^{-\alpha + \frac{\epsilon}{2}};
\end{equation*}
\begin{equation*}
\forall \rho \geq \delta,\; \forall x\in \R^n,\quad \Ncov(A \cap \Ball(x,\rho)) \leq \delta^{-\frac{\epsilon}{2}}\rho^\kappa \Ncov(A).
\end{equation*}
Then
\begin{equation}\label{eq:unionAps}
\mu\bigl(\EC(A,\frac{\epsilon}{3})\bigr) \leq \delta^{\frac{\epsilon}{2}}.
\end{equation}
\end{prop}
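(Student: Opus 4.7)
The plan is to argue by contradiction: assume $\mu(F) > \delta^{\epsilon/2}$ where $F := \EC(A, \epsilon/3)$, and iteratively peel off ``good'' subsets of $A$ using Theorem~\ref{thm:main}. Set $A_0 := A$; inductively, apply Theorem~\ref{thm:main} to $A_{k-1}$ (at parameter $\epsilon$) to obtain $A^{(k)} \subset A_{k-1}$ with $\mu(\EC(A^{(k)},\epsilon)) \leq \delta^\epsilon$, and set $A_k := A_{k-1} \setminus A^{(k)}$. Passing if necessary to $2\delta$-separated representatives keeps the $A^{(j)}$'s pairwise $\delta$-separated, so that $\sum_j \Ncov(A^{(j)}) \ll_n \Ncov(A)$. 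The hypotheses of Theorem~\ref{thm:main} at level $\epsilon$ transfer from those on $A$ at level $\epsilon/2$: since $A_{k-1} \subset A$, the non-concentration inequality inherits only a multiplicative loss $\Ncov(A)/\Ncov(A_{k-1})$, absorbed by the slack $\epsilon/2 + \epsilon/3 < \epsilon$; the size lower bound $\Ncov(A_{k-1}) \geq \delta^{-\alpha+\epsilon}$ will be supplied by the peeling estimate below.

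The core combinatorial step is a peeling lemma. Let $F_k := F \setminus \bigcup_{j\leq k}\EC(A^{(j)},\epsilon)$. For $V \in F_k$ with witness $A' \subset A$ (so $\Ncov(A') \geq \delta^{\epsilon/3}\Ncov(A)$ and $\Ncov(\pi_V A') < \delta^{-(m/n)\alpha-\epsilon/3}$), the subset $A' \cap A^{(j)} \subset A^{(j)}$ cannot itself witness $V \in \EC(A^{(j)},\epsilon)$: we have $\Ncov(\pi_V(A' \cap A^{(j)})) \leq \Ncov(\pi_V A') < \delta^{-(m/n)\alpha-\epsilon/3} < \delta^{-(m/n)\alpha-\epsilon}$, so if the density condition $\Ncov(A' \cap A^{(j)}) \geq \delta^\epsilon \Ncov(A^{(j)})$ also held, then $V$ would lie in $\EC(A^{(j)},\epsilon)$, contradicting $V \in F_k$. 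Hence $\Ncov(A' \cap A^{(j)}) < \delta^\epsilon \Ncov(A^{(j)})$. Summing and using $\sum_j \Ncov(A^{(j)}) \ll_n \Ncov(A)$,
\[
\Ncov(A' \setminus A_k) \leq \sum_{j \leq k}\Ncov(A' \cap A^{(j)}) \ll_n \delta^\epsilon \Ncov(A),
\]
so $\Ncov(A_k) \geq \Ncov(A' \cap A_k) \geq \tfrac{1}{2}\delta^{\epsilon/3}\Ncov(A) \geq \delta^{-\alpha + 5\epsilon/6}$ for $\delta$ sufficiently small, validating the next iteration.

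Since $\sum_k \Ncov(A^{(k)}) \ll_n \Ncov(A) < \infty$ and each $\Ncov(A^{(k)}) > 0$, the iteration terminates at some step $K$, necessarily with $F_K = \emptyset$ (else the peeling lemma would allow another step). Thus $F \subset \bigcup_{j \leq K} \EC(A^{(j)},\epsilon)$ and $\mu(F) \leq K\delta^\epsilon$. The main obstacle I anticipate is bounding $K$ sharply enough: the naive estimate $K \ll_n \Ncov(A)/\delta^{-(m/n)\alpha-\epsilon} \leq \delta^{-(n - (m/n)\alpha) - \epsilon}$ falls short of the required $K \leq \delta^{-\epsilon/2}$. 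I would close this gap by arranging, through an inner dyadic pigeonhole on the sizes of Theorem~\ref{thm:main}'s outputs (so as to preserve the $\EC$-bound when forming unions within a fixed dyadic band), that $\Ncov(A_k) \leq \tfrac{1}{2}\Ncov(A_{k-1})$ at each step; this forces $K = O_n(\log(1/\delta))$, yielding $\mu(F) \leq O_n(\log(1/\delta))\,\delta^\epsilon \leq \delta^{\epsilon/2}$ for $\delta$ sufficiently small, the desired contradiction.
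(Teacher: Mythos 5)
Your peeling construction and the pigeonhole inside the ``peeling lemma'' are essentially the paper's decomposition, but the final counting step has a genuine gap, and you have correctly located it yourself: the conclusion $\mu(F) \leq K\delta^\epsilon$ only helps if $K \lesssim \delta^{-\epsilon/2}$, whereas Theorem~\ref{thm:main} gives no size control on its output beyond the implicit bound $\Ncov(A^{(k)}) \gtrsim \delta^{-\frac{m}{n}\alpha-\epsilon}$, so $K$ can genuinely be of order $\delta^{-(1-\frac{m}{n})\alpha}$. The proposed repair does not close this gap: grouping the outputs into a dyadic size band and taking their union does \emph{not} ``preserve the $\EC$-bound''. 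If $B_1,\dotsc,B_M$ are separated pieces of comparable size, each with $\mu(\EC(B_i,\epsilon)) \leq \delta^\epsilon$, then a witness $A'$ for $V \in \EC(B_1\cup\dotsb\cup B_M,\epsilon)$ only has, by pigeonhole, density $\geq \delta^\epsilon$ in \emph{some single} $B_i$, so all you get is $\EC(\bigcup_i B_i,\epsilon) \subset \bigcup_i \EC(B_i,\epsilon')$ and hence a bound $M\delta^\epsilon$ with $M$ again uncontrolled. In other words, the assertion that a union of many pieces with small exceptional sets again has a small exceptional set is precisely the content of the proposition you are proving; invoking it inside a dyadic band is circular, and there is no mechanism forcing the remainder to halve in boundedly many applications of Theorem~\ref{thm:main}.

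The missing idea is to replace the union bound by a \emph{weighted, simultaneous} statement plus Markov's inequality, which is how the paper proceeds. Keep your pieces $A_1,\dotsc,A_N$ (with $2\delta$-neighborhoods removed at each stage so that covering numbers add), stop when the remainder has $\Ncov \leq \delta^{\epsilon/2}\Ncov(A)$, and set $a_i = \Ncov(A_i)/\Ncov(A_0)$ where $A_0 = \bigcup_i A_i$. For $V \in \EC(A,\frac{\epsilon}{3})$ with witness $A'$, your own pigeonhole computation, run quantitatively, shows more than ``some index survives'': since $\Ncov(A') \geq \delta^{\epsilon/3}\Ncov(A)$ and the remainder is $\leq \delta^{\epsilon/2}\Ncov(A)$, the index set $I = \{i \mid \Ncov(A'^{(2\delta)}\cap A_i) \geq \delta^\epsilon \Ncov(A_i)\}$ must satisfy $\sum_{i\in I} a_i \geq \delta^{\epsilon/2}$, and for every $i \in I$ the set $A'^{(2\delta)}\cap A_i$ witnesses $V \in \EC(A_i,\epsilon)$. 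Thus each exceptional $V$ lies in $\bigcap_{i\in I}\EC(A_i,\epsilon)$ for some $I$ of weight $a_I \geq \delta^{\epsilon/2}$, and Lemma~\ref{lm:smallCap} (Markov's inequality applied to $\sum_i a_i \indic_{\EC(A_i,\epsilon)}$) gives
\[
\mu\bigl(\EC(A,\tfrac{\epsilon}{3})\bigr) \leq \delta^{-\frac{\epsilon}{2}} \max_i \mu(\EC(A_i,\epsilon)) \leq \delta^{-\frac{\epsilon}{2}}\delta^{\epsilon} = \delta^{\frac{\epsilon}{2}},
\]
with no dependence whatsoever on the number of pieces $N$. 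This is the step your proposal needs and currently lacks.
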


The idea is the following. A first application of Theorem~\ref{thm:main} gives a subset $A' \subset A$ with $\mu(\EC(A',\epsilon)) \leq \delta^\epsilon$. Either $A'$ is large enough in which case we are done or we can cut $A'$ out of $A$ and apply Theorem~\ref{thm:main} again. This will give us another subset $A'$. Then we iterate until the union of these $A'$s is large enough.
\begin{proof}
Let $N\geq 0$ be an integer. Suppose we have already constructed $A_1,\dotsc,A_N$ such that $A_i^{(\delta)}$ are pairwise disjoint and $\mu(\EC(A_i,\epsilon)) \leq \delta^\epsilon$ for every $i = 1,\dotsc,N$. Either we have
\begin{equation}\label{eq:AprimeBIG}
\Ncov\bigl(A \setminus \bigcup_{i=1}^N A_i^{(2\delta)}\bigr) \leq \delta^{\frac{\epsilon}{2}}\Ncov(A),
\end{equation}
in which case we stop, or the set $A \setminus \bigcup_{i=1}^N A_i^{(2\delta)}$ satisfies both \eqref{eq:sizeA} and \eqref{eq:nonconA}. In the latter case Theorem~\ref{thm:main} gives us $A_{N+1} \subset A \setminus \bigcup_{i=1}^N A_i^{(2\delta)}$ with $\mu(\EC(A_{N+1},\epsilon)) \leq \delta^\epsilon$. By construction, $A_{N+1}^{(\delta)}$ is disjoint with any of $A_i^{(\delta)}$, $i = 1,\dotsc,N$. 

When this procedure ends write $A_0 = \bigcup_{i=1}^N A_i$. Then \eqref{eq:AprimeBIG} says $\Ncov(A \setminus A_0^{(2\delta)}) \leq \delta^\frac{\epsilon}{2} \Ncov(A)$. Moreover, by the disjointness of $A_1^{(\delta)}, \dotsc , A_N^{(\delta)}$, we have
\[\Ncov(A_0) = \sum_{i=1}^N \Ncov(A_i).\]
Set $a_i = \frac{\Ncov(A_i)}{\Ncov(A_0)}$. We claim that
\[\EC(A,\frac{\epsilon}{3}) \subset \bigcup_I \bigcap_{i \in I} \EC(A_i,\epsilon),\]
where the index set $I$ runs over subsets of $\ensA{N}$ with $\sum_{i \in I}a_i \geq \delta^\frac{\epsilon}{2}$. The desired upper bound \eqref{eq:unionAps} then follows immediately from Lemma~\ref{lm:smallCap}.

We now proceed to show the claim. Let $V \in \EC(A,\frac{\epsilon}{3})$. By definition, there exists $A' \subset A$ with $\Ncov(A') \geq \delta^\frac{\epsilon}{3} \Ncov(A)$ and $\Ncov(\pi_V(A')) \leq \delta^{-\frac{m}{n}\alpha - \frac{\epsilon}{3}}$. Consider the index set $I$ defined as
\[I = \{i \in \ensA{N} \mid \Ncov(A^{\prime(2\delta)}\cap A_i) \geq \delta^\epsilon \Ncov(A_i)\}.\]
We have, by Lemma~\ref{lm:coverLeb} and \eqref{eq:deltacap},
\begin{align*}
\Ncov(A') - \Ncov(A \setminus A_0^{(2\delta)}) &\leq \sum_{i=1}^n \Ncov(A' \cap A_i^{(2\delta)})\\
&\ll \sum_{i \in I} \Ncov(A_i) + \sum_{i \notin I} \Ncov(A^{\prime(2\delta)}\cap A_i)\\
&\ll \sum_{i\in I} a_i\Ncov(A) + \delta^\epsilon \Ncov(A)
\end{align*}
Hence $\sum_{i \in I}a_i \geq \delta^\frac{\epsilon}{2}$. On the other hand, for all $i \in I$, since
\[\Ncov(\pi_V(A^{\prime(2\delta)}\cap A_i)) \leq \Ncov(\pi_V(A')^{(2\delta)}) \ll \Ncov(\pi_V(A')),\]
we have
\[\Ncov(\pi_V(A^{\prime(2\delta)}\cap A_i)) \leq \delta^{-\frac{m}{n}\alpha - \epsilon}.\]
Hence $V \in \EC(A_i,\epsilon)$ for all $i \in I$. This finishes the proof of the claim.
\end{proof}

\subsection{Action of linear transformations}
\label{ss:actGL}
Clearly, all the assumptions and the conclusion of Theorem~\ref{thm:main} are invariant under the action of the orthogonal group $\grO(n)$. The next proposition states that the action of a $\delta^{-\epsilon}$-bi-Lipschitz linear transformation only affects them by a factor of $\delta^{O(\epsilon)}$. Here, while $f \in \GL(\R^n)$ acts on $\R^n$ in the usual way, it acts on the Grassmannian by multiplication by $f^\perp \coloneqq (\inv{f})^*$ or equivalently, $f^\perp V = (f V^\perp)^\perp$ for all $V \in \Gr(\R^n,m)$. 
\begin{lemm}\label{lm:GLaction}
Let $0 < m < n$ be dimensions. Let $\epsilon > 0$. Let $f \in \GL(\R^n)$ with $\norm{f} + \norm{\inv{f}} \leq \delta^{-\epsilon}$. Let $A$ be a bounded subset of $\R^n$ and $\mu$ a probability measure on $\Gr(\R^n,m)$.
\begin{enumerate}
\item For each of the conditions \eqref{eq:sizeA}--\eqref{eq:nonconmu} of Theorem~\ref{thm:proj}, if it holds for $A$ and $\mu$ with the parameters $\alpha$, $\kappa$ and $\epsilon$ then it also holds for the image set $f A$ and the image measure $f^\perp_* \mu$ with the parameters $\alpha$, $\kappa$ and $O(\epsilon)$ in the place of $\epsilon$.
\item \label{it:GLpifA} For all $V \in \Gr(\R^n,m)$, $\Ncov(\pi_{f^\perp V}(fA)) \leq \delta^{-O(\epsilon)}\Ncov(\pi_V(A))$.
\item We have $\mu(\EC(A,\epsilon)) \leq (f^\perp_* \mu)\bigl(\EC(f A ,O(\epsilon))\bigr)$. In particular, if the conclusion of Theorem~\ref{thm:main} holds for $fA$ and $f^\perp_* \mu$ with some $\epsilon' > 0$ in the place of $\epsilon$ then it holds for $A$ and $\mu$ with $\epsilon = \frac{\epsilon'}{O(1)}$.
\item For all $V \in \Gr(\R^n,m)$ and all $x \in f^\perp V$,
\[\Ncov\bigl(fA \cap \pi_{f^\perp V}^{-1}(x^{(\delta)})\bigr) \leq \delta^{-O(\epsilon)} \max_{y \in V} \Ncov\bigl(A \cap \pi_V^{-1}(y^{(\delta)})\bigr).\]
\end{enumerate}
\end{lemm}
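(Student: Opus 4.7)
The plan is to treat the four assertions in order, each a bookkeeping in the Lipschitz constants of $f$ and $f^{-1}$ combined with an analysis of how $f$ intertwines the projections $\pi_V$ and $\pi_{f^\perp V}$. For part~(i), conditions~\eqref{eq:sizeA} and~\eqref{eq:nonconA} transfer from $A$ to $fA$ via~\eqref{eq:sizefA}: since $f$ and $f^{-1}$ are $\delta^{-\epsilon}$-Lipschitz, one has $\Ncov(fA)\geq \delta^{O(\epsilon)}\Ncov(A)$, and the inclusion $fA\cap\Ball(x,\rho)\subset f(A\cap\Ball(f^{-1}x,\delta^{-\epsilon}\rho))$ lets one invoke~\eqref{eq:nonconA} at the valid scale $\delta^{-\epsilon}\rho\geq\delta$. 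For condition~\eqref{eq:nonconmu}, the key input is the wedge-product identity
\[
f^\perp v_1 \wedge \dotsb \wedge f^\perp v_m \wedge w_1 \wedge \dotsb \wedge w_{n-m} = (\det f)^{-1}\, v_1 \wedge \dotsb \wedge v_m \wedge f^T w_1 \wedge \dotsb \wedge f^T w_{n-m},
\]
which comes from the fact that $\Lambda^n f$ acts on $\Lambda^n\R^n$ by multiplication by $\det f$. After normalizing to orthonormal bases on each side (which costs a factor bounded by the Jacobians of $f^\perp|_V$ and $f^T|_W$, lying in $[\delta^{n\epsilon},\delta^{-n\epsilon}]$) and using $|\det f|\in[\delta^{n\epsilon},\delta^{-n\epsilon}]$, one gets $\dang(f^\perp V,W)\geq\delta^{O(\epsilon)}\dang(V,f^T W)$. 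Hence $(f^\perp_*\mu)(\Vang(W,\rho))\leq\mu(\Vang(f^T W,\delta^{-O(\epsilon)}\rho))\leq\delta^{-O(\epsilon)}\rho^\kappa$ by the hypothesis on $\mu$.

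For part~(ii), the observation is that $\pi_{f^\perp V}\circ f$ vanishes on $V^\perp$, because $(f^\perp V)^\perp=fV^\perp$ by the very definition $f^\perp V=(fV^\perp)^\perp$. Therefore $\pi_{f^\perp V}\circ f$ factors as $g\circ\pi_V$ for a linear map $g\colon V\to f^\perp V$ with $\norm{g}\leq\norm{f}\leq\delta^{-\epsilon}$; applying~\eqref{eq:sizefA} to $g$ yields $\Ncov(\pi_{f^\perp V}(fA))=\Ncov(g(\pi_V A))\leq\delta^{-O(\epsilon)}\Ncov(\pi_V A)$. Part~(iii) then follows by transport: given $V\in\EC(A,\epsilon)$ witnessed by $A'\subset A$, the set $fA'\subset fA$ satisfies $\Ncov(fA')\geq\delta^{O(\epsilon)}\Ncov(fA)$, and by~(ii) $\Ncov(\pi_{f^\perp V}(fA'))\leq\delta^{-O(\epsilon)}\Ncov(\pi_V A')<\delta^{-\frac{m}{n}\alpha-O(\epsilon)}$, so $f^\perp V\in\EC(fA,O(\epsilon))$. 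The `In particular' clause is obtained by applying this inclusion to $A''=f^{-1}((fA)')$, noting $fA''=(fA)'$, and using that $\EC(B,\cdot)$ is nondecreasing in its second argument to dominate $(f^\perp_*\mu)(\EC((fA)',O(\epsilon)))$ by $(f^\perp_*\mu)(\EC((fA)',\epsilon'))\leq\delta^{\epsilon'}\leq\delta^\epsilon$ once $\epsilon=\epsilon'/O(1)$.

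For part~(iv), the crucial point is that the map $g$ of~(ii) is bi-Lipschitz with constant $\delta^{-\epsilon}$: the symmetric argument shows $\pi_V\circ f^{-1}$ vanishes on $fV^\perp=(f^\perp V)^\perp$, so it factors as $h\circ\pi_{f^\perp V}$ with $\norm{h}\leq\norm{f^{-1}}\leq\delta^{-\epsilon}$, and the composition $h\circ g$ is readily seen to equal $\id_V$, giving $g^{-1}=h$. Now from $\pi_{f^\perp V}\circ f=g\circ\pi_V$ we get $fA\cap\pi_{f^\perp V}^{-1}(x^{(\delta)})=f\bigl(A\cap\pi_V^{-1}(g^{-1}(x^{(\delta)}))\bigr)$, and $g^{-1}(x^{(\delta)})\subset V$ is contained in a ball of radius $\delta^{1-\epsilon}$ in $V$, hence covered by $O(\delta^{-m\epsilon})$ many balls $y_j^{(\delta)}$; partitioning $A\cap\pi_V^{-1}(g^{-1}(x^{(\delta)}))$ along these fibers and applying~\eqref{eq:sizefA} to $f$ gives the claim. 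I expect the main obstacle to be~(i) for~\eqref{eq:nonconmu}, where the wedge-product identity and Jacobian accounting must be set up cleanly; beyond that the only genuinely new observation is the formula $g^{-1}=\pi_V\circ f^{-1}|_{f^\perp V}$, which yields bi-Lipschitz control on $g$ without any extra angle hypothesis on $V$.
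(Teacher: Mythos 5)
Your proof is correct and follows essentially the same route as the paper: all four parts rest on \eqref{eq:sizefA}, the identity $(f^\perp V)^\perp = fV^\perp$, and a $\delta^{O(\epsilon)}$ distortion bound for $\dang$ under $f^\perp$. Your determinant identity for \eqref{eq:nonconmu} and the factorization $\pi_{f^\perp V}\circ f = g\circ\pi_V$ with $g^{-1}=\pi_V\circ \inv{f}|_{f^\perp V}$ are clean repackagings of the paper's exterior-power norm estimate and its explicit fiber inclusions, so the substance is identical.
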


\begin{proof}
The statement about the conditions \eqref{eq:sizeA} and \eqref{eq:nonconA} follows immediately from the inequality \eqref{eq:sizefA}. As for the condition~\eqref{eq:nonconmu}, it suffices to prove that for all $W \in \Gr(\R^n,n-m)$ and all $\rho \geq \delta$, 
\begin{equation}\label{eq:GLonGr}
f^\perp_*\mu\bigl(\Vang(f^\perp W,\rho)\bigr) \leq \mu\bigl(\Vang(W,\delta^{-O(\epsilon)}\rho)\bigr).
\end{equation}
From the Cartan decomposition of $f$, we see easily that $\forall r = 1, \dotsc, n$, $\norm{\bigwedge^r f^\perp} + \norm{(\bigwedge^r f^\perp)^{-1}} \leq \delta^{-O(\epsilon)}$. For $V \in \Gr(\R^n,m)$, let $\mathbf{v}$ be the wedge product of an orthonormal basis of $V$ and $\mathbf{w}$ that of $W$. We have
\begin{align*}
\dang(f^\perp V,f^\perp W) &= \frac{\norm{(\bigwedge^n f^\perp)(\mathbf{v}\wedge \mathbf{w})}}{\norm{(\bigwedge^m f^\perp)\mathbf{v}}\,\norm{(\bigwedge^{n-m} f^\perp) \mathbf{w}} }\\
&\geq \frac{\norm{(\bigwedge^n f^\perp)^{-1}}^{-1} \norm{\mathbf{v} \wedge \mathbf{w}}}{\norm{\bigwedge^m f^\perp}\,\norm{\bigwedge^{n-m} f^\perp}}\\
&\geq \delta^{O(\epsilon)}\dang(V,W).
\end{align*}
Hence $f^\perp V \in \Vang(f^\perp W,\rho)$ implies $V \in \Vang(W,\delta^{-O(\epsilon)}\rho)$, which establishes \eqref{eq:GLonGr}.

For the second statement, observe that there is a finite set $\tilde A$ of cardinality $\abs{\tilde A} = \Ncov(\pi_V (A))$ such that
\[ A \subset \tilde A + V^{\perp} + \Ball(0,\delta).\]
Applying $f$ and then $\pi_{f^\perp V}$ on both sides, we obtain 
\[\pi_{f^\perp V}(fA) \subset \pi_{f^\perp V}(f\tilde A) + \Ball(0,\delta^{1-\epsilon}).\]
This proves that $\Ncov[\delta^{1-\epsilon}](\pi_{f^\perp V}(f A)) \leq \Ncov(\pi_V (A))$. We conclude by the scale change estimate~\eqref{eq:deltaPrim}.

For the next statement, it suffices to prove that $f^\perp V \in \EC(f A, O(\epsilon))$ whenever $V \in \EC(A,\epsilon)$. Indeed, let $V \in \EC(A,\epsilon)$. Then there exists $A' \subset A$ such that $\Ncov(A') \geq \delta^\epsilon\Ncov(A)$ and $\Ncov(\pi_V (A')) < \delta^{-\frac{m}{n}\alpha - \epsilon}$. On the one hand, by \eqref{eq:sizefA}, we have $\Ncov(fA') \geq \delta^{O(\epsilon)}\Ncov(fA)$. On the other hand, from \ref{it:GLpifA} it follows that $\Ncov(\pi_{f^\perp V}(f A')) \leq \delta^{-\frac{m}{n}\alpha - O(\epsilon)}$. Hence $f^\perp V \in \EC(fA,O(\epsilon))$.

For the last statement, it suffices to prove that for any $x \in f^\perp V$, there exists $y \in V$ such that
\begin{equation}\label{eq:fAcapfperp}
fA \cap \pi_{f^\perp V}^{-1}(x^{(\delta)}) \subset f\bigl( A \cap \pi_V^{-1}(y^{(\delta^{1-\epsilon})})\bigr).
\end{equation}
Indeed, if $a \in A$ satisfies $\pi_{f^\perp V}(f(a)) \in x^{(\delta)}$, then
\[f(a) \in x + f V^\perp +\Ball(0,\delta).\]
Applying $f^{-1}$ and then $\pi_V$ on both sides, we obtain
\[\pi_V(a) \in \pi_V(f^{-1}(x)) + \Ball(0,\delta^{1-\epsilon}).\]
This proves \eqref{eq:fAcapfperp} with $y = \pi_V(f^{-1}(x))$.
\end{proof}

\subsection{Non-concentration property for projections}
Let $A$ a subset of $\R^n$ as in Theorem~\ref{thm:main}. We want to understand whether a projection of $A$ still satisfies some similar regularity property as $A$ does. More precisely we want to find $V \in \Gr(\R^n,m)$ and a large subset $A'$ of $A$ such that 
\begin{equation*}
\forall \rho \geq \delta,\; \forall x \in V,\quad \Ncov(\pi_V(A')\cap x^{(\rho)}) \leq \rho^{\kappa_1} \delta^{-\frac{m}{n}\alpha - \epsilon'},
\end{equation*}
for some $\kappa_1 > 0$ proportional to $\kappa$ and some $\epsilon' > 0$ proportional to $\epsilon$. 

In the special case where $m$ divides $n$, we have the following result. We will only need this non-concentration result in this special case, although it might be true in a more general context.
\begin{lemm}\label{lm:NCproj}
Let $n = qm$ with $q \geq 2$. For any parameters $0 < \alpha < n$, $\kappa > 0$ and $\epsilon > 0$, the following is true for $\delta > 0$ sufficiently small. If $A$ is a subset of $\R^n$ contained in the unit ball and $\mu$ is a probability measure on $\Gr(\R^n,m)$ satisfying the assumptions \eqref{eq:sizeA}--\eqref{eq:nonconmu} for the parameters $\alpha$, $\kappa$ and $\epsilon$, then
\[\mu(\EC(A) \setminus \ECreg(A)) \leq \delta^{2\epsilon},\]
where $\ECreg(A)$ denotes the set of all $V \in \EC(A)$ such that $\exists A' \subset A$ with $\Ncov(A') \geq \delta^{2\epsilon}\Ncov(A)$ and $\Ncov(\pi_V(A')) \leq \delta^{-\frac{m}{n}\alpha -\epsilon}$ and
\begin{equation}\label{eq:NCprojA}
\forall \rho \geq \delta,\; \forall x \in V,\quad \Ncov(\pi_V(A')\cap x^{(\rho)}) \leq \rho^{\frac{\kappa}{2q^2}} \delta^{-\frac{m}{n}\alpha - 10\epsilon}.
\end{equation}
\end{lemm}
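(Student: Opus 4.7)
The plan is to regularize an arbitrary initial witness for $V\in\EC(A)$ by discarding pre-images of overly concentrated balls in $\pi_V(A'_V)$, showing that this regularization either yields a regular witness or forces $V$ into a thin neighborhood of a Schubert cycle, which is controlled by \eqref{eq:nonconmu}.

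First, for each $V\in\EC(A)$ fix an initial witness $A'_V\subset A$ with $\Ncov(A'_V)\ge\delta^{\epsilon}\Ncov(A)$ and $\Ncov(\pi_V(A'_V))\le\delta^{-\frac{m}{n}\alpha-\epsilon}$. For each dyadic scale $\rho\in[\delta,1]$ call a ball $B=x^{(\rho)}\subset V$ \emph{bad at $\rho$} if
\[\Ncov(\pi_V(A'_V)\cap B)>\rho^{\kappa/(2q^2)}\delta^{-\frac{m}{n}\alpha-10\epsilon};\]
since the total projection has size $\le\delta^{-\frac{m}{n}\alpha-\epsilon}$, there are at most $\delta^{9\epsilon}\rho^{-\kappa/(2q^2)}$ bad balls at each scale. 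Let $A''_V$ be $A'_V$ with the pre-images of all bad balls (at all $O(\log(1/\delta))$ dyadic scales) removed; by construction, $\pi_V(A''_V)$ inherits \eqref{eq:NCprojA}, with the $-10\epsilon$ exponent absorbing the polylog constants. Thus $V\in\ECreg(A)$ as soon as $\Ncov(A''_V)\ge\delta^{2\epsilon}\Ncov(A)$.

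Next, for $V\in\EC(A)\setminus\ECreg(A)$ the removed set $A'_V\setminus A''_V$ must carry all but a $\delta^{\epsilon}$-fraction of $\Ncov(A'_V)$. A dyadic pigeonhole across the $O(\log(1/\delta))$ scales produces a single scale $\rho=\rho(V)$ at which the bad slices account for a significant fraction of $A'_V$; a further pigeonhole on the at most $\delta^{9\epsilon}\rho^{-\kappa/(2q^2)}$ bad balls at this scale gives a single slab $\pi_V^{-1}(x_V^{(\rho)})$ with $\Ncov(A\cap\pi_V^{-1}(x_V^{(\rho)}))\gtrsim\delta^{O(\epsilon)}\rho^{\kappa/(2q^2)}\Ncov(A)$. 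The hypothesis \eqref{eq:nonconA}, after covering the slab $\pi_V^{-1}(x_V^{(\rho)})\cap\Ball(0,1)$ by $O(\rho^{-(n-m)})$ Euclidean balls of radius $\rho$, supplies the matching upper bound $\Ncov(A\cap\pi_V^{-1}(x_V^{(\rho)}))\le\rho^{\kappa-(n-m)}\delta^{-\epsilon}\Ncov(A)$, so the two bounds can only coexist in a restricted range of $\rho$.

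At such a scale, the existence of a concentrated slice translates into an alignment of $V^\perp$ with an $(n-m)$-plane prescribed by where $A$ has its excess mass; this places $V$ in a $\rho^{\Omega(\kappa/q^2)}$-neighborhood of a Schubert cycle $\Vang(W,\cdot)$ for some $W\in\Gr(\R^n,n-m)$. The $\mu$-measure of this neighborhood is bounded by \eqref{eq:nonconmu}, and summing over the $O(\log(1/\delta))$ dyadic scales (and taking $\epsilon$ sufficiently small in terms of $\alpha,\kappa,n,m$) yields $\mu(\EC(A)\setminus\ECreg(A))\le\delta^{2\epsilon}$.

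The main obstacle will be the alignment step: extracting a clean Schubert-cycle condition on $V$ from a single concentrated slab. This is where the specific exponent $\kappa/(2q^2)$ seems tailored, balancing the Frostman loss $\rho^\kappa$ of \eqref{eq:nonconA} against the concentration threshold $\rho^{\kappa/(2q^2)}$ so that the resulting neighborhood of a Schubert cycle is $\mu$-small enough, via \eqref{eq:nonconmu}, to close the estimate after the dyadic sum over scales.
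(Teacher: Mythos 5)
Your first two paragraphs are fine and essentially reproduce the paper's first step in a slightly different packaging: instead of trimming small fibers, you trim bad balls in the projection, and a double pigeonhole then gives, for each $V\in\EC(A)\setminus\ECreg(A)$, a single cylinder $\pi_V^{-1}(x_V^{(\rho)})$ with $\Ncov\bigl(A\cap\pi_V^{-1}(x_V^{(\rho)})\bigr)\geq \delta^{-O(\epsilon)}\rho^{\kappa/(2q^2)}\Ncov(A)$ (up to harmless logarithmic losses). This matches the paper's intermediate claim \eqref{eq:bigCyl}.

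The gap is in your third paragraph, which is exactly the hard part of the lemma and is asserted rather than proved. A single concentrated cylinder does \emph{not} "translate into an alignment of $V^\perp$ with an $(n-m)$-plane prescribed by $A$": the hypothesis \eqref{eq:nonconA} only forbids concentration of $A$ in \emph{balls}, not in slabs, so $A$ may legitimately put a huge fraction of its mass in a cylinder of radius $\rho$ — your own upper bound $\rho^{\kappa-(n-m)}\delta^{-\epsilon}\Ncov(A)$ exceeds $\Ncov(A)$ and gives no constraint. Moreover, even if each bad $V$ were close to some Schubert cycle $\Vang(W_V,\cdot)$, the plane $W_V$ depends on $V$, so \eqref{eq:nonconmu} (which controls a \emph{fixed} $W$) cannot be summed over the uncountably many cycles that could arise; the final estimate does not close. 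The paper's mechanism is different and is what you are missing: assuming $\mu(\EC(A)\setminus\ECreg(A))\geq\delta^{2\epsilon}$, it draws $q$ independent subspaces $V_1,\dotsc,V_q$ from $\mu$, uses Lemma~\ref{lm:bigCap} to find a point where the $q$ cylinders (one per $V_i$, all at a common pigeonholed scale $\rho$) simultaneously contain a $\rho^{2q\kappa_1}\delta^{-O(\epsilon)}$-fraction of $\lambda(A^{(\delta)})$, uses \eqref{eq:nonconmu} and \eqref{eq:dangExpand} to ensure $\dang(V_1,\dotsc,V_q)\geq\rho^{(q-1)/q}$ with high probability, and then applies Lemma~\ref{lm:Cylinders} to place the intersection of these transverse cylinders inside a single ball of radius $\approx q\rho^{1/q}$, to which \eqref{eq:nonconA} finally applies and yields the contradiction. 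This also explains the exponent: with $\kappa_1=\kappa/(2q^2)$ one has $\rho^{2q\kappa_1}=\rho^{\kappa/q}$, which is exactly the non-concentration bound for a ball of radius $\rho^{1/q}$ — not a balance against $\rho^\kappa$ for a single slab, as you conjecture. Without this intersection-of-cylinders argument (or a genuine substitute), your proof does not go through.
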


The idea of the proof is the following. When $V \in \EC(A)$, there is a large subset $A'$ with small projection to $V$. We then remove small fibers of the projection $\pi_V : A' \to V$ to get $A''$. Any large subset in of $\pi_V(A'')$ will have large preimage by $\pi_V$. Thus if $V \notin \ECreg(A)$ then there will be a cylinder with axis $V^\perp$ and radius $\rho$ in which $A$ is very dense. If there are a lot of such $V$ we can then intersect these cylinders to get a ball of radius $\rho^{\Inv{q}}$ which will contradict the non-concentration property \eqref{eq:nonconA} of $A$.

\begin{proof}
For conciseness, write $\kappa_1 = \frac{\kappa}{2q^2}$. We claim that if $V \in \EC(A) \setminus \ECreg(A)$ then there exists $x \in V$ and $\rho \geq \delta$ such that
\begin{equation}\label{eq:bigCyl}
\Ncov\bigl(A \cap \pi_{V}^{-1}(x^{(\rho)})\bigr) \geq \rho^{\kappa_1}\delta^{- 6\epsilon} \Ncov(A).
\end{equation}

Indeed, let $V \in \EC(A) \setminus \ECreg(A)$. Then from the definition~\eqref{eq:ECAeps} there exists $A' \subset A$ with $\Ncov(A') \geq \delta^{\epsilon}\Ncov(A)$ and 
$\Ncov(\pi_V(A')) \leq \delta^{-\frac{\alpha}{q} - \epsilon}$. Now we remove small fibers of the map $\pi_V$ restricted to $A'$. Consider the set
\[B = \bigl\{y \in V \mid \Ncov\bigl(A' \cap \pi_V^{-1}(y^{(\delta)})\bigr) \geq \delta^{\frac{\alpha}{q} + 3\epsilon}\Ncov(A)\bigr\}\]
and $A'' = A' \cap \pi_V^{-1}(B^{(\delta)})$. We have, for all $y \in V$,
\[\Ncov\bigl((A'\setminus\!A'') \cap \pi_V^{-1}(y^{(\delta)})\bigr) \leq \delta^{\frac{\alpha}{q} + 3\epsilon}\Ncov(A).\]
for otherwise $y$ would belong to $B$ and the intersection $(A'\setminus\! A'') \cap \pi_V^{-1}(y^{(\delta)})$ would be empty. Consequently,
\[\Ncov(A'\setminus\! A'') \leq \Ncov(\pi_V(A')) \max_{y \in V} \Ncov\bigl((A'\setminus\! A'') \cap \pi_V^{-1}(y^{(\delta)})\bigr) \leq\delta^{2\epsilon}\Ncov(A).\]
It follows that $\Ncov(A'') \geq \delta^{2\epsilon}\Ncov(A)$. But $V \notin \ECreg(A)$, the non-concentration property~\eqref{eq:NCprojA} fails for $\pi_V(A'')$ : there exists $x \in V$ and $\rho \geq \delta$ such that
\begin{equation}\label{eq:Vnotreg}
\Ncov\bigl(\pi_V(A'')\cap x^{(\rho)}\bigr) \geq \rho^{\kappa_1} \delta^{-\frac{\alpha}{q}-10\epsilon}.
\end{equation}
Let $\tilde B$ be a maximal $6\delta$-separated subset of $\pi_V(A'')\cap x^{(\rho)}$. From \eqref{eq:coverLeb} and \eqref{eq:Vnotreg}, we have $\abs{\tilde B} \gg \rho^{\kappa_1} \delta^{-\frac{\alpha}{q}-10\epsilon}$. Moreover for all $y \in \tilde B$, by the definition of $A''$, $y \in B^{(\delta)}$, hence $\Ncov\bigl(A' \cap \pi_{V}^{-1}(y^{(2\delta)})\bigr) \geq \delta^{\frac{\alpha}{q} + 3\epsilon}\Ncov(A)$. Since $\tilde B$ is $6\delta$-separated, all the balls $y^{(2\delta)}$ with center $y \in \tilde B$ are $2\delta$-away from each other. Consequently,
\begin{align*}
\Ncov\bigl(A' \cap \pi_{V}^{-1}(x^{(\rho + 2\delta)})\bigl) &\geq \sum_{y \in \tilde B} \Ncov\bigl(A'\cap \pi_{V}^{-1}(y^{(2\delta)})\bigr)\\
&\geq \abs{\tilde B} \delta^{\frac{\alpha}{q} + 3\epsilon}\Ncov(A) \geq (\rho+2\delta)^{\kappa_1}\delta^{- 6\epsilon}\Ncov(A).
\end{align*}
This finishes the proof of the claim.

To obtain a contradiction, suppose that $\mu(\EC(A) \setminus \ECreg(A)) \geq \delta^{2\epsilon}$. Note that the radius $\rho$ in the claim depends on $V$. Nevertheless, from \eqref{eq:bigCyl} we know that it ranges from $\delta$ to $\delta^{\frac{6\epsilon}{\kappa_1}}$. For the argument below, we want \eqref{eq:bigCyl} to hold for a lot of $V \in \EC(A) \setminus \ECreg(A)$ with some radius $\rho \geq \delta$ independent of $V$. Indeed, by a simple pigeonhole argument\footnote{Arrange different $\rho$ into intervals of the form $[\delta^{2^{-k}},\delta^{2^{-k-1}}]$, where $0 \leq k \ll -\log(\epsilon)$.}, we can find a subset $\DC \subset \EC(A) \setminus \ECreg(A)$ and a radius $\rho \geq \delta$ such that $\mu(\DC) \geq \delta^{3\epsilon}$ and for all $V \in \DC$, there exists $x \in V$ such that
\[\Ncov(A \cap \pi_{V}^{-1}(x^{(\rho)})) \geq \rho^{2\kappa_1}\delta^{-5\epsilon} \Ncov(A)\]
and hence, by Lemma~\ref{lm:coverLeb},
\[\lambda(A^{(\delta)} \cap \pi_{V}^{-1}(x^{(\rho)})) \geq \rho^{2\kappa_1}\delta^{-4\epsilon} \lambda(A^{(\delta)}).\]

Let $V_1,\dotsc,V_q$ be random elements of $\Gr(\R^n,m)$ independently distributed according to $\mu$. On the one hand, from Lemma~\ref{lm:bigCap} applied to the restriction of $\mu$ to $\DC$, it follows that with probability at least $\Inv{2}(\rho^{2\kappa_1}\delta^{-4\epsilon})^q \mu(\DC)^q \geq \Inv{2}\rho^{2q\kappa_1}\delta^{-q\epsilon}$, there exists $x_1 \in V_1, \dotsc , x_q \in V_q$ such that
\begin{equation}\label{eq:interCyl}
\lambda\bigl(A^{(\delta)} \cap \pi_{V_1}^{-1}(x_1^{(\rho)}) \cap \dotsb \cap \pi_{V_q}^{-1}(x_q^{(\rho)})\bigr) \geq \Inv{2}\rho^{2q\kappa_1}\delta^{-4q\epsilon} \lambda(A^{(\delta)}).
\end{equation}

On the other hand, from \eqref{eq:dangExpand} and \eqref{eq:nonconmu}, it follows that with probability at least $1 - (q-1)\delta^{-\epsilon}\rho^\frac{\kappa}{q}$, we have
\begin{equation}\label{eq:dangq1q}
\dang(V_1,\dotsc,V_q) \geq \rho^{\frac{q-1}{q}}.
\end{equation}

Now with our choice of $\kappa_1$, we have $1 - (q-1)\delta^{-\epsilon}\rho^\frac{\kappa}{q} + \Inv{2}\rho^{2q\kappa_1}\delta^{-q\epsilon} > 1$. This means that for some $(V_1,\dotsc,V_q)$, both \eqref{eq:interCyl} and \eqref{eq:dangq1q} hold. By Lemma~\ref{lm:Cylinders}, there exists $x \in \R^n$ such that 
\[\pi_{V_1}^{-1}(x_1^{(\rho)}) \cap \dotsb \cap \pi_{V_q}^{-1}(x_q^{(\rho)}) \subset x^{(\rho')}\]
with $\rho' = q \rho \dang(V_1,\dotsc,V_q)^{-1} \leq q\rho^{\Inv{q}}$. Then the non-concentration property~\eqref{eq:nonconA} of $A$ implies that
\[\lambda\bigl(A^{(\delta)} \cap x^{(\rho')}\bigr) \ll \delta^{-\epsilon}\rho^{\frac{\kappa}{q}}\lambda(A^{(\delta)}).\]
Combining this with \eqref{eq:interCyl} yields
\[\rho^{2q\kappa_1}\delta^{-4q\epsilon} \ll \delta^{-\epsilon}\rho^{\frac{\kappa}{q}},\]
which is impossible with our choice of $\kappa_1$.
\end{proof}

\subsection{Non-concentration property for slices}
We shall also consider slices of $A$, i.e. intersection of $A$ with a $\delta$-neighborhood of a affine subspace. When $n = qm$, we have similar non-concentration results for $(n- m)$\dash{}dimensional slices of $A$.
\begin{lemm}\label{lm:NCslice}
Let $n = qm$ with $q \geq 2$ a positive integer. Let $0< \alpha < n$, $\kappa > 0$ and $\epsilon > 0$ be parameters. If the statement in Theorem~\ref{thm:main} fails for the set $A$, then there is a $(n-m)$\dash{}dimensional affine subspace $y + W$ and a subset $B \subset A^{(\delta)}\cap (y + W)$ such that
\[\Ncov(B) \geq \delta^{-\beta+O(\epsilon)} \text{ and}\]
\begin{equation}\label{eq:Breg}
\forall \rho \geq \delta,\; \forall x \in W \quad \Ncov(B\cap x^{(\rho)}) \leq \rho^\frac{\kappa}{2q^2} \delta^{-\beta - O(\epsilon)},
\end{equation}
where $\beta = \frac{q-1}{q}\alpha$.
\end{lemm}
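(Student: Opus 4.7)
The plan is to combine Lemma~\ref{lm:NCproj} with an argument closely modeled on its proof. Since Theorem~\ref{thm:main} fails for $A$, the choice $A' = A$ in the negation of its conclusion gives $\mu(\EC(A)) > \delta^\epsilon$, and Lemma~\ref{lm:NCproj} then yields $\mu(\ECreg(A)) \geq \delta^\epsilon/2$. So there are plenty of subspaces $V \in \ECreg(A)$, and for each of them we have a witnessing subset $A_V' \subset A$ with $\Ncov(A_V') \geq \delta^{2\epsilon}\Ncov(A) \geq \delta^{-\alpha + 3\epsilon}$, a small projection $\Ncov(\pi_V(A_V')) \leq \delta^{-m\alpha/n - \epsilon}$, and the non-concentration estimate~\eqref{eq:NCprojA} on $\pi_V(A_V')$.

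Fix such a $V$. Since $\Ncov(A_V')/\Ncov(\pi_V(A_V')) \geq \delta^{-\beta + O(\epsilon)}$, a pigeonhole on the fibers of $\pi_V$ restricted to $A_V'$ produces some $x_0 \in V$ for which the fiber
\[
F := A_V' \cap \pi_V^{-1}\bigl(x_0^{(\delta)}\bigr)
\]
satisfies $\Ncov(F) \geq \delta^{-\beta + O(\epsilon)}$. Set $W = V^\perp$ and pick any $y \in \R^n$ with $\pi_V(y) = x_0$, so that $F$ is contained in the $\delta$-neighborhood of the affine subspace $y + W$. Projecting $F$ orthogonally onto $y + W$ then gives a candidate set $B \subset A^{(\delta)} \cap (y + W)$ with $\Ncov(B) \gg_n \Ncov(F) \geq \delta^{-\beta + O(\epsilon)}$, delivering the desired lower bound.

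The real difficulty is guaranteeing the non-concentration estimate~\eqref{eq:Breg}. I would argue by contradiction, in the spirit of the end of the proof of Lemma~\ref{lm:NCproj}. Assume that no valid slice $B$ can be extracted from any large pigeonhole fiber of any $V \in \ECreg(A)$. After trimming small sub-fibers, exactly as in the passage from $A'$ to $A''$ in Lemma~\ref{lm:NCproj}, the failure of~\eqref{eq:Breg} forces, for each such $V$, a position $z_V \in \R^n$ and a radius $\rho_V \geq \delta$ with
\[
\Ncov\bigl(A \cap \pi_V^{-1}(x_V^{(\delta)}) \cap z_V^{(\rho_V)}\bigr) \geq \rho_V^{\kappa/(2q^2)}\,\delta^{-\beta - O(\epsilon)}.
\]
This is a heavy ``pancake'' of thickness $\delta$ in the $V$-direction and width $\rho_V$ in $V^\perp$. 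A standard dyadic pigeonhole in $\rho_V$ then fixes a common scale $\rho$ on a $\mu$-set of $V$'s of mass $\geq \delta^{O(\epsilon)}$.

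The main obstacle will be combining $q$ such pancakes from independent $V_1, \dotsc, V_q$ in this $\mu$-set to reach a contradiction with the non-concentration hypothesis~\eqref{eq:nonconA} of $A$. Equations~\eqref{eq:dangExpand} and~\eqref{eq:nonconmu} ensure general position $\dang(V_1, \dotsc, V_q) \geq \rho^{(q-1)/q}$ with large probability; Lemma~\ref{lm:bigCap} applied to the $q$ heavy pancakes then forces their intersection to carry $A$-mass at least $\rho^{q\kappa/(2q^2)}\,\delta^{-q\beta - O(\epsilon)}$ on a positive-probability event. On the other hand, Lemma~\ref{lm:Cylinders} confines this intersection to a ball of radius $\lesssim \rho/\dang(V_1,\dotsc,V_q) \lesssim \rho^{1/q}$, whose $A$-mass is capped by $\delta^{-\epsilon}\rho^{\kappa/q}\Ncov(A)$ via~\eqref{eq:nonconA}. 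The exponent $\kappa_1 = \kappa/(2q^2)$ is picked precisely so that these two bounds are incompatible once all $\delta^{O(\epsilon)}$ factors are balanced, yielding the contradiction that closes the argument. Setting up the book-keeping of the $\delta^{O(\epsilon)}$ losses and verifying that $q^2\kappa_1 < \kappa/q$ with room to spare (which is why the factor $2q^2$ appears) is where the bulk of the technical work sits.
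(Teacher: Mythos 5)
Your extraction of the large slice is fine: picking $V\in\ECreg(A)$ with witness $A'_V$, pigeonholing a fiber $F=A'_V\cap\pi_V^{-1}(x_0^{(\delta)})$ of size $\geq\delta^{-\beta+O(\epsilon)}$ and slicing along $W=V^\perp$ gives the lower bound on $\Ncov(B)$, exactly as the paper does (there with the last subspace $V_q$ of a chain). The gap is in the second half, which is the heart of the lemma: your intersection-of-pancakes contradiction does not close quantitatively. Each pancake $A\cap\pi_{V}^{-1}(x_V^{(\delta)})\cap z_V^{(\rho)}$ has covering number about $\rho^{\kappa_1}\delta^{-\beta-O(\epsilon)}$ with $\kappa_1=\kappa/(2q^2)$, hence density in $A$ at most $\rho^{\kappa_1}\delta^{\alpha/q-O(\epsilon)}$ -- a genuine power of $\delta$, not merely $\delta^{O(\epsilon)}$, because a single $\delta$\dash{}thin fiber already carries only a $\delta^{\alpha/q}$\dash{}fraction of $A$. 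Applying Lemma~\ref{lm:bigCap} with this $K\approx\rho^{-\kappa_1}\delta^{-\alpha/q+O(\epsilon)}$ yields an intersection of density $\geq 1/(2K^q)\approx\rho^{q\kappa_1}\delta^{\alpha-O(\epsilon)}$, i.e.\ covering number about $\rho^{q\kappa_1}\delta^{-O(\epsilon)}$, which is vacuous; your stated bound $\rho^{q\kappa_1}\delta^{-q\beta-O(\epsilon)}$ omits the factor $\Ncov(A)^{1-q}\approx\delta^{q\beta}$ (for $q\geq3$ and $\alpha$ close to $n$ it even exceeds the trivial cap $\delta^{-n}$, which signals the miscount). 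Moreover the event produced by Lemma~\ref{lm:bigCap} has probability only about $\rho^{q\kappa_1}\delta^{\alpha+O(\epsilon)}$, which cannot be forced to meet the general-position event whose failure probability is $\delta^{-O(\epsilon)}\rho^{\kappa/q}$: one would need $\rho^{\kappa/q-q\kappa_1}<\delta^{\alpha-O(\epsilon)}$, i.e.\ $\rho^{\kappa/(2q)}<\delta^{\alpha-O(\epsilon)}$, which fails for every admissible $\rho\geq\delta$ as soon as $\kappa<2q\alpha$ (the typical regime). The trick of Lemma~\ref{lm:NCproj} works only because there the heavy objects are full cylinders $\pi_V^{-1}(x^{(\rho)})$ of density $\rho^{O(\kappa_1)}\delta^{-O(\epsilon)}$ in $A$, with no $\delta$\dash{}power loss; you pay $\delta^{\alpha/q}$ per pancake, $q$ times, and no choice of $\kappa_1$ repairs that. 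Note also that $V\in\ECreg(A)$ gives regularity of the projection $\pi_V(A'_V)$, which says nothing about concentration \emph{inside} a fiber of $\pi_V$ (the fiber lives in $V^\perp$), so the single subspace $V$ you work with simply does not carry the information needed for \eqref{eq:Breg}.

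The paper proves \eqref{eq:Breg} positively rather than by contradiction, and this is where the failure of Theorem~\ref{thm:main} for \emph{every} subset of $A$ is used in an essential way: since $\mu(\EC(A_j))>\delta^\epsilon$ at each step, one can build a chain $V_1\in\ECreg(A)$, then $V_2,\dotsc,V_q$ with nested witnesses $A_1\supset\dotsb\supset A_q$ such that $\dang(V_j,V_1+\dotsb+V_{j-1})\geq\delta^{\epsilon_1}$ and $\Ncov(\pi_{V_j}(A_j))\leq\delta^{-\alpha/q-\epsilon}$. The slice is then taken along $W=V_q^\perp$ through a heavy fiber of $\pi_{V_q}$, and its non-concentration at every scale follows deterministically: a ball $x^{(\rho)}$ in $W$ is transferred to $V=V_1+\dotsb+V_{q-1}$ via Lemma~\ref{lm:fromVp2V} (using $\dang(V,V_q)\geq\delta^{O(\epsilon)}$), and Lemma~\ref{lm:tiltCart} bounds $\Ncov(\pi_V(A_q)\cap x_0^{(2\rho)})$ by $\Ncov(\pi_{V_1}(A_q)\cap x_1^{(2\rho)})\prod_{j=2}^{q-1}\Ncov(\pi_{V_j}(A_q))$, where the first factor is controlled by the regularity \eqref{eq:NCprojA} of $V_1$ and the others by their smallness. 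In short, the non-concentration inside the slice must come from the \emph{other} subspaces of a chain in general position, not from a probabilistic intersection argument at the slice level; to repair your proof you would need to set up this chain construction.
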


Here is an outline of the proof. The negation of Theorem~\ref{thm:main} to $A$ implies that there is a large subset $A_q \subset A$ occupying a large portion of the Cartesian product $\prod_{j=1}^q \pi_{V_j}(A_q)$ of its projections to $q$ subspaces in nearly orthogonal position. Then, because of Lemma~\ref{lm:NCproj}, the first factor $\pi_{V_1}(A_q)$ can be chosen to have the non-concentration property. This in turn will imply the non-concentration property of the projection of $A_q$ to $V_1 + \dotsb + V_{q-1}$. Then it would suffice to find a slice whose projection to $V_1 + \dotsb + V_{q-1}$ is nearly as large as that of $A_q$, which can be easily done given the negation of Theorem 6. 

\begin{proof} 
Suppose the statement in Theorem~\ref{thm:main} fails for the set $A\subset \R^n$. This means $\mu(\EC(A')) > \delta^\epsilon$ for any subset $A' \subset A$. In particular, $\ECreg(A)$ is non-empty by Lemma~\ref{lm:NCproj}. Let $V_1 \in \ECreg(A)$. There exists $A_1 \subset A$ with $\Ncov(A_1) \geq \delta^{-\alpha + 3\epsilon}$ and
\begin{equation}\label{eq:A1reg}
\forall \rho \geq \delta,\; \forall x \in V_1, \quad \Ncov(\pi_{V_1}(A_1)\cap x^{(\rho)}) \leq \rho^\frac{\kappa}{2q^2} \delta^{-\frac{1}{q}\alpha - 10\epsilon}.
\end{equation} 

Let $\epsilon_1 = \frac{3\epsilon}{\kappa}$. We construct by a simple induction a sequence of subspaces $V_2,\dotsc,V_q$ and a nested sequence of subsets $A_1 \supset \dotsb \supset A_q$ satisfying for any $j = 2, \dotsc, q$,
\begin{gather}
\label{eq:VjV1+}
\dang(V_j,V_1 + \dotsb + V_{j-1}) \geq \delta^{\epsilon_1},\\
\Ncov(A_j) \geq \delta^\epsilon \Ncov(A_{j-1}),\nonumber\\
\label{eq:VjAj<}
\Ncov(\pi_{V_j}(A_j)) \leq \delta^{-\frac{1}{q}\alpha - \epsilon}.
\end{gather}
This is possible since at each step, we have by \eqref{eq:nonconmu},
\[\mu\bigl(\EC(A_{j-1}) \setminus \Vang(V_1 + \dotsb + V_{j-1},\delta^{\epsilon_1})\bigr) \geq \delta^\epsilon - \delta^{2\epsilon} > 0.\]

From the fact that 
\[\Ncov(A_q) \leq \Ncov(\pi_{V_q}(A_q)) \max_{y \in V_q} \Ncov(A_q \cap \pi_{V_q}^{-1}(y^{(\delta)})),\]
we get some $y_\star \in V_q$ such that
\begin{equation}\label{eq:AqVqy}
\Ncov(A_q \cap \pi_{V_q}^{-1}(y_\star^{(\delta)})) \geq \delta^{-\frac{q-1}{q}\alpha + O(\epsilon)}.
\end{equation}
After a translation, we can suppose $y_\star = 0$. We write $V = V_1 + \dotsb + V_{q-1}$ and $W = V_q^\perp$ and set $B_0 = A_q \cap W^{(\delta)}$ and $B = \pi_W(B_0)$. We have $\Ncov(B) \geq \delta^{-\beta+O(\epsilon)}$ from \eqref{eq:AqVqy} and the fact that $B_0 \subset B^{(\delta)}$.

It remains to show the non-concentration property~\eqref{eq:Breg} for $B$. Let $\rho \geq \delta$ and $x \in W$. From \eqref{eq:VjV1+}, $\dang(V,W^\perp) = \dang(V,V_q) \geq \delta^{O(\epsilon)}$. Hence, by \eqref{eq:fromW2V} in Lemma~\ref{lm:fromVp2V},
\begin{equation*}
\Ncov(B \cap x^{(\rho)}) \leq \delta^{-O(\epsilon)}\Ncov(\pi_V(B) \cap x_0^{(\rho)})
\end{equation*}
where $x_0 = \pi_V(x)$. Moreover $B \subset A_q^{(\delta)}$, hence, by \eqref{eq:deltacap},
\begin{equation*}
\Ncov(\pi_V(B) \cap x_0^{(\rho)}) \leq \delta^{-\epsilon}\Ncov(\pi_V(A_q) \cap x_0^{(2\rho)}).
\end{equation*}

Then Lemma~\ref{lm:tiltCart} applied to the set $\pi_V(A_q) \cap x_0^{(2\rho)}$ in $V=\bigoplus_{j=1}^{q-1} V_j$ together with \eqref{eq:VjV1+} yield
\begin{equation*}
\Ncov(\pi_V(A_q) \cap x_0^{(2\rho)}) \leq \delta^{-O(\epsilon)} \Ncov(\pi_{V_1}(A_q) \cap x_1^{(2\rho)})\prod_{j = 2}^{q-1} \Ncov(\pi_{V_j}(A_q))
\end{equation*}
where $x_1 = \pi_{V_1}(x_0)$. The required non-concentration property~\eqref{eq:Breg} then follows from \eqref{eq:A1reg} and \eqref{eq:VjAj<}.
\end{proof}

\subsection{Without the non-concentration property} As illustrated by the example in the introduction, the non-concentration condition~\eqref{eq:nonconA} on $A$ is crucial to have a gain $\epsilon > 0$ in the conclusion. Without this condition, we can still expect $\Ncov(\pi_V(A))$ to be close to $\Ncov(A)^\frac{m}{n}$ for generic $V \in \Gr(\R^n,m)$. This is the subject of the next proposition.

\begin{prop}\label{pr:Nincr}
Given $0 < m \leq n$, $0< \alpha <n$ and $\kappa > 0$, there exists $C > 1$ such that for all $0 < \epsilon < \frac{\kappa}{C}$, the following is true for all $\delta > 0$ sufficiently small. Let $A \subset \R^n$ be a subset contained in the unit ball and $\mu$ a probability measure on $\Gr(\R^n,m)$. Assume that
\begin{equation}\label{eq:sizeApp}
\Ncov(A) \geq \delta^{-\alpha - C\epsilon}.
\end{equation}
Further assume the non-concentration property \eqref{eq:nonconmu} for $\mu$ if $m < n$. Then
\[\mu(\EC(A)) \leq \delta^{\epsilon}.\]
\end{prop}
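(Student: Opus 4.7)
The case $m = n$ is immediate: $\pi_V = \id$, so
$\Ncov(\pi_V(A')) = \Ncov(A') \geq \delta^\epsilon \Ncov(A) \geq \delta^{(1-C)\epsilon - \alpha} \geq \delta^{-\alpha-\epsilon}$
as soon as $C \geq 2$, whence $\EC(A) = \varnothing$. Henceforth assume $m < n$, so that \eqref{eq:nonconmu} is in force.

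The strategy is a second-moment estimate of $\omega_\delta(\pi_V,\bullet)$ integrated against $\mu$, built on the following \emph{direction bound}:
\begin{equation*}
\forall u \in S^{n-1},\ \forall \rho \in [\delta,1], \quad \mu\bigl(\{V \in \Gr(\R^n,m) : \norm{\pi_V u} \leq \rho\}\bigr) \ll \delta^{-\epsilon}\rho^\kappa.
\end{equation*}
To prove it, fix any $m$-plane $W$ containing $u$. If $\norm{\pi_V u} \leq \rho$ then there is $u' \in V^\perp$ with $\norm{u - u'} \leq \rho$; since $u \in W$, this forces $\norm{\pi_{W^\perp}(u')} \leq \rho$, so the restriction $\pi_{W^\perp \mid V^\perp}\colon V^\perp \to W^\perp$ has smallest singular value $\leq \rho$, and $\dang(V^\perp,W) \leq \rho$ by \eqref{eq:det=dang}. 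The identity \eqref{eq:perpperp} rewrites this as $\dang(V,W^\perp) \leq \rho$, and \eqref{eq:nonconmu} applied to the $(n-m)$-plane $W^\perp$ concludes.

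Assume for contradiction $\mu(\EC(A)) > \delta^\epsilon$ and set $N := \Ncov(A)$. For each $V \in \EC(A)$ pick a witness $A_V \subset A$ with $\Ncov(A_V) \geq \delta^\epsilon N$ and $\Ncov(\pi_V(A_V)) < \delta^{-(m/n)\alpha -\epsilon}$. By \eqref{eq:phiEnergy} applied to $A_V$,
\[
\omega_\delta(\pi_V,A) \geq \omega_\delta(\pi_V,A_V) \gg \frac{\Ncov(A_V)^2}{\Ncov(\pi_V(A_V))} \geq \delta^{3\epsilon + (m/n)\alpha} N^2.
\]
Integrating over $V \sim \mu$ and opening $\omega_\delta$ pair by pair on a maximal $2\delta$-separated subset $\tilde A$ of $A$ yields
\begin{equation*}
\mu(\EC(A))\,\delta^{3\epsilon + (m/n)\alpha} N^2 \ll N + \delta^{-\epsilon}\!\!\!\sum_{a \neq b \in \tilde A}\!\! \bigl(\delta/\abs{a-b}\bigr)^\kappa,
\end{equation*}
the direction bound handling the off-diagonal terms. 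A dyadic split at the scale $r_\star = \delta N^{1/n}$, using only the packing bound $\abs{\tilde A \cap \Ball(x,r)} \ll (r/\delta)^n$ that follows from $2\delta$-separation (no assumption on $A$), gives $\sum_{a\neq b}(\delta/\abs{a-b})^\kappa \ll N^{2-\kappa/n}$ for $\kappa < n$. Plugging in $N \geq \delta^{-\alpha - C\epsilon}$ produces
\[
\mu(\EC(A)) \ll \delta^{\alpha(\kappa-m)/n \,+\, (C\kappa/n - 4)\epsilon}.
\]
When $\kappa \geq m$, choosing $C \geq 5n/\kappa$ makes the exponent exceed $\epsilon$, the desired contradiction.

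\textbf{Main obstacle.} The delicate regime is $\kappa < m$, where the constant term $\alpha(\kappa-m)/n$ is negative and cannot be absorbed by the linear-in-$\epsilon$ contribution under the constraint $\epsilon < \kappa/C$. I would handle this by a preliminary multi-scale pigeonhole in the spirit of Lemma~\ref{lm:NCproj}: extract a sub-ball $\Ball(x_0,r_0)$ and a subset $A_0 \subset A\cap \Ball(x_0,r_0)$ which, after rescaling by $r_0^{-1}$ to $\Ball(0,1)$, satisfies Frostman-type non-concentration with an exponent proportional to $\kappa$ at the refined scale $\delta/r_0$; the slack $\delta^{-C\epsilon}$ in the hypothesis is precisely what is consumed in this regularization. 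Applying the energy argument above to the rescaled $A_0$ (the measure $\mu$ is unchanged) and transferring the conclusion back via Lemma~\ref{lm:GLaction} should close the remaining range of $\kappa$, at the price of taking $C$ large depending on all of $m$, $n$, $\alpha$, $\kappa$.
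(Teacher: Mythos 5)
Your direction bound and the Kaufman-type second-moment estimate are correct as far as they go, and they do prove the proposition in the regime you isolate (essentially $\kappa\geq m$, after the harmless adjustment needed when $\kappa\geq n$ in the dyadic count). But the statement has to hold for \emph{every} $\kappa>0$, and small $\kappa$ is the only regime that matters in this paper; so your ``main obstacle'' is the whole problem, and the fix you sketch cannot close it. The obstruction is not irregularity of $A$: since $A$ lies in the unit ball, every off-diagonal pair has $\abs{a-b}\leq 2$, so the upper bound you feed into the second moment is always at least $c\,\delta^{-\epsilon}\delta^{\kappa}N^{2}$, no matter which well-distributed subset $A_0$ you extract or at which scale you rescale. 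Comparing with the threshold $\delta^{\frac{m}{n}\alpha+O(\epsilon)}N^{2}$ coming from the witnesses, the inequality carries information only when $\kappa\geq\frac{m}{n}\alpha-O(\epsilon)$ (your cruder packing bound needs $\kappa\geq m$). Moreover the argument sees $\mu$ only through the single-direction bound $\mu\{V:\norm{\pi_V u}\leq\rho\}\ll\delta^{-\epsilon}\rho^{\kappa}$, and there are measures satisfying \eqref{eq:nonconmu} for which this bound is sharp for every $u$ (a $\kappa$-Frostman measure on a non-degenerate family of $m$-planes); for such $\mu$ the two sides of your inequality are perfectly compatible with $\mu(\EC(A))=1$ once $\kappa<\frac{m}{n}\alpha$. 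Since rescaling $A$ leaves $\mu$ untouched (as you note yourself), no regularization of $A$ can rescue a one-direction-at-a-time potential-theoretic argument: it caps out at the Mattila--Falconer range and cannot reach $\frac{m}{n}\alpha$ with tiny $\kappa$.

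The paper proves the proposition by a completely different mechanism, which is what you would need to import: an induction on $(n,m)$ with the trivial base case $m=n$, in which $q$ directions drawn independently from $\mu$ are used \emph{simultaneously}. When $qm\leq n$ one sums the random planes (Lemma~\ref{lm:sumsOK}) and applies Lemma~\ref{lm:tiltCart}, a Loomis--Whitney-type inequality: if all $q$ projections had size $\leq\delta^{-\frac{m}{n}\alpha-\epsilon}$ and the planes were $\delta^{O(\epsilon)}$-transversal, then $\Ncov(A)\leq\delta^{-\alpha-O(\epsilon)}$, contradicting \eqref{eq:sizeApp} once $C$ is large. When $m>n/2$ one intersects the random planes and uses Proposition~\ref{pr:SliceEndelta} together with Lemmas~\ref{lm:piWVok} and~\ref{lm:piVpB} to dispose of large slices. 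In that scheme $\kappa$ enters only through angle lower bounds $\dang(\cdot,\cdot)\geq\delta^{O(\epsilon)}$, so its smallness costs nothing, and the slack $C\epsilon$ in \eqref{eq:sizeApp} is what absorbs the $\delta^{-O(\epsilon)}$ losses -- not a regularization of $A$.
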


When $m$ divides $n$, this follows almost immediately from Lemma~\ref{lm:tiltCart}. Then the task is to reduce to this special case. Since it shares the same set of ideas as the proof of Theorem~\ref{thm:main}, the proof below will only be outlined and more details can be found in the next section.

\begin{proof}
For $0 < m \leq n$, denote by $\PC(n,m)$ the statement we want to show. Note that for all $n \geq 1$, $\PC(n,n)$ is trivially true. We will proceed by an induction similar to that in the proof of Theorem~\ref{thm:main}. It suffices to show the following two types of inductive steps. Let $0 < m \leq n$ and $q,r > 0$ be integers.
\begin{enumerate}
\item \label{it:Nincr1} If $mq \leq n$, then $\PC(n,qm)$ implies $\PC(n,m)$.
\item \label{it:Nincr2} If $n = q(n-m) + r$ with $0 < r \leq n - m$, then $\PC(n,r)$ and $\PC(n-r,m)$ imply $\PC(n,m)$.
\end{enumerate}

Using the same argument in Proposition~\ref{pr:unionAps}, we see that in order to show $\PC(n,m)$, it suffices to show $\mu(\EC(A')) \leq \delta^\epsilon$ for some subset $A' \subset A$. In other words, if the conclusion of $\PC(n,m)$ fails for the set $A$ then for any subset $A' \subset A$, $\mu(\EC(A')) \geq \delta^\epsilon$.

\emph{Proof of \ref{it:Nincr1}}. Let $V_1,\dotsc,V_q$ be random elements of $\Gr(\R^n,m)$ independently distributed according to $\mu$.
Write $V=V_1 + \dotsb + V_q$. When $qm < n$, we know by Lemma~\ref{lm:sumsOK} that
\[\Prob{\dim(V) = qm} \geq 1 - (q-1)\delta^{\kappa - \epsilon},\]
and the distribution of $V$ conditional to the event $\dim(V) = qm$ has the corresponding non-concentration property. By $\PC(n,qm)$, we know that for any $C' >0$, if the constant $C$ in \eqref{eq:sizeApp} is large enough (depending on $C'$) then the probability that there exists $A'\subset A$ satisfying
\[\Ncov(A')\geq \delta^{C'\epsilon}\Ncov(A) \text{ and } \Ncov(\pi_V(A')) \leq \delta^{-\frac{qm}{n}\alpha-C'\epsilon}\]
is at most $\delta^{C'\epsilon} + (q-1)\delta^{\kappa - \epsilon}$.

Suppose that $\PC(n,m)$ fails for $A$. Then by a simple induction we show that with probability at least $\delta^{O(\epsilon)}$, we have
\[\dang(V_1,\dotsc,V_q) \geq \delta^{O(\epsilon)}\]
and there exists $A_q \subset A$ such that $\Ncov(A_q)\geq \delta^{O(\epsilon)}\Ncov(A)$ and 
\[\forall j = 1,\dotsc,q,\quad \Ncov(\pi_{V_j}(A_q)) \leq \delta^{-\frac{m}{n}\alpha-\epsilon}\]
and hence, by \eqref{eq:tiltCart} applied to $\pi_V(A)$ in $V=\bigoplus_{j=1}^{q} V_j$,
\[\Ncov(\pi_V(A)) \leq \delta^{-\frac{qm}{n}\alpha-O(\epsilon)}.\]
We obtain a contradiction if $C'$ were chosen to be larger than any of the implicit constants in the Landau notations appearing above.

\emph{Proof of \ref{it:Nincr2}, Case 1}. Assume firstly that $A$ contains large slice of dimension $n-r$. More precisely, assume that there exists $W \in \Gr(\R^n,n-r)$ and $x \in \R^n$ such that
\[ \Ncov\bigl(A \cap (x + W^{(\delta)})\bigr) \geq \delta^{-\frac{n-r}{n}\alpha - C'\epsilon}\] 
where $C'$ is the constant given by $\PC(n-r,m)$ applied to $0< m \leq n-r$, $\frac{n-r}{n}\alpha$ and $\kappa$. Without loss of generality, we can assume that $x = 0$ and that $B = \pi_W(A \cap W^{(\delta)})$ is contained in $A$. Lemma~\ref{lm:piWVok} tells us that we can apply $\PC(n-r,m)$ to $B \subset W$ with the image measure of $\mu$ by $\pi_W$. Then we can conclude using Lemma~\ref{lm:piVpB}.

\emph{Proof of \ref{it:Nincr2}, Case 2}. Otherwise $A$ does not contain any large slice of dimension $n-r$ :
\begin{equation}\label{eq:NOn-rSlice}
\forall x \in \R^n,\; \forall W \in \Gr(\R^n,n-r),\quad \Ncov\bigl(A \cap (x + W^{(\delta)})\bigr) \leq \delta^{-\frac{n-r}{n}\alpha - O(\epsilon)}.
\end{equation}
Let $V_1,\dotsc,V_q$ be random elements of $\Gr(\R^n,m)$ independently distributed according to $\mu$.
Write $V=V_1 \cap \dotsb \cap V_q$. By \eqref{eq:perpperp} and Lemma~\ref{lm:sumsOK} applied to $V_1^\perp + \dotsb + V_q^{\perp} = V^\perp$,
\[\Prob{\dim(V) = r} \geq 1 - (q-1)\delta^{\kappa - \epsilon}\]
and that the distribution of $V$ conditional to the event $\dim(V) = r$ has a non-concentration property. By $\PC(n,r)$, we know that for any $C' >0$, if the constant $C$ in \eqref{eq:sizeApp} is large enough (depending on $C'$) then the probability that there exists $A'\subset A$ satisfying
\[\Ncov(A')\geq \delta^{C'\epsilon}\Ncov(A) \text{ and } \Ncov(\pi_V(A')) \leq \delta^{-\frac{r}{n}\alpha-C'\epsilon}\]
is at most $\delta^{C'\epsilon} + (q-1)\delta^{\kappa - \epsilon}$.

Suppose that $\PC(n,m)$ fails for $A$. Again by an induction we show that with probability at least $\delta^{O(\epsilon)}$, we have
\[\dang(V_1^\perp,\dotsc,V_q^\perp) \geq \delta^{O(\epsilon)}\]
and there exists $A_q \subset A$ such that $\Ncov(A_q)\geq \delta^{O(\epsilon)}\Ncov(A)$ and 
\[\forall j = 1,\dotsc,q,\quad \Ncov(\pi_{V_j}(A_q)) \leq \delta^{-\frac{m}{n}\alpha-\epsilon}\]
Together with \eqref{eq:NOn-rSlice}, this implies by Proposition~\ref{pr:SliceEndelta} that there exists $A' \subset A_q$ such that
\[\Ncov(A') \geq \delta^{O(\epsilon)} \Ncov(A) \text{ and } \Ncov(\pi_V(A')) \leq \delta^{-\frac{r}{n}\alpha - O(\epsilon)}.\]
Again we obtain a contradiction if $C'$ is large compared to any of the implied constants in the previous Landau notations.
\end{proof}

\section{Proof of the main result}
In this section, we prove Theorem~\ref{thm:main} and thus Theorem~\ref{thm:proj}. This is done by proving first the base case where $n = 2m$ (Propsoition~\ref{pr:n2mcase}) and then the induction steps (Propositions~\ref{pr:nqmcase}-\ref{pr:nqn-mrcase}). Note that on account of Proposition~\ref{pr:unionAps}, for a given pair $(n,m)$, if Theorem~\ref{thm:main} is true for these dimensions then so is Theorem~\ref{thm:proj}. Therefore, when we use Theorem~\ref{thm:main} as induction hypothesis, the conclusion is $\mu(\EC(A)) \leq \delta^\epsilon$ while when we prove by contradiction by saying that $A$ is a counterexample for Theorem~\ref{thm:main}, we are assuming $\mu(\EC(A')) > \delta^{\epsilon}$ for all subsets $A'$ of $A$.

Like in the previous section, all implied constants in Landau and Vinogradov notations in this section may depend on $n$ and $\kappa$. Again, every statement in this section is true only for $\delta > 0$ smaller than a constant depending on $n$, $m$, $\alpha$, $\kappa$ and $\epsilon$.

\subsection{Half dimensional projections}\label{ss:n2mcase}
For the special case $n=2m$, we follow mainly the proof in \cite{Bourgain2010} (which deals with the case $m=1$) while using a technique in the proof of Proposition 2 in Bourgain-Glibichuk~\cite{BourgainGlibichuk}. The main idea, as explained in the introduction, is to reduce to the case where $A$ is a Cartesian product $X_\star \times X_\star$ with the help of Balog-Szemerédi-Gowers theorem and then apply a sum-product estimate.

\begin{proof}[Proof of Proposition~\ref{pr:n2mcase}]
Suppose Theorem~\ref{thm:main} fails for the subset $A \subset \R^{n}$ and the probability measure $\mu$ on $\Gr(\R^n,m)$ with $n=2m$. We will get a contradiction when $\epsilon$ is small enough. By Lemma~\ref{lm:NCproj}, there is a subspace $V_1$ and a subset $A_1 \subset A$ with the following properties:
\[\Ncov(A_1) \geq \delta^{-\alpha + 3\epsilon} , \;
\Ncov(\pi_{V_1}(A_1)) \leq \delta^{-\frac{\alpha}{2} - \epsilon} \;\text{and}\]
\begin{equation}\label{eq:nonconXstar}
\forall \rho \geq \delta,\; \forall x \in V_1,\; \Ncov(\pi_{V_1}(A_1) \cap x^{(\rho)}) \leq \rho^{\frac{\kappa}{8}} \delta^{-\frac{\alpha}{2} - O(\epsilon)}.
\end{equation}

Let $\epsilon_1 = \frac{3\epsilon}{\kappa}$. Then $\mu\bigl(\EC(A_1) \setminus \Vang(V_1,\delta^{\epsilon_1})\bigr) \geq \delta^\epsilon - \delta^{2\epsilon} > 0$ by the non-concentration property \eqref{eq:nonconmu} of $\mu$. Let $V_2 \in \EC(A_1) \setminus \Vang(V_1,\delta^{\epsilon_1})$ with $A_2$ such that
\begin{equation}\label{eq:piV12A2}
\Ncov(A_2) \geq \delta^{-\alpha + 4\epsilon}
\; \text{and} \;
\Ncov(\pi_{V_j}(A_2)) \leq \delta^{-\frac{\alpha}{2} - \epsilon} ,\qquad j = 1,2.
\end{equation}

Observe that since $\dang(V_1,V_2)\geq \delta^{O(\epsilon)}$ there exists a $\delta^{-O(\epsilon)}$-bi-Lipschitz map $f \in \GL(\R^n)$ satifying (recalling the notation introduced in Subsection~\ref{ss:actGL}) $f^\perp V_1 = V_1$ and $f^\perp V_2 = V_1^\perp$. Applying Lemma~\ref{lm:GLaction} to $f$, we see that we can assume without loss of generality that $V_2 = V_1^\perp$. 

Put $X = \pi_{V_1}(A_2)$ and $Y = \pi_{V_2}(A_2)$. We have, $\Ncov(A_2) \ll \Ncov(X)\Ncov(Y)$ and this together with the inequalities~\eqref{eq:piV12A2} implies
\[\Ncov(X),\, \Ncov(Y) \geq \delta^{-\frac{\alpha}{2} + O(\epsilon)}.\]

Write $\DC = \EC(A_2) \setminus (\Vang(V_1,\delta^{\epsilon_1}) \cup \Vang(V_2,\delta^{\epsilon_1}))$. We have, by~\eqref{eq:nonconmu}, $\mu(\DC) \geq \delta^\epsilon -2\delta^{2\epsilon} \geq \delta^{2\epsilon}$. Let $V \in \DC$. By \eqref{eq:det=dang} and \eqref{eq:perpperp}, we have
\[\abs{\det(\pi_{V\mid V_1})} = \dang(V_1,V^\perp) = \dang(V_2,V) \geq \delta^{O(\epsilon)}.\]
The same is true for $\pi_{V\mid V_2}$. Then it follows easily from the Cartan decomposition that
\begin{equation}\label{eq:normInvPi}
\norm{\pi_{V\mid V_1}^{-1}} \leq \delta^{-O(\epsilon)} \text{ and } \norm{\pi_{V\mid V_2}^{-1}} \leq \delta^{-O(\epsilon)}.
\end{equation}

Since $V \in \EC(A_2)$, there is a subset $A_V \subset A_2$ such that $\Ncov(A_V) \geq \delta^{-\alpha + O(\epsilon)}$ and
$\Ncov(\pi_V(A_V)) \leq \delta^{-\frac{\alpha}{2} - \epsilon}$. It follows from~\eqref{eq:phiEnergy} that
\[\En_\delta(\pi_V,X + Y) \geq \En_\delta(\pi_V,A_V) \geq \delta^{-\frac{3\alpha}{2} + O(\epsilon)}.\]

By~\eqref{eq:normInvPi}, the map $\R^n = V_1 \oplus V_2 \to V \times V$ defined by $v_1 + v_2 \mapsto (\pi_V(v_1), \pi_V(v_2))$ is $\delta^{-O(\epsilon)}$-bi-Lipschitz. Hence, by \eqref{eq:Enphipsi}, we can bound from below the additive energy between $\pi_V X$ and $\pi_V Y$,
\[\En_\delta(+, \pi_V X \times \pi_V Y) \geq \delta^{-\frac{3\alpha}{2} + O(\epsilon)} \geq \delta^{O(\epsilon)} \Ncov(\pi_V X)^{\frac{3}{2}} \Ncov(\pi_V Y)^{\frac{3}{2}}.\]

That is why we can apply the Balog-Szemerédi-Gowers theorem (Theorem~\ref{thm:BSG+}) to get subsets $X_V \subset X$ and $Y_V \subset Y$ such that
\begin{equation}\label{eq:sizeXV}
\Ncov(X_V),\,\Ncov(Y_V) \geq \delta^{-\frac{\alpha}{2} + O(\epsilon)}
\end{equation}
and
\begin{equation}\label{eq:BVtimesCV}
\Ncov(\pi_V X_V + \pi_V Y_V) \leq \delta^{-\frac{\alpha}{2} - O(\epsilon)}.
\end{equation}

Applying $\pi_{V\mid V_1}^{-1}$ to the set in the last inequality and using \eqref{eq:sizefA}, we obtain
\begin{equation}\label{eq:BVplusfCV}
\Ncov(X_V + \phi_V Y_V) \leq \delta^{-\frac{\alpha}{2} - O(\epsilon)},
\end{equation}
where $\phi_V \colon V_2 \to V_1$ is $\phi_V = \pi_{V\mid V_1}^{-1} \circ \pi_{V\mid V_2}$. Note that from~\eqref{eq:normInvPi}, $\phi_V$ is $\delta^{-O(\epsilon)}$-bi-Lipschitz.

Let us apply Lemma~\ref{lm:bigCap} to the collection of subsets $X_V^{(\delta)} \times Y_V^{(\delta)} \subset X^{(\delta)} \times Y^{(\delta)}$ with the restriction of $\mu$ to $\DC$. We obtain $V_\star \in \DC$, $X_\star \coloneqq X_{V_\star}$ and $Y_\star \coloneqq Y_{V_\star}$ such that
\[\lambda(X_\star^{(\delta)} \cap X_V^{(\delta)})\lambda(Y_\star^{(\delta)} \cap Y_V^{(\delta)}) \geq \delta^{n - \alpha + O(\epsilon)}\]
whenever $V \in \DC'$, where $\DC'$ is a subset of $\DC$ with
\begin{equation}\label{eq:muDCp}
\mu(\DC') \geq \delta^{O(\epsilon)}\mu(\DC) \geq \delta^{O(\epsilon)}.
\end{equation}

By Ruzsa's triangular inequality (Lemma~\ref{lm:RuzsaTri}), \eqref{eq:BVplusfCV} implies, for all $V \in \DC'$
\[\Ncov(X_V - X_\star^{(\delta)} \cap X_V^{(\delta)}) \ll \Ncov(X_V - X_V) \leq \delta^{-\frac{\alpha}{2} - O(\epsilon)}.\]
For the same reason $\Ncov(X_\star - X_\star^{(\delta)} \cap X_V^{(\delta)}) \leq \delta^{-\frac{\alpha}{2} - O(\epsilon)}$. Then by Ruzsa's triangular inequality again, we have
\begin{equation}\label{eq:RuszaT1}
\Ncov(X_\star - X_V) \leq \delta^{-\frac{\alpha}{2} - O(\epsilon)}.
\end{equation}
Similarly, $\Ncov(Y_\star - Y_V) \leq \delta^{-\frac{\alpha}{2} - O(\epsilon)}$, which implies with \eqref{eq:sizefA},
\begin{equation}\label{eq:RuszaT2}
\Ncov(\phi_V Y_\star - \phi_V Y_V) \leq \delta^{-\frac{\alpha}{2} - O(\epsilon)}.
\end{equation}
Moreover, \eqref{eq:BVplusfCV} specified to $V = V_*$ with \eqref{eq:sizefA} applied to $\phi_V \phi_\star^{-1}$ gives
\begin{equation}\label{eq:RuszaT3}
\Ncov(\phi_V \phi_\star^{-1} X_\star + \phi_V Y_\star) \leq \delta^{-\frac{\alpha}{2} - O(\epsilon)},
\end{equation}
where $\phi_\star \coloneqq \phi_{V_\star}$.

Now successive use of Ruzsa's triangular inequality (recalling \eqref{eq:RuszaT1}, \eqref{eq:BVplusfCV}, \eqref{eq:RuszaT2} and \eqref{eq:RuszaT3}) yields that for all $V \in \DC'$,
\begin{equation}\label{eq:XstarfXstar}
\Ncov(X_\star - \phi_V \phi_\star^{-1} X_\star) \leq \delta^{-\frac{\alpha}{2} - O(\epsilon)}.
\end{equation}
Moreover, by the Plünnecke-Ruzsa inequality (Lemma~\ref{lm:RuzsaSum}),
\begin{equation}\label{eq:XstarXstar}
\Ncov(X_\star + X_\star) \leq \delta^{-\frac{\alpha}{2} - O(\epsilon)}.
\end{equation}

Consider the set of endomorphisms $\AC = \{-\phi_V \phi_\star^{-1} \in \End(V_1) \mid V \in \DC'\}$. We claim that the assumptions of Theorem~\ref{thm:ActionRn} are satisfied for $\AC$ and $X_\star$ with $\sigma = \frac{\alpha}{2}$, and $\kappa$ replaced by $\frac{\kappa}{8}$ and $\epsilon$ replaced by $O(\epsilon)$ . Therefore, when $\epsilon$ is small enough, \eqref{eq:XstarfXstar} and \eqref{eq:XstarXstar} contradict Theorem~\ref{thm:ActionRn}.

Our claim about the assumptions~\ref{it:AinBall}, \ref{it:XinBall} and \ref{it:sizeXleq} are clear from what precedes. The assumption \ref{it:sizeXatrho} follows from \eqref{eq:nonconXstar} and \eqref{eq:sizeXV} because for any $\rho\geq \delta$,
\[ \Ncov(X_\star) \leq \Ncov[\rho](X_\star) \max_{x \in V_1} \Ncov(X_\star \cap x^{(\rho)}).\]

In the case of $m = 1$, the assumption \ref{it:irreducible} is trivially true and the assumption \ref{it:sizeAatrho} follows immediately from \eqref{eq:nonconmu} the fact that $\dang$ is a distance on $\Gr(\R^2,1)$ and the fact that the map $\Gr(\R^2,1) \setminus \Vang(V_2,\delta^{\epsilon_1}) \to \R$, $V \mapsto \phi_V\phi_\star^{-1}$ is $\delta^{-O(\epsilon)}$-bi-Lipschitz.

Finally, to prove \ref{it:sizeAatrho} and \ref{it:irreducible} in the case where $m \geq 2$ we use Lemma~\ref{lm:phiVrich} below. For any $f \in \End(V_1)$, pick an arbitrary nonzero vector $v_1 \in V_1$ and apply Lemma~\ref{lm:phiVrich} to $v_2 = \phi_\star^{-1}(v_1)$ and $W= \R f(v_1)$. This gives the existence of a subspace $W' \in \Gr(\R^n,m)$ such that $\dang(V,W') \leq \delta^{-O(\epsilon)}d(\phi_V\phi_\star^{-1},f)$. Hence by \eqref{eq:nonconmu}, for any $\rho \geq \delta$,
\[\mu\bigl( \{V \in \DC'\mid -\phi_V \phi_\star^{-1}\in \Ball(f,\rho)\}\bigr) \leq \delta^{-O(\epsilon)}\rho^\kappa.\]
Observe that
\[\mu(\DC') \leq \Ncov[\rho](\AC) \max_{f\in \End(V_1)} \mu\bigl( \{V \in \DC'\mid -\phi_V \phi_\star^{-1}\in \Ball(f,\rho)\}\bigr).\]
Together with \eqref{eq:muDCp}, this gives the assumption~\ref{it:sizeAatrho}, namely,
\[\Ncov[\rho](\AC) \geq \delta^{O(\epsilon)} \rho^{-\kappa}.\]

Moreover, for any nonzero proper linear subspace $W \in V_1$, take $w \in W$ some vector with $\norm{w}=1$ and consider
\[\rho_0 = \sup_{V \in \DC'} d(-\phi_V \phi_\star^{-1}(w),W).\]
By Lemma~\ref{lm:phiVrich} applied to $v_2 = \phi_\star^{-1}(w)$ which has norm $\leq \delta^{-O(\epsilon)}$, we have $\DC' \subset \Vang(W',\delta^{-O(\epsilon)}\rho_0)$ for some $W' \in \Gr(\R^n,m)$. In view of \eqref{eq:muDCp} and \eqref{eq:nonconmu}, we have $\delta^{O(\epsilon)} \leq \delta^{-O(\epsilon)}\rho_0^\kappa$. Hence $\rho_0 \geq \delta^{O(\epsilon)}$, which establishes \ref{it:irreducible}.
\end{proof}

\begin{lemm}\label{lm:phiVrich}
We use the notations in the proof above. For any nonzero vector $v_2 \in V_2$ and any proper linear subspace $W \subset V_1$, there is $W' \in \Gr(\R^n,m)$ such that for all $V \in \Gr(\R^n,m)$, 
\begin{equation}\label{eq:vawayW}
\dang(V,W') \leq \norm{v_2}^{-1}d(\phi_V(v_2), W) .
\end{equation}
\end{lemm}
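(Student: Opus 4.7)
My plan is to construct $W'$ as the orthogonal complement of $\tilde W + \R v_2$ for a well-chosen hyperplane $\tilde W$ of $V_1$ containing $W$, and then bound $\dang(V,W')$ by exhibiting an explicit vector of $V$ whose image under $\pi_{\tilde W + \R v_2}$ is controlled by $\phi_V(v_2)$.

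First I would fix any hyperplane $\tilde W$ of $V_1$ with $W \subset \tilde W$. Setting $W_0 = \tilde W + \R v_2$, this is an orthogonal direct sum because $\tilde W \subset V_1 \perp V_2 \ni v_2$, so $\dim W_0 = m$, and I define $W' = W_0^\perp \in \Gr(\R^n,m)$. Since $d(\phi_V(v_2),W) \geq d(\phi_V(v_2),\tilde W)$, it is enough to prove the inequality with $\tilde W$ in place of $W$. By~\eqref{eq:det=dang}, $\dang(V,W') = \abs{\det(\pi_{W_0 \mid V})}$.

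Next I would choose a test vector in $V$. Let $e_0$ be a unit vector spanning the line $V_1 \cap \tilde W^\perp$, and pick any nonzero $u \in V \cap \tilde W^\perp$, an intersection that is at least one\dash{}dimensional by dimension count. Writing $u = u_1 + u_2$ via the decomposition $\R^n = V_1 \oplus V_2$, the component $u_1$ lies in $V_1 \cap \tilde W^\perp = \R e_0$, so $u_1 = \alpha e_0$ for some scalar $\alpha$, which is automatically nonzero as soon as $V \cap V_2 = \{0\}$, the only regime in which $\phi_V$ is defined. Because $W_0 = \tilde W \oplus \R v_2$ is an orthogonal decomposition and $u \perp \tilde W$, we get $\pi_{W_0}(u) = \pi_{\R v_2}(u) = \ps{u}{v_2}\norm{v_2}^{-2} v_2$.

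The key identity is $\ps{u}{v_2} = \ps{u}{\phi_V(v_2)}$, which follows at once from the defining property $\phi_V(v_2) - v_2 \in V^\perp$ combined with $u \in V$. Since $\phi_V(v_2) \in V_1$ and $u_1 = \alpha e_0$, this inner product equals $\alpha\ps{e_0}{\phi_V(v_2)}$, whose absolute value is $\abs{\alpha}\,d(\phi_V(v_2),\tilde W)$ because $e_0$ spans $V_1 \cap \tilde W^\perp$. To conclude, I would extend $u/\norm{u}$ to an orthonormal basis of $V$ and apply Hadamard's inequality to $\pi_{W_0 \mid V}$, bounding each factor $\norm{\pi_{W_0}(\cdot)}$ by $1$ on unit vectors: this yields $\dang(V,W') \leq \norm{\pi_{W_0}(u)}/\norm{u}$, and the trivial inequality $\norm{u} \geq \abs{\alpha}$ then gives the claim. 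The main subtle point is the choice of $W'$: it is designed precisely so that the orthogonality relation $\phi_V(v_2) - v_2 \perp V$ translates the distance from $\phi_V(v_2)$ to $W$ inside $V_1$ directly into smallness of $\pi_{W_0}$ on a distinguished vector of $V$.
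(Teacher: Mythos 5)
Your argument is correct, and your witness $W'$ is exactly the paper's: the paper reduces to $\dim W = m-1$ (your passage from $W$ to the hyperplane $\tilde W$) and sets $W' = (\R v_2 + W)^\perp$, and both proofs hinge on the same key fact that $\phi_V(v_2) - v_2 \in V^\perp$, i.e.\ $\pi_V(\phi_V(v_2)) = \pi_V(v_2)$. Where you differ is the execution of the angle estimate. The paper stays on the $W_0$ side: for $w \in W$ it computes, via \eqref{eq:wedgePi2}, that $\dang(V^\perp,\R(v_2-w)) = \norm{\pi_V(v_2-w)}/\norm{v_2-w} \leq \norm{\phi_V(v_2)-w}/\norm{v_2}$, optimizes over $w$, and converts $\dang(V^\perp,\R v_2 + W)$ into $\dang(V,W')$ by \eqref{eq:perpperp}. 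You stay on the $V$ side: $\dang(V,W')=\abs{\det(\pi_{W_0\mid V})}$ by \eqref{eq:det=dang}, and Hadamard's inequality applied to an orthonormal basis of $V$ containing $u/\norm{u}$, with $u \in V\cap\tilde W^{\perp}$, reduces everything to the single inner product $\ps{u}{v_2}=\ps{u}{\phi_V(v_2)}=\alpha\ps{e_0}{\phi_V(v_2)}$, whose modulus is $\abs{\alpha}\,d(\phi_V(v_2),\tilde W)$. The two computations are dual to one another (a unit test vector of $W_0$ projected into $V$, versus a unit test vector of $V$ projected into $W_0$), and both are complete; the paper's variant is marginally leaner, needing neither the dimension count for $V\cap\tilde W^\perp$ nor the remark that $\alpha\neq 0$ (which, as you correctly note, only matters in the regime where $\phi_V$ is defined at all), while your variant avoids passing to orthogonal complements altogether.
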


\begin{proof}
Without loss of generality, we can assume that $\dim(W) = m -1$. For any $V \in \Gr(\R^n,m)$, any $v_2 \in V_2$ and any $w \in W$, by~\eqref{eq:wedgePi2}, we have
\[\dang(V^\perp,\R(v_2 - w)) = \frac{\norm{\pi_V(v_2 - w)}}{\norm{v_2 - w}}.\]
Note that $\norm{v_2 - w} \geq \norm{v_2}$ since $v_2 \perp w$ and $\norm{\pi_V(v_2 -w)} \leq \norm{\phi_V(v_2) - w}$ since $\pi_V(\phi_V(v_2) - w) = \pi_V(v_2 -w)$. Hence 
\[\dang(V^\perp,\R(v_2 - w)) \leq \frac{\norm{\phi_V(v_2) - w}}{\norm{v_2}}.\]
As $w$ can be any vector in $W$, we obtain 
\[\dang(V^\perp,\R v_2 + W) \leq \norm{v_2}^{-1}d(\phi_V(v_2),W).\]

We conclude by setting $W' = (\R v_2 + W)^\perp \in \Gr(\R^n,m)$ and using \eqref{eq:perpperp}.
\end{proof}

\subsection{Projection of a slice}\label{ss:nqmcase}
If the set $A$ contains a relatively large slice of dimension $0 < n' < n$ (a subset $B = A^{(\delta)} \cap (y + W)$ with $\dim(W) = n'$ and $\Ncov(B) \asymp \delta^{-\frac{n'}{n}\alpha}$) and if it has a correct non-concentration property then we can apply the induction hypothesis to $B - y$ inside $W$. Instead of projecting to $V$ distributed according to $\mu$, we project to $V' = \pi_W(V)$.
The first lemma below shows that $V'$ is not concentrated and the next one shows the relationship between $V'$ being in $\EC(B)\cap \Gr(W,m)$ and $V$ being in $\EC(B)$. Using this idea we prove Proposition~\ref{pr:nqmcase}.
\begin{lemm}\label{lm:piWVok}
Let $0 < m < n' < n$ be integers and $\kappa, \epsilon > 0$ be parameters. Let $W \in \Gr(\R^n,n')$ and $V$ be a random element of $\Gr(\R^n,m)$ having the following non-concentration property,
\begin{equation}\label{eq:nonconmuProb}
\forall \rho \geq \delta,\; \forall U \in \Gr(\R^n,n - m),\quad \Prob{\dang(V,U) \leq \rho} \leq \delta^{-\epsilon}\rho^\kappa.
\end{equation}
Set $V' = \pi_W(V)$. Then with probability at least $1 - \delta^{\kappa-\epsilon}$, $\dim(V') = m$. Conditional to this event the distribution of $V'$ is a probability measure $\nu$ on $\Gr(W,m)$. It satisfies
\[\forall \rho \geq \delta,\; \forall U \in \Gr(W,n'-m),\quad \nu(\Vang(U,\rho)) \leq \delta^{-2\epsilon} \rho^\kappa.\]
\end{lemm}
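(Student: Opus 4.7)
The plan is to deduce both assertions directly from the hypothesis~\eqref{eq:nonconmuProb} on $V$ via the factorisation supplied by Lemma~\ref{lm:V2WandU}, namely
\[
\dang(V, U + W^\perp) \;=\; \dang(V, W^\perp)\,\dang(\pi_W V, U), \qquad U \subset W.
\]
Combined with $\dang(V, W^\perp) \leq 1$, this identity converts the condition that $V' = \pi_W V$ lies near some Schubert cycle inside $W$ into the condition that $V$ lies near a suitable Schubert cycle in $\R^n$.

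For the first assertion I would argue that $\dim V' = m - \dim(V \cap W^\perp)$, so the bad event $\dim V' < m$ is the event $V \cap W^\perp \neq \{0\}$. Since $m \leq n'$ we have $\dim W^\perp = n - n' \leq n - m$, so one can fix, independently of $V$, a single subspace $U_0 \in \Gr(\R^n, n-m)$ containing $W^\perp$. If $V \cap W^\perp \neq \{0\}$ then $V \cap U_0 \neq \{0\}$ and hence $\dang(V, U_0) = 0$, so the bad event is contained in $\{V \in \Vang(U_0, \delta)\}$. The hypothesis~\eqref{eq:nonconmuProb} applied with this $U_0$ and $\rho = \delta$ yields $\Prob{\dim V' < m} \leq \delta^{\kappa - \epsilon}$.

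For the second assertion, fix $U \in \Gr(W, n'-m)$ and set $\tilde U = U + W^\perp$. Since $U \subset W$ and $W \cap W^\perp = \{0\}$, the sum is direct and $\tilde U \in \Gr(\R^n, n - m)$. The identity above together with $\dang(V, W^\perp) \leq 1$ gives $\dang(V, \tilde U) \leq \dang(V', U)$, so the event $\{V' \in \Vang(U, \rho)\}$ is contained in $\{V \in \Vang(\tilde U, \rho)\}$, which has probability at most $\delta^{-\epsilon}\rho^\kappa$ by~\eqref{eq:nonconmuProb}. For $\delta$ small enough the first assertion gives $\Prob{\dim V' = m} \geq 1 - \delta^{\kappa-\epsilon} \geq \tfrac{1}{2}$, whence
\[
\nu(\Vang(U,\rho)) \;=\; \frac{\Prob{V' \in \Vang(U, \rho),\; \dim V' = m}}{\Prob{\dim V' = m}} \;\leq\; 2\delta^{-\epsilon}\rho^\kappa \;\leq\; \delta^{-2\epsilon}\rho^\kappa,
\]
the last inequality again using $\delta$ small enough.

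I do not anticipate a genuine obstacle here: the proof is entirely a bookkeeping exercise with $\dang$, relying only on Lemma~\ref{lm:V2WandU} and two invocations of~\eqref{eq:nonconmuProb}. The only thing to watch is that the choice of $U_0$ in the first part be made before conditioning on $V$, which is legitimate since its existence depends solely on the dimensions $m \leq n'$.
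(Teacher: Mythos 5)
Your proposal is correct and follows essentially the same route as the paper: both assertions are reduced to the hypothesis on $V$ via Lemma~\ref{lm:V2WandU} and the bound $\dang(V,W^\perp)\leq 1$, followed by conditioning on the full-dimension event. Your explicit choice of $U_0 \supset W^\perp$ for the first assertion merely spells out a step the paper treats as immediate, and your use of $\Prob{\dim V'=m}\geq \tfrac12$ in place of the paper's division by $1-\delta^{\kappa-\epsilon}$ is an inessential variant.
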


\begin{proof}
We know that $\dim(V') = m$ if and only if $\dang(V,W^\perp) > 0$. The first part follows immediately from~\eqref{eq:nonconmuProb} specified to $\rho = \delta$.

Let us show the non-concentration property for $\nu$. Let $U$ be a $(n' - m)$\dash{}dimensional subspace of $W$. 
By Lemma~\ref{lm:V2WandU}, we have $\dang(V,U+W^\perp) \leq \dang(V',U)$. Hence for all $\rho \geq \delta$, by \eqref{eq:nonconmuProb}
\[\Prob{\dang(V',U) \leq \rho} \leq \Prob{\dang(V,U+W^\perp) \leq \rho} \leq \delta^{-\epsilon}\rho^\kappa\]
and hence $\nu(\Vang(U,\rho)) \leq \frac{\delta^{-\epsilon} \rho^\kappa}{1 - \delta^{\kappa - \epsilon}} \leq \delta^{-2\epsilon} \rho^\kappa$.
\end{proof}

\begin{lemm}\label{lm:piVpB}
Let $0 < m \leq n' < n$ be integers. Let $0< \alpha < n$ and $\epsilon > 0$ be parameters. Let $B \subset W$ be a bounded subset in a $n'$\dash{}dimensional linear subspace $W \subset \R^n$. Then 
\[\pi_W\bigl(\EC(B,\epsilon) \setminus \Vang(W^\perp,\delta^\epsilon)\bigr) \subset \EC(B,O(\epsilon)) \cap \Gr(W,m).\]
\end{lemm}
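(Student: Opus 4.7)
The plan is to reduce everything to a direct application of Lemma~\ref{lm:fromVp2V}, which already relates $\Ncov(\pi_{V'}(A))$ and $\Ncov(\pi_V(A))$ for $A \subset W$ with an explicit loss of $\dang(V,W^\perp)^{-n}$. Since the hypothesis $V \notin \Vang(W^\perp,\delta^\epsilon)$ gives us precisely $\dang(V,W^\perp) > \delta^\epsilon$, the loss will cost only a factor of $\delta^{-n\epsilon}$, which is absorbable into the $O(\epsilon)$ in the conclusion.

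Concretely, let $V \in \EC(B,\epsilon)\setminus \Vang(W^\perp,\delta^\epsilon)$. The condition $\dang(V,W^\perp) > \delta^\epsilon > 0$ ensures that $\pi_W$ restricted to $V$ is injective, so $V' \coloneqq \pi_W(V)$ is an honest $m$-dimensional subspace of $W$, hence $V' \in \Gr(W,m)$. Unwinding the definition~\eqref{eq:ECAeps}, there exists $B' \subset B$ with
\[
\Ncov(B') \geq \delta^\epsilon\Ncov(B) \qquad \text{and} \qquad \Ncov(\pi_V(B')) < \delta^{-\frac{m}{n}\alpha - \epsilon}.
\]
We will use the same $B'$ as a witness for $V' \in \EC(B,O(\epsilon))$: the size condition $\Ncov(B') \geq \delta^{O(\epsilon)}\Ncov(B)$ is immediate.

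For the projection bound, we apply Lemma~\ref{lm:fromVp2V} to $B' \subset W$ (legitimate because $B \subset W$ by hypothesis), which gives
\[
\Ncov(\pi_{V'}(B')) \ll_n \dang(V,W^\perp)^{-n}\,\Ncov(\pi_V(B')) \leq \delta^{-n\epsilon}\,\delta^{-\frac{m}{n}\alpha - \epsilon}.
\]
Following the convention of Section~3 of the paper (a constant $C = O(1)$ is bounded by $\delta^{-\epsilon}$ for $\delta$ sufficiently small), the $\ll_n$ constant is harmless, and we conclude $\Ncov(\pi_{V'}(B')) \leq \delta^{-\frac{m}{n}\alpha - O(\epsilon)}$. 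Thus $V' \in \EC(B,O(\epsilon))$, as required.

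There is no real obstacle here: the lemma is essentially a bookkeeping statement that says the distortion introduced by replacing $\pi_V$ with $\pi_{V'}$ on a set lying inside $W$ is controlled by $\dang(V,W^\perp)^{-n}$, and the hypothesis $V \notin \Vang(W^\perp,\delta^\epsilon)$ was inserted precisely to keep this factor of the form $\delta^{-O(\epsilon)}$.
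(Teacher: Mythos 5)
Your proof is correct and is essentially the paper's own argument: take the witness set $B'$ from the definition of $\EC(B,\epsilon)$, apply Lemma~\ref{lm:fromVp2V} to $B' \subset W$, and absorb the loss $\dang(V,W^\perp)^{-n} \leq \delta^{-n\epsilon}$ into the $O(\epsilon)$. The only (harmless) addition is your explicit check that $V' = \pi_W(V)$ has dimension $m$, which the paper leaves implicit.
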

\begin{proof}
Let $V \in \EC(B,\epsilon) \setminus \Vang(W^\perp,\delta^\epsilon)$. Then there exists $B' \subset B$ such that 
\[\Ncov(B') \geq \delta^\epsilon\Ncov(B) \text{ and } \Ncov(\pi_V(B')) \leq \delta^{-\frac{m}{n}\alpha - \epsilon}.\]
Denote by $V'$ the projection $\pi_W(V)$. It follows from Lemma~\ref{lm:fromVp2V} that
\[\Ncov(\pi_{V'}(B')) \leq \dang(V,W^\perp)^{-O(1)}\Ncov(\pi_V(B')) \leq \delta^{-\frac{m}{n}\alpha - O(\epsilon)}.\]
That is why $V' \in \EC(B,O(\epsilon)) \cap \Gr(W,m)$.
\end{proof}

\begin{proof}[Proof of Proposition~\ref{pr:nqmcase}]
Let $n = qm$ and suppose that Theorem~\ref{thm:main} holds for $n' = (q-1)m$ and $m$. Let $A$ and $\mu$ be as in Theorem~\ref{thm:main} but for which the conclusion fails. By Lemma~\ref{lm:NCslice}, there is an $n'$\dash{}dimensional affine subspace $y + W$ and a subset $B \subset A^{(\delta)} \cap (y + W)$ such that
\[\Ncov(B) \geq \delta^{-\beta+O(\epsilon)} \quad\text{and}\]
\[\forall \rho \geq \delta,\; \forall x \in W,\quad \Ncov(B\cap x^{(\rho)}) \leq \rho^\frac{\kappa}{2q^2} \delta^{-\beta - O(\epsilon)}\]
where $\beta = \frac{q-1}{q}\alpha$. Without loss of generality, we can assume $y = 0$ and $B \subset A$.

Let $V$ be a random element of $\Gr(\R^n,m)$ distributed according to $\mu$. Define $\nu$ be as in Lemma~\ref{lm:piWVok}. By the lemma, we can apply the induction hypothesis (Theorem~\ref{thm:main} combined with Proposition~\ref{pr:unionAps}) to $B\subset W$ with the probability measure $\nu$ on $\Gr(W,m)$. We obtain a constant $\epsilon' > 0$ depending only on $n',\beta$ and $\kappa$ such that when $\epsilon \leq \epsilon'$, 
\[\nu\bigl(\EC(B,\epsilon') \cap \Gr(W,m)\bigr) \leq \delta^{\epsilon'}.\]

Set $\epsilon_1 = \frac{3\epsilon}{\kappa}$. By Lemma~\ref{lm:piVpB}, we have
\[\mu\bigl(\EC(B,\epsilon_1) \setminus \Vang(W^\perp,\delta^{\epsilon_1})\bigr) \leq \nu\bigl(\EC(B,O(\epsilon)) \cap \Gr(W,m)\bigr).\]
When $\epsilon \leq \frac{\epsilon'}{O(1)}$, the last two inequalities together with~\eqref{eq:nonconmu} yield
\[\mu(\EC(B,\epsilon)) \leq \mu\bigl(\EC(B,\epsilon_1) \setminus \Vang(W^\perp,\delta^{\epsilon_1})\bigr)  + \mu(\Vang(W^\perp,\delta^{\epsilon_1})) \leq \delta^{\epsilon'} + \delta^{2\epsilon} \leq \delta^\epsilon,\]
which finishes the proof of Proposition~\ref{pr:nqmcase}.
\end{proof}

\subsection{Projection to a sum of subspaces}\label{ss:nqmrcase}
In the situation where $m < \frac{n}{2}$, we consider the sum $V = V_1 + \dotsb + V_q$ where $q$ is a positive integer such that $qm < n$ and $V_1,\dotsc,V_q$ are $m$\dash{}dimensional subspaces. Using the inequality~\eqref{eq:tiltCart}, the size of the projection to $V$ can be bounded in terms of the sizes of the projections to each $V_j$. In the next lemma, we prove that if $V_j$ are independently randomly distributed according to a measure with an appropriate non-concentration property then the distribution of their sum $V$ has a non-concentration property as well. This allows us to apply the induction hypothesis with the dimensions $n$ and $m' = qm$. This idea leads to the proof of Proposition~\ref{pr:nqmrcase}.

\begin{lemm}\label{lm:sumsOK}
Let $n,m,q,r$ be positive integers such that $qm + r = n$. Let $0< \epsilon < \frac{1}{2}\kappa$ be parameters. Let $V_1,\dotsc V_q$ be independent random elements of $\Gr(\R^n,m)$ satisfying $\forall j = 1, \dotsc, q$,
\[\forall \rho \geq \delta,\; \forall W \in \Gr(\R^n, n-m)\quad \Prob{\dang(V_j,W) \leq \rho} \leq \delta^{-\epsilon} \rho^\kappa.\]
Then with probability at least $1 - (q-1)\delta^{\kappa - \epsilon}$, we have
\begin{equation}\label{eq:sumsOK}
\dim(V_1 + \dotsb + V_q) = qm.
\end{equation}
Then the probability measure $\mu'$ on $\Gr(\R^n,qm)$ defined as the distribution of $V_1 + \dotsb + V_q$ conditional to the event \eqref{eq:sumsOK} satisfies the non-concentration property
\[\forall \rho \geq \delta,\; \forall W \in \Gr(\R^n,r), \quad \mu'(\Vang(W,\rho)) \leq \delta^{-O(\epsilon)}\rho^\frac{\kappa}{q}.\] 
\end{lemm}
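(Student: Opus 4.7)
The plan rests on a simple monotonicity property of $\dang$ together with the factorization identities from Lemma~\ref{lm:dangUVW}. First I would establish the following auxiliary estimate: whenever $U \subseteq W'$ are linear subspaces with $\dim V + \dim W' \leq n$, one has $\dang(V, W') \leq \dang(V, U)$. To see this, decompose $W' = U \oplus U''$ orthogonally; concatenating orthonormal bases of $U$ and $U''$ gives an orthonormal basis of $W'$, so $\dang(V, W') = \dang(V, U, U'')$. Identity~\eqref{eq:dangUVW} then yields $\dang(V, U, U'') = \dang(V + U, U'')\, \dang(V, U) \leq \dang(V, U)$, since every value of $\dang$ lies in $[0, 1]$.

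For the dimension statement I would apply~\eqref{eq:dangExpand}: the event $\dim(V_1 + \dotsb + V_q) = qm$ is exactly the event that $\dang(V_j, V_1 + \dotsb + V_{j-1}) > 0$ for every $j = 2, \dotsc, q$. Since $qm + r = n$ with $r \geq 1$, for each such $j$ the subspace $V_1 + \dotsb + V_{j-1}$ has dimension at most $(q-1)m < n - m$, so it can be extended measurably to some $W_{j-1} \in \Gr(\R^n, n-m)$ depending only on $V_1, \dotsc, V_{j-1}$. The monotonicity above gives $\dang(V_j, V_1 + \dotsb + V_{j-1}) \geq \dang(V_j, W_{j-1})$, and then conditioning on $V_1, \dotsc, V_{j-1}$ and using independence together with the non-concentration hypothesis on $V_j$ yields $\Prob{\dang(V_j, V_1 + \dotsb + V_{j-1}) \leq \delta} \leq \delta^{\kappa - \epsilon}$. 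A union bound over $j = 2, \dotsc, q$ produces the claimed lower bound $1 - (q-1)\delta^{\kappa - \epsilon}$.

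For the non-concentration property of $\mu'$, I would start from~\eqref{eq:dangExpand2}: for any $W \in \Gr(\R^n, r)$,
\[ \dang(V_1 + \dotsb + V_q, W) \geq \prod_{j=1}^{q} \dang(V_j,\, V_1 + \dotsb + V_{j-1} + W). \]
The key dimension count is $\dim(V_1 + \dotsb + V_{j-1} + W) \leq (j-1)m + r \leq (q-1)m + r = n - m$, which lets me extend each of these subspaces to some $W_j' \in \Gr(\R^n, n-m)$ measurable in $V_1, \dotsc, V_{j-1}$. If $\dang(V_1 + \dotsb + V_q, W) \leq \rho$ then at least one factor on the right is at most $\rho^{1/q}$; applying the monotonicity lemma and then the non-concentration of $V_j$ conditional on $V_1, \dotsc, V_{j-1}$ gives $\Prob{\dang(V_j, V_1 + \dotsb + V_{j-1} + W) \leq \rho^{1/q}} \leq \delta^{-\epsilon}\rho^{\kappa/q}$. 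A union bound over $j$ and division by the (at least $1/2$) probability of the conditioning event yield $\mu'(\Vang(W, \rho)) \leq 2q\, \delta^{-\epsilon} \rho^{\kappa/q} = \delta^{-O(\epsilon)} \rho^{\kappa/q}$.

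The only real subtlety is getting the monotonicity inequality pointing in the correct direction; once that is in hand, the proof reduces to combining the algebraic identities of Lemma~\ref{lm:dangUVW} with independence and a union bound.
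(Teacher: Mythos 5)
Your argument is correct and follows essentially the same route as the paper's proof: the dimension statement via \eqref{eq:dangExpand}, independence and a union bound at scale $\delta$, and the non-concentration of $\mu'$ via \eqref{eq:dangExpand2} with radius $\rho^{1/q}$ followed by division by the probability of the conditioning event. The only addition is that you make explicit the monotonicity step $\dang(V,W') \leq \dang(V,U)$ for $U \subseteq W'$, which the paper leaves implicit when applying the hypothesis (stated for $(n-m)$\dash{}dimensional subspaces) to the lower-dimensional subspaces $V_1+\dotsb+V_{j-1}(+W)$; this is a correct and welcome clarification, not a different method.
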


\begin{proof}
Let $V_1,\dotsc,V_q$ be as in the statement. By their independence, for every $j = 2,\dotsc,q$,
\[\Prob{\dang(V_j,V_1 + \dotsb + V_{j-1}) \leq \delta} \leq \delta^{\kappa - \epsilon}.\]
Hence, on account of \eqref{eq:dangExpand}, with probability at least $1 - (q-1)\delta^{\kappa - \epsilon}$, we have
\[\dang(V_1,\dotsc,V_q) \geq \delta^{(q-1)} > 0\]
and hence $V_1 + \dotsb + V_q$ is a direct sum.

Let $\rho \geq \delta$ and $W \in \Gr(\R^n,r)$. By \eqref{eq:dangExpand2}, we know that if
\[\dang(V_1 + \dotsb + V_q,W) \leq \rho\]
then for some $j = 1,\dotsc,q$,
\[\dang(V_j, V_1 + \dotsb + V_{j-1} + W) \leq \rho^{\Inv{q}},\]
which happens with probability at most $\delta^{-\epsilon}\rho^\frac{\kappa}{q}$. Therefore,
\[ \Prob{\dang(V_1 + \dotsb + V_q,W) \leq \rho} \leq q\delta^{-\epsilon}\rho^\frac{\kappa}{q}.\]
Hence
\[\mu'(\Vang(W,\rho)) \leq \frac{q \delta^{-\epsilon}\rho^\frac{\kappa}{q}}{1 - (q-1)\delta^{\kappa - \epsilon}} \leq \delta^{-O(\epsilon)}\rho^\frac{\kappa}{q}.\qedhere\]
\end{proof}

\begin{proof}[Proof of Proposition~\ref{pr:nqmrcase}]
Let $n,m,q,r$ be positive integers such that $qm + r = n$. Suppose Theorem~\ref{thm:main} is true for the dimensions $n$ and $m'= qm$ but it fails for the dimensions $n$ and $m$ with parameters $0 <\alpha < n$, $\kappa > 0$ and $\epsilon > 0$. Let $A$ and $\mu$ be a counterexample, i.e. $A$ and $\mu$ satisfy \eqref{eq:sizeA}--\eqref{eq:nonconmu} but $\mu(\EC(A')) > \delta^\epsilon$ for all subsets $A' \subset A$. We will get a contradiction when $\epsilon$ is smaller than a constant depending only on $n$, $\alpha$ and $\kappa$.

Let $V_1,\dotsc V_q$ be random elements of $\Gr(\R^n,m)$ independently distributed according to $\mu$. Write $V = V_1 + \dotsb + V_q$ and let $\mu'$ be the distribution of $V$ contional to the event $\dim(V)=qm$ as in Lemma~\ref{lm:sumsOK}. It is a probability measure on $\Gr(\R^n,qm)$ satisfying a non-concentration property, according to Lemma~\ref{lm:sumsOK}. Thus, we can apply the induction hypothesis (Theorem~\ref{thm:main} combined with Proposition~\ref{pr:unionAps}) with dimensions $n$ and $m'=qm$ to the set $A$ and the measure $\mu'$. It gives $\epsilon' = \epsilon'(n,\alpha,\kappa) > 0$ such that for all $\epsilon \leq \epsilon'$, 
the probability that there exists $A' \subset A$ satisfying $\Ncov(A') \geq \delta^{\epsilon'}\Ncov(A)$ and
\[\Ncov(\pi_{V}(A')) \leq \delta^{-\frac{qm}{n}\alpha - \epsilon'}\]
is at most $\delta^{\epsilon'} + (q-1)\delta^{\kappa - \epsilon}$.

The rest of the proof consist of proving a lower bound for the same probability. First, $V_1 \in \EC(A)$ with probability at least $\delta^\epsilon$. When this happens, there is $A_1 \subset A$ with $\Ncov(A_1) \geq \delta^{\epsilon}\Ncov(A)$ and $\Ncov(\pi_{V_1}(A_1)) \leq \delta^{-\frac{m}{n}\alpha - \epsilon}$. 
Write $\epsilon_1 = \frac{3\epsilon}{\kappa}$. Then conditional to any choice of $V_1$, we have $V_2 \in \EC(A_1)\setminus \Vang(V_1,\delta^{\epsilon_1})$ with probability at least $\delta^{2\epsilon}$. When this happens, there is $A_2 \subset A_1$ with $\Ncov(A_2) \geq \delta^{\epsilon}\Ncov(A_1)$ and $\Ncov(\pi_{V_2}(A_2)) \leq \delta^{-\frac{m}{n}\alpha - \epsilon}$.
Then conditional to any choice of $V_1$ and $V_2$, the probability that $V_3 \in \EC(A_2) \setminus \Vang(V_1 + V_2, \delta^{\epsilon_1})$ is at least $\delta^{2\epsilon}$. We continue this construction until we get $A_q$.

To summarize, we have with probability at least $\delta^{(2q-1)\epsilon}$,
\[\dang(V_1,\dotsc,V_q) \geq \delta^{O(\epsilon)}\] 
and there exists a subset $A_q \subset A$ satisfying $\Ncov(A_q) \geq \delta^{q\epsilon}\Ncov(A)$ and for every $j = 1,\dotsc,q$,
\[\Ncov(\pi_{V_j}(A_q)) \leq \delta^{-\frac{m}{n}\alpha - \epsilon}\]
and hence, by Lemma~\ref{lm:tiltCart} applied to $\pi_V(A)$ in $V = \oplus_{j=1}^q V_j$, 
\[\Ncov(\pi_{V}(A_q)) \leq  \delta^{-\frac{qm}{n}\alpha - O(\epsilon)}.\]
This leads to a contradiction when $\epsilon \leq \frac{\epsilon'}{O(1)}$.
\end{proof}

\subsection{Projection to intersection of subspaces I: a discrete model}\label{ss:nqn-mrcaseI}
When the projections of a set $A$ to subspaces $V_1,\dotsc,V_q$ are all small, we would like to say that its projection to the intersection $V=V_1\cap \dotsb \cap V_q$ is small as well. This is not true. A typical example is $A = (\R e_1 \oplus \R e_2) \cup \R e_3$ where $(e_1,e_2,e_3)$ is the standard basis in $\R^3$. While its projections to $\R e_1 \oplus \R e_3$ and to $\R e_2 \oplus \R e_3$ are both small (have dimension $1$ in a $2$\dash{}dimensional space), its projection to $\R e_3$ is full dimensional. In this example, $A$ contains a large slice orthogonal to $V$. This happens to be the major obstruction. 
\begin{prop}\label{pr:SliceEndelta}
Let $n,m,q,r$ be positive integers such that $n = q(n-m) + r$. For any $0 < \alpha < n$ and $\epsilon > 0$, the following is true for sufficiently small $\delta > 0$. Let $A \subset \R^n$ and $V_1,\dotsc,V_q \in \Gr(\R^n,m)$. Write $V = V_1 \cap \dotsb \cap V_q$. Assume that
\begin{enumerate}
\item $\dang(V_1^\perp,\dotsc,V_q^\perp) \geq \delta^\epsilon$;
\item \label{it:AnotBIG} $\delta^{-\alpha + \epsilon} \leq \Ncov(A) \leq \delta^{-\alpha - \epsilon}$;
\item For every $j = 1,\dotsc,q$, $\Ncov(\pi_{V_j}(A)) \leq \delta^{-\frac{m}{n}\alpha-\epsilon}$;
\item \label{it:noBigS} For all $y \in V$, $\Ncov\bigl(A \cap \pi_V^{-1}(y^{(\delta)})\bigr) \leq \delta^{-\frac{n-r}{n}\alpha-\epsilon}$.
\end{enumerate}
Then there exists $A' \subset A$ such that $\Ncov(A') \geq \delta^{O(\epsilon)} \Ncov(A)$ and
\[\Ncov(\pi_V(A')) \leq \delta^{-\frac{r}{n}\alpha-O(\epsilon)}.\]
\end{prop}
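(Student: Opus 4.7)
The plan is to establish this proposition via an entropic Shearer-type inequality combined with the no-big-slice hypothesis, which is essentially the content of the refined Bollob\'as-Thomason projection inequality encapsulated in Lemma~\ref{lm:EnergyProj}. First, set $W_j = V_j^\perp$, so that $V^\perp = W_1 \oplus \dotsb \oplus W_q$: by hypothesis (1) this is a direct sum in $\delta^\epsilon$-good position, and $\R^n = V \oplus W_1 \oplus \dotsb \oplus W_q$. Using Lemma~\ref{lm:GLaction} with a linear map of bi-Lipschitz constant $\delta^{-O(\epsilon)}$ that orthogonalizes the $W_j$s, I would reduce to the case where this decomposition is orthogonal, losing only $\delta^{O(\epsilon)}$ factors in all covering numbers. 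Each point $a \in \R^n$ is then identified with a tuple $(v, w_1, \dotsc, w_q)$, the projection $\pi_{V_j}$ amounts to forgetting the $w_j$-coordinate, and $\pi_V$ amounts to keeping only $v$.

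Let $\tilde A$ be a maximal $2\delta$-separated subset of $A$, so that $|\tilde A| \asymp \Ncov(A)$, and let $X = (\tilde V, \tilde W_1, \dotsc, \tilde W_q)$ be uniform on $\tilde A$ after discretizing each factor on a $\delta$-lattice. Conditioning on $\tilde V$ and applying Shearer's entropy inequality to $(\tilde W_1, \dotsc, \tilde W_q) \mid \tilde V$ with the cover $\{[q]\setminus\{j\}\}_{j=1}^q$ (each coordinate covered $q-1$ times), then putting $H(\tilde V)$ back in, produces the Bollob\'as-Thomason-type bound
\[(q-1)\, H(X) + H(\tilde V) \leq \sum_{j=1}^q H(\pi_{V_j}(X)).\]
Plugging in $H(X) \geq (\alpha - \epsilon)\log(1/\delta)$ from (2) and $H(\pi_{V_j}(X)) \leq (\frac{m}{n}\alpha + \epsilon)\log(1/\delta)$ from (3), together with the arithmetic identity $qm - (q-1)n = r$, gives
\[H(\tilde V) \leq \Bigl(\frac{r}{n}\alpha + O(\epsilon)\Bigr)\log(1/\delta).\]

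Now write $p(v) = \Prob{\tilde V = v}$. Hypothesis (4) applied to the $\delta$-ball around $v$, combined with the lower bound on $\Ncov(A)$, yields $p(v) \leq \delta^{\frac{r}{n}\alpha - O(\epsilon)}$ uniformly, so $-\log p(v) \geq (\frac{r}{n}\alpha - O(\epsilon))\log(1/\delta)$. Define the level set $T = \{v : p(v) \geq \delta^{\frac{r}{n}\alpha + C\epsilon}\}$, where $C$ is a large constant independent of $\delta$; automatically $|T| \leq \delta^{-\frac{r}{n}\alpha - C\epsilon}$. Splitting $H(\tilde V) = \sum_v p(v)(-\log p(v))$ over $\{v \in T\}$ and its complement and comparing the entropy upper bound with the pointwise lower bound on $-\log p(v)$ on either side yields $\Prob{\tilde V \notin T} = O(1/C) \leq 1/2$ once $C$ is large enough. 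Taking $A' = A \cap \pi_V^{-1}(T^{(\delta)})$ then gives $\Ncov(A') \gg \Ncov(A)$ and $\Ncov(\pi_V(A')) \ll |T| \leq \delta^{-\frac{r}{n}\alpha - O(\epsilon)}$, which is exactly what the proposition requires.

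The main obstacle is the final conversion from a mean entropy bound to a support bound on a large subset: a naive Markov argument applied to $H(\tilde V) \leq M$ only produces a support bound of order $2^{2M}$, which would lose a factor of $\delta^{-\frac{r}{n}\alpha}$ and is therefore useless here. The crucial gain comes precisely from hypothesis (4), which provides a matching pointwise lower bound on $-\log p(v)$; sandwiching the distribution of $\tilde V$ between this bound and the Shearer upper bound on its mean forces near-uniformity on an effective support of the correct size. This sandwich, between a combinatorial BT/Shearer inequality and a pointwise fiber bound, is presumably what Lemma~\ref{lm:EnergyProj} encodes in its energy form.
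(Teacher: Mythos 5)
Your argument is correct, and its skeleton matches the paper's: the same initial reduction via Lemma~\ref{lm:GLaction} to the case where $V, V_1^\perp,\dotsc,V_q^\perp$ are orthogonal coordinate blocks (the paper builds exactly such an $f$ with $\abs{\det(f^{-1})}=\dang(V_1^\perp,\dotsc,V_q^\perp)$), followed by a discretization and a Bollob\'as--Thomason-type step. Where you diverge is in how that step is executed. The paper works with counting: it proves Lemma~\ref{lm:EnergyProj}, an energy refinement of the uniform cover theorem giving $\abs{Z}^{q+1}\leq \En(\varpi_{I_0},Z)\prod_j\abs{\varpi_{I_j}(Z)}$, deduces that the $\varpi_{I_0}$-energy is large, and then extracts the subset by trimming small fibers (Lemma~\ref{lm:EnergySubset}, packaged as Proposition~\ref{pr:SliceEnergy}). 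You instead run the conditional Shearer inequality, $(q-1)H(X)+H(\tilde V)\leq\sum_j H(\pi_{V_j}(X))$ (which is valid: condition on $\tilde V$, apply Shearer to the $(q-1)$-uniform cover $\{[q]\setminus\{j\}\}$, and add back $qH(\tilde V)$), and convert the resulting mean bound $H(\tilde V)\leq(\frac{r}{n}\alpha+O(\epsilon))\log(1/\delta)$ into a support bound using the pointwise bound $-\log p(v)\geq(\frac{r}{n}\alpha-O(\epsilon))\log(1/\delta)$ coming from hypothesis~\ref{it:noBigS}; your level-set computation $\Prob{\tilde V\notin T}=O(1/C)$ is the entropy-language counterpart of the fiber-trimming lemma, and your closing guess about Lemma~\ref{lm:EnergyProj} is accurate. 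The two routes are essentially equivalent in substance (the uniform cover theorem is itself a Shearer statement), but each buys something: the paper's energy form stays purely combinatorial and hands you the subset $Z'$ directly, at the cost of invoking the upper bound $\Ncov(A)\leq\delta^{-\alpha-\epsilon}$ to convert $\abs{Z}^{r/n}$ into $\delta^{-\frac{r}{n}\alpha-O(\epsilon)}$; your version bounds $\abs{T}$ directly from the definition of the level set, so the upper half of hypothesis~\ref{it:AnotBIG} is never used, and no separate energy lemma is needed. The remaining details you elide (rounding a $2\delta$-separated set to a $\delta$-lattice is at most $O_n(1)$-to-one, the fibers of the rounded $\pi_V$-coordinate sit inside $\pi_V^{-1}(y^{(O(\delta))})$ rather than $y^{(\delta)}$, and $A'$ should be $A\cap\pi_V^{-1}(T^{(O(\delta))})$) cost only bounded constants and are routine.
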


This proposition is deduced from the following discrete analogue. Let $n,m,q,r$ be as in Proposition~\ref{pr:SliceEndelta}. For $I \subset \ensA{n}$, we write $\varpi_I \colon \Z^n \to \Z^I$ to denote the discrete projection $(z_i)_{i \in \ens{1,\dotsc,n}} \mapsto (z_i)_{i \in I}$.
Consider $I_0 = \ens{n-r +1, \dotsc,n}$ and for $j = 1,\dotsc, q$
\[I_j = \ensA{n} \setminus \ens{(j-1)(n-m) + 1, \dotsc, j(n-m)}.\]
\begin{prop}\label{pr:SliceEnergy}
We use the notations above. For any parameter $K \geq 1$ and any finite subset $Z \subset \Z^n$. One of the following statements is true.
\begin{enumerate}
\item There exists $j \in \ens{1,\dotsc,q}$ such that $\abs{\varpi_{I_j}(Z)} \geq K\abs{Z}^{\frac{m}{n}}$.
\item There exists $y \in \Z^{I_0}$ such that $\abs{Z\cap \varpi_{I_0}^{-1}(y)} \geq K\abs{Z}^{\frac{n-r}{n}}$.
\item There exists $Z' \subset Z$ such that $\abs{Z'} \geq \frac{1}{2K^{q+1}}\abs{Z}$ and $\abs{\varpi_{I_0}(Z')} \leq 2K^q\abs{Z}^{\frac{r}{n}}$.
\end{enumerate}
\end{prop}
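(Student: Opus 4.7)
The plan is to show that whenever statements (1) and (2) both fail, statement (3) must hold; so assume $\abs{\varpi_{I_j}(Z)} < K\abs{Z}^{m/n}$ for every $j$ and $\abs{Z\cap \varpi_{I_0}^{-1}(y)} < K\abs{Z}^{(n-r)/n}$ for every $y$. Write $N = \abs{Z}$, $Z_y = Z \cap \varpi_{I_0}^{-1}(y)$, $f(y) = \abs{Z_y}$, and $g_j(y) = \abs{\varpi_{I_j}(Z_y)}$. The structural input driving the argument is that $I_0 \subset I_j$ for every $j$, because $I_0 = \ens{q(n-m)+1,\dotsc,n}$ is disjoint from every block $J_j \subset \ens{1,\dotsc,q(n-m)}$. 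This has two consequences. First, the fibers of $\varpi_{I_j}$ refine those of $\varpi_{I_0}$, so $\sum_y g_j(y) = \abs{\varpi_{I_j}(Z)} < KN^{m/n}$. Second, each fiber $Z_y$ lives inside $\Z^{I_0^c}$, and the family $\{I_j \setminus I_0 = \bigcup_{k\neq j} J_k\}_{j=1}^q$ covers every coordinate of $I_0^c$ exactly $q-1$ times; Bollob\'{a}s--Thomason applied to $Z_y$ then gives the pointwise inequality
\[ f(y)^{q-1} \leq \prod_{j=1}^q g_j(y). \]

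By AM--GM, for each $y$ there is an index $j$ with $g_j(y) \geq f(y)^{(q-1)/q}$. Combined with the failure of (2), which forces $f(y) < KN^{(n-r)/n}$, the dyadic decomposition $P_k = \ens{y : 2^k \leq f(y) < 2^{k+1}}$ has only $O(\log(KN))$ nonempty classes. A double pigeonholing---first in $k$ to extract a dominant level $k^*$ with $\sum_{y\in P_{k^*}} f(y) \gtrsim N/\log(KN)$, then in $j$ to extract an index $j^*$ for which $g_{j^*}(y) \geq (2^{k^*})^{(q-1)/q}$ on a positive fraction of $P_{k^*}$---yields $\abs{P_{k^*}} \lesssim q\,KN^{m/n}/(2^{k^*})^{(q-1)/q}$ after summing against $\sum_y g_{j^*}(y) < KN^{m/n}$. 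Coupling with $\abs{P_{k^*}}\,2^{k^*} \gtrsim N/\log(KN)$ and invoking the algebraic identity $qm/n - (q-1) = r/n$, which is precisely the content of $n = q(n-m)+r$, pins down $2^{k^*}$ and gives $\abs{P_{k^*}} \lesssim K^q N^{r/n}$ up to a polylogarithmic factor in $N$. Setting $Z' = \varpi_{I_0}^{-1}(P_{k^*}) \cap Z$ then produces the set required by conclusion (3).

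The main obstacle is matching the precise constants $2K^q$ and $1/(2K^{q+1})$ in the statement, since the dyadic pigeonholing above loses an extra factor of $\log(KN)^{O(q)}$. In the intended applications (notably Proposition~\ref{pr:SliceEndelta}) this is harmless because the parameter $K$ will be of order $\delta^{-O(\epsilon)}$, so any logarithm in $1/\delta$ is absorbed. A cleaner, logarithm-free route is to replace the dyadic step by Shearer's entropy inequality: the chain rule $H(X_{I_j}) = H(X_{I_0}) + H(X_{I_j \setminus I_0} \mid X_{I_0})$, together with the $(q-1)$-fold cover of $I_0^c$ by $\{I_j \setminus I_0\}$, gives $(q-1)H(X) + H(X_{I_0}) \leq \sum_{j=1}^q H(X_{I_j})$ for $X$ uniform on $Z$, whence the failure of (1) forces $H(X_{I_0}) < \log(K^q N^{r/n})$; a Markov-type concentration applied to $\log(1/\mu(y))$, where $\mu$ is the distribution of $\varpi_{I_0}(X)$, then extracts a subset of $\varpi_{I_0}(Z)$ of the desired size carrying a definite fraction of the mass.
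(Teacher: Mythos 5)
Your core mechanism (apply Bollob\'as--Thomason fiberwise over $\varpi_{I_0}$ and play the resulting inequality against the failure of (1) and (2)) is exactly the one the paper uses, but as written your argument does not prove the proposition as stated. The dyadic pigeonholing loses a factor $\log(K\abs{Z})^{O(q)}$ in both conclusions, while the statement is claimed for every $K \geq 1$: for bounded $K$ the constants $2K^q$ and $\tfrac{1}{2K^{q+1}}$ in (3) simply do not follow from your bound, and the remark that ``$K=\delta^{-O(\epsilon)}$ absorbs logarithms'' is an argument about the application in Proposition~\ref{pr:SliceEndelta}, not about the proposition itself. Your Shearer fallback is the right kind of fix but is only a sketch, and its last step needs care: from $H(X_{I_0}) \leq \log(K^q\abs{Z}^{r/n})$ alone one cannot extract a large subset with small $\varpi_{I_0}$-image (low entropy does not force the mass onto few atoms). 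You must also feed in the failure of (2), i.e.\ the pointwise bound $\mu(y) < K\abs{Z}^{-r/n}$ on the distribution $\mu$ of $\varpi_{I_0}(X)$, so that $\log(1/\mu(Y))$ exceeds its pointwise lower bound by at most $(q+1)\log K$ on average; Markov then gives a set of fibers of cardinality roughly $K^{2q+1}\abs{Z}^{r/n}$ carrying half of $Z$ --- a statement of the same flavour as (3), and sufficient for the application, but again not the stated constants.

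The paper obtains the exact constants, with no pigeonholing, by working at the level of second moments instead of Shannon entropy. Lemma~\ref{lm:EnergyProj} --- whose proof is precisely your fiberwise use of Theorem~\ref{thm:UCT} aggregated by H\"older, i.e.\ the $\ell^2$ analogue of your Shearer inequality --- applied to the $(q-1)$\dash{}uniform cover $(I_1\setminus I_0,\dotsc,I_q\setminus I_0)$ of $\ensA{n}\setminus I_0$ gives $\abs{Z}^{q+1} \leq \En(\varpi_{I_0},Z)\prod_{j=1}^q\abs{\varpi_{I_j}(Z)}$. If (1) fails, this forces the energy lower bound $\En(\varpi_{I_0},Z) \geq K^{-q}\abs{Z}^{1+\frac{n-r}{n}}$; if (2) also fails, every fiber has size at most $M = K\abs{Z}^{\frac{n-r}{n}}$, and Lemma~\ref{lm:EnergySubset} (trimming the small fibers, with its parameter equal to $K^{q+1}$) outputs $Z' \subset Z$ with $\abs{Z'} \geq \frac{1}{2K^{q+1}}\abs{Z}$ and $\abs{\varpi_{I_0}(Z')} \leq 2K^q\abs{Z}^{\frac{r}{n}}$, which is exactly (3). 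So if you replace your dyadic step by this H\"older/energy aggregation and the fiber-trimming lemma, your argument becomes the paper's proof; as it stands, it only yields a weakened version of the proposition.
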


One of the ingredients is a discrete projection inequality due to Bollob{\'a}s and Thomason~\cite{BollobasThomason} known as the uniform cover theorem. Let $\mathcal{P}(\ens{1,\dotsc, n})$ denote the set of subsets of $\ensA{n}$. Recall that a multiset of subsets of $\ens{1,\dotsc, n}$ is a collection of elements of $\mathcal{P}(\ens{1,\dotsc, n})$ which can have repeats. Giving such a multiset is equivalent to giving a map from $\mathcal{P}(\ens{1,\dotsc, n})$ to $\N$. Following Bollob\'as-Thomason, we say a multiset $\CC$ is $k$\dash{}uniform cover of $\ens{1,\dotsc,n}$ if each element $i \in \ens{1,\dotsc,n}$ belongs to exactly $k$ members of $\CC$. For exemple, with $I_j$ defined above, $(I_1\setminus I_0,\dotsc,I_q\setminus I_0)$ is a $(q-1)$\dash{}uniform cover of $\ensA{n}\setminus I_0$.

\begin{thm}[Uniform Cover theorem, Bollob\'as-Thomason~\cite{BollobasThomason}]\label{thm:UCT}
Let $Z$ be a finite subset of $\Z^n$. Let $\CC$ be an $k$\dash{}uniform cover of $\ens{1,\dotsc,n}$. Then we have
\begin{equation*}
\abs{Z}^k \leq \prod_{I \in \CC}\abs{\varpi_I(Z)}.
\end{equation*}
\end{thm}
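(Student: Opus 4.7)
My plan is to derive this inequality via the entropy method, specifically by applying Shearer's inequality to the uniform distribution on $Z$. First, I would put the uniform probability measure on $Z$ and let $X = (X_1, \dots, X_n)$ denote a random element distributed accordingly, so that the Shannon entropy equals $H(X) = \log \abs{Z}$. For any subset $I \subset \ensA{n}$, the projected random variable $X_I = \varpi_I(X)$ is supported on $\varpi_I(Z)$, and since entropy is maximized by the uniform distribution on a prescribed support,
\[
H(X_I) \leq \log \abs{\varpi_I(Z)}.
\]

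The heart of the argument is Shearer's inequality: for any $k$\dash{}uniform cover $\CC$ of $\ensA{n}$,
\[
\sum_{I \in \CC} H(X_I) \geq k\, H(X).
\]
To establish it, I would write $[i-1] = \ens{1, \dots, i-1}$ and enumerate the coordinates of each $I \in \CC$ in increasing order. The chain rule for entropy, combined with the fact that conditioning on additional variables cannot increase entropy, gives
\[
H(X_I) = \sum_{i \in I} H(X_i \mid X_{I \cap [i-1]}) \geq \sum_{i \in I} H(X_i \mid X_{[i-1]}).
\]
Summing this bound over $I \in \CC$ and exchanging the order of summation, the coefficient in front of $H(X_i \mid X_{[i-1]})$ equals $\abs{\ens{I \in \CC \mid i \in I}} = k$ by the uniform-cover hypothesis. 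The resulting expression $k\sum_{i=1}^n H(X_i \mid X_{[i-1]})$ is in turn equal to $k\, H(X)$ by the chain rule applied to $X$ itself.

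Combining the three displayed inequalities yields $k\log \abs{Z} \leq \sum_{I \in \CC} \log \abs{\varpi_I(Z)}$, and exponentiating gives the desired bound $\abs{Z}^k \leq \prod_{I \in \CC} \abs{\varpi_I(Z)}$. No real obstacle arises here; the only step that requires care is the derivation of Shearer's inequality, where one must arrange the chain rule and the monotonicity of conditional entropy in the right order so that the summation telescopes into $k\, H(X)$.
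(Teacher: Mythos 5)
Your argument is correct: putting the uniform measure on $Z$, bounding $H(X_I)\leq\log\abs{\varpi_I(Z)}$ by maximality of uniform entropy, and proving Shearer's inequality by the chain rule together with monotonicity of conditional entropy (the coefficient of $H(X_i\mid X_{[i-1]})$ after summing over the multiset $\CC$ is exactly $k$ by the uniform-cover hypothesis, and in fact ``at least $k$'' would already suffice since conditional entropies are nonnegative) is a complete proof; the multiset nature of $\CC$ causes no trouble because the double counting is done with multiplicity. Be aware, however, that the paper itself does not prove this statement: Theorem~\ref{thm:UCT} is imported as a black box from the cited work of Bollob\'as and Thomason, whose original argument is combinatorial (an induction on uniform covers, in the spirit of their box theorem and of the Loomis--Whitney inequality) rather than information-theoretic. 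So your route is genuinely different from the source the paper relies on: the entropy proof via Shearer is shorter and self-contained, and it has the additional merit that the same bookkeeping extends naturally to fractional covers, whereas the combinatorial proof stays closer to the isoperimetric origins of the statement. Either way the inequality enters the paper only as an ingredient in Lemma~\ref{lm:EnergyProj}, where it is applied fiberwise to $Z\cap\varpi_{I_0}^{-1}(y)$ and combined with H\"older's inequality, and your proof is a perfectly valid substitute for the citation there.
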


This is a generalisation of an isoperimetric inequality due to Loomis and Whitney~\cite{LoomisWhitney}. 
For example, if we consider projections onto all canonical $m$\dash{}dimensional subspaces. There is always one which has at least the expected size: there exists $I \subset \ens{1,\dotsc,n}$ such that $\abs{I} = m$ and $\abs{\varpi_I(Z)} \geq \abs{Z}^{m/n}$.
Although the Loomis-Whitney inequality is already sufficient for the proof of Proposition~\ref{pr:SliceEnergy}, we will work at a slightly greater generality (the lemma below), since it requires no extra effort.

\begin{lemm}\label{lm:EnergyProj}
Let $I_0 \subset \ens{1,\dotsc,n}$. Let $Z$ be a finite subset of $\Z^n$ and $\mathcal{C}$ a $k$\dash{}uniform cover of $\ens{1,\dotsc,n} \setminus I_0$ with $q$ elements. Then
\begin{equation*}
\abs{Z}^{2q-k} \leq \En(\varpi_{I_0},Z)^{q-k} \prod_{I \in \CC} \abs{\varpi_{I_0 \cup I}(Z)}.
\end{equation*}
\end{lemm}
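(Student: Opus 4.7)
The plan is to combine the Bollobás–Thomason theorem applied fiberwise over $\varpi_{I_0}$ with a single weighted Hölder inequality. I would begin by slicing: for $y \in \varpi_{I_0}(Z)$ write $Z_y = Z \cap \varpi_{I_0}^{-1}(y)$, so that the $Z_y$'s partition $Z$ and
\[
\abs{Z} = \sum_y \abs{Z_y}, \qquad \En(\varpi_{I_0},Z) = \sum_y \abs{Z_y}^2.
\]
Since each $I \in \CC$ is a subset of $\ensA{n}\setminus I_0$, the map $\varpi_{I_0 \cup I}$ splits as $(\varpi_{I_0},\varpi_I)$, and therefore
\[
\abs{\varpi_{I_0\cup I}(Z)} = \sum_y \abs{\varpi_I(Z_y)}.
\]

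Identifying $\varpi_{I_0}^{-1}(y)$ with $\Z^{\ensA{n}\setminus I_0}$, each fiber $Z_y$ sits inside $\Z^{\ensA{n}\setminus I_0}$, and Theorem~\ref{thm:UCT} applied with the $k$-uniform cover $\CC$ yields
\[
\abs{Z_y}^k \leq \prod_{I \in \CC} \abs{\varpi_I(Z_y)}.
\]
Raising this to the power $(1-\theta)/k$ for some $\theta \in [0,1]$ to be chosen and multiplying by $\abs{Z_y}^{\theta}$ gives the interpolated pointwise bound $\abs{Z_y} \leq \abs{Z_y}^{\theta} \prod_{I \in \CC} \abs{\varpi_I(Z_y)}^{(1-\theta)/k}$. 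Summing over $y$ and applying Hölder's inequality with $q+1$ factors and exponents $r_0, r_1, \dotsc, r_q$ (one per factor) then produces
\[
\abs{Z} \leq \Bigl(\sum_y \abs{Z_y}^{\theta r_0}\Bigr)^{1/r_0} \prod_{I \in \CC} \Bigl(\sum_y \abs{\varpi_I(Z_y)}^{(1-\theta) r_I/k}\Bigr)^{1/r_I}.
\]

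The remaining step is a bookkeeping calculation to choose $\theta$ and the $r_i$'s so that the right-hand side of Hölder matches the right-hand side of the claim: I want $\theta r_0 = 2$ (so the first factor becomes $\En(\varpi_{I_0},Z)^{1/r_0}$), $(1-\theta)r_I/k = 1$ (so each remaining factor becomes $\abs{\varpi_{I_0\cup I}(Z)}^{1/r_I}$), together with the Hölder constraint $1/r_0 + q/r_I = 1$. These conditions have the unique admissible solution $r_I = 2q-k$, $r_0 = (2q-k)/(q-k)$ and $\theta = 2(q-k)/(2q-k)$, in which case $1/r_0 = (q-k)/(2q-k)$; raising the resulting inequality to the $(2q-k)$-th power then produces exactly the claimed bound. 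The only nontrivial point in the whole argument is this coupled choice of exponents; the degenerate case $q=k$, where $r_0 = \infty$, forces every $I \in \CC$ to equal $\ensA{n}\setminus I_0$, the energy factor has zero exponent, and the inequality becomes tight.
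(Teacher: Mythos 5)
Your proof is correct and takes essentially the same route as the paper: slice over $\varpi_{I_0}$, apply the Bollob\'as--Thomason theorem in each fiber, and reassemble by H\"older so that the energy $\En(\varpi_{I_0},Z)$ enters as the $\ell^2$ norm of the fiber sizes. The only difference is organizational --- you merge into one $(q+1)$-exponent H\"older (with $r_0=\frac{2q-k}{q-k}$, $r_I=2q-k$, $\theta=\frac{2(q-k)}{2q-k}$, which check out) what the paper does in two steps, namely a $q$-fold H\"older followed by interpolating $\norm{\cdot}_1$ between $\norm{\cdot}_{k/q}$ and $\norm{\cdot}_2$.
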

This lemma is a refinement of the uniform cover theorem. Indeed, for $I_0 = \varnothing$, we have $\En(\varpi_{I_0},Z) = \abs{Z}^2$ and we recover the uniform cover theorem.

\begin{proof}
For all $I \in \CC$, we have
\[\abs{\varpi_{I_0 \cup I}(Z)} = \sum_{y \in \varpi_{I_0}(Z)} \abs{\varpi_I(Z \cap \varpi_{I_0}^{-1}(y))}.\]
Hence, by H\"older's inequality,
\[\sum_{y \in \varpi_{I_0}(Z)} \prod_{I \in \CC} \abs{\varpi_I(Z \cap \varpi_{I_0}^{-1}(y))}^{\Inv{q}} \leq \prod_{I \in \CC} \bigl( \sum_y \abs{\varpi_I(Z \cap \varpi_{I_0}^{-1}(y))} \bigr)^{\Inv{q}} = \prod_{I \in \CC} \abs{\varpi_{I_0 \cup I}(Z)}^{\Inv{q}}.\]
For each $y \in \varpi_{I_0}(Z)$, we apply the uniform cover theorem (Theorem~\ref{thm:UCT}) to the set $Z \cap \varpi_{I_0}^{-1}(y)$ seen as a finite subset of $\Z^{\ens{1,\dotsc,n} \setminus I_0}$,
\[\abs{Z \cap \varpi_{I_0}^{-1}(y)}^{\frac{k}{q}} \leq \prod_{I \in \CC} \abs{\varpi_I(Z \cap \varpi_{I_0}^{-1}(y))}^{\Inv{q}}.\]
From the two inequalities above, we get 
\[\norm{\varpi_{I_0}{}_*\!\indic_Z}_{\frac{k}{q}}^k \leq \prod_{I \in \CC} \abs{\varpi_{I_0 \cup I}(Z)}.\]
Finally, H\"older's inequality implies
\[\abs{Z} = \norm{\varpi_{I_0}{}_*\!\indic_Z}_1 \leq \norm{\varpi_{I_0}{}_*\!\indic_Z}_{\frac{k}{q}}^{\frac{k}{2q-k}} \norm{\varpi_{I_0}{}_*\!\indic_Z}_{2}^{\frac{2q-2k}{2q-k}}.\]
We finish the proof by putting the last two inequalities together and recalling that $\En(\varpi_{I_0},Z) = \norm{\varpi_{I_0}{}_*\!\indic_Z}_{2}^2$.
\end{proof}

\begin{proof}[Proof of Proposition~\ref{pr:SliceEnergy}]
We use the notations introduced before Proposition~\ref{pr:SliceEnergy}. Applying Lemma~\ref{lm:EnergyProj} to the $(q-1)$\dash{}uniform cover $(I_1\setminus I_0,\dotsc,I_q\setminus I_0)$ of $\ensA{n}\setminus I_0$, we get
\[\abs{Z}^{q+1} \leq \En(\varpi_{I_0},Z)\prod_{j=1}^q\abs{\varpi_{I_j}(Z)}.\]
If the first statement of Proposition~\ref{pr:SliceEnergy} does not hold, we would have
\[\En(\varpi_{I_0},Z) \geq \frac{1}{K^q}\abs{Z}^{1+ \frac{n-r}{n}}.\]
If the second statement fails as well, we can apply Lemma~\ref{lm:EnergySubset} with $M = K\abs{Z}^{\frac{n-r}{n}}$ and $K' = K^{q+1}$. The third statement follows immediately.
\end{proof}

\begin{proof}[Proof of Proposition~\ref{pr:SliceEndelta}]
Let $(e_1,\dotsc,e_n)$ denote the standard basis of $\R^n$. First consider the special case where $V_j^\perp$ is exactly $\Span(e_{(j-1)(n-m) + 1},\dotsc, e_{j(n-m)})$ for each $j = 1,\dotsc,q$. Then we conclude easily from Proposition~\ref{pr:SliceEnergy} by setting $K = \delta^{-2\epsilon}$ and
\[Z = \ens{x \in \Z^n \mid A \cap \delta\cdot(x + [0,1]^n) \neq \varnothing}.\]

For the general case we consider a map $f \in \GL(\R^n)$ which sends isometrically $V$ to $\Span(e_{n-r+1},\dotsc,e_n)$ and $V_j^\perp$ to $\Span(e_{(j-1)(n-m) + 1},\dotsc, e_{j(n-m)})$ for each $j = 1,\dotsc,q$. It is easy to see that $\norm{f^{-1}}\leq n$ and
\[\abs{\det(f^{-1})} = \dang(V_1^\perp,\dotsc,V_q^\perp,V) = \dang(V_1^\perp,\dotsc,V_q^\perp).\]
Therefore $f$ is $\delta^{-O(\epsilon)}$-bi-Lipschitz. 

The conclusion for $A$ follows from the special case applied to $fA$. Indeed, by the inequality \eqref{eq:sizefA} and Lemma~\ref{lm:GLaction}, the hypotheses are satisfied for $fA$ and $f^\perp V_1,\dotsc,f^\perp V_q$ with $\epsilon$ replaced by $O(\epsilon)$. Moreover, the conclusion for $fA$ and $f^\perp V= f^\perp V_1\cap \dotsb \cap f^\perp V_q$ implies that for $A$ and $V$, again by \eqref{eq:sizefA} and Lemma~\ref{lm:GLaction}. 
\end{proof}

\subsection{Projection to intersection of subspaces II: concluding proof}\label{ss:nqn-mrcaseII}
Once we have Proposition~\ref{pr:SliceEndelta}, to prove Proposition~\ref{pr:nqn-mrcase}, we can use Proposition~\ref{pr:Nincr} and ideas in Subsection~\ref{ss:nqmcase} to rule out the case where $A$ has a very large slice and then apply the arguments in Subsection~\ref{ss:nqmrcase} to the dual.

\begin{proof}[Proof of Proposition~\ref{pr:nqn-mrcase}]
Let $n,m,q,r$ be as in Proposition~\ref{pr:nqn-mrcase}. Assume that Theorem~\ref{thm:main} is true for the dimensions $n$ and $m'=r$ and assume that $A$ and $\mu$ are counterexample to Theorem~\ref{thm:main} for the dimensions $n$ and $m$ with parameters $0< \alpha < n$, $\kappa > 0$ and $\epsilon > 0$. We begin by making two remarks. Firstly, we can assume that
\begin{equation}\label{eq:AnotBIG}
\Ncov(A) \leq \delta^{-\alpha - O(\epsilon)},
\end{equation}
for otherwise, we could conclude directly by using Proposition~\ref{pr:Nincr}.

Secondly, we can also assume that $A$ does not contain very large slice of codimension $r$. More precisely, we can assume that
\begin{equation}\label{eq:noBigS}
\forall \,W \in \Gr(\R^n,n-r),\; \forall x \in \R^n, \quad \Ncov\bigl(A \cap (x + W^{(\delta)})\bigr) \leq \delta^{-\frac{n-r}{n}\alpha - O(\epsilon)}.
\end{equation}
Indeed, if \eqref{eq:noBigS} fails, then put $B = \pi_W\bigl(A \cap (x + W^{(\delta)})\bigr)$ and we can apply Proposition~\ref{pr:Nincr} to $B \subset W$ to obtain that $\EC(B) \cap \Gr(W,m)$ does not support any measure with the corresponding non-concentration property in $\Gr(W,m)$. We can conclude as in Subsection~\ref{ss:nqmcase} by using Lemma~\ref{lm:piWVok} and Lemma~\ref{lm:piVpB}.

From now on assume \eqref{eq:AnotBIG} and \eqref{eq:noBigS}. Let $V_1,\dotsc,V_q$ be random elements of $\Gr(\R^n,m)$ independently distributed according to $\mu$. On account of \eqref{eq:perpperp}, the non-concentration property~\eqref{eq:nonconmu} implies similar property for the distribution of $V_1^\perp$, namely,
\[\forall \rho \geq \delta,\; \forall W \in \Gr(\R^n,m),\quad \Prob{\dang(V_1^\perp,W) \leq \rho} \leq \delta^{-\epsilon}\rho^\kappa.\]

From Lemma~\ref{lm:sumsOK} applied to $V_1^\perp,\dotsc, V_q^\perp$, we know that with probability at least $1 - (q-1)\delta^{\kappa - \epsilon}$, the intersection $V = V_1 \cap \dotsb \cap V_q$ has dimension $r$. Let $\mu'$ be the distribution of $V$ conditional to this event. Then by Lemma~\ref{lm:sumsOK} and \eqref{eq:perpperp}, $\mu'$ has the following non-concentration property
\[\forall \rho \geq \delta,\; \forall W \in \Gr(\R^n,n-r),\quad \Prob{\dang(V,W) \leq \rho} \leq \delta^{-O(\epsilon)}\rho^\frac{\kappa}{q}.\]

That is why we can apply the induction hypothesis (Theorem~\ref{thm:main} combined with Proposition~\ref{pr:unionAps}) to the set $A$ and the measure $\mu'$ with $n$ and $m' = r$. We obtain $\epsilon' = \epsilon'(n,\alpha,\kappa) > 0$ such that for all $\epsilon \leq \epsilon'$, the probability that there exists $A' \subset A$ satisfying
\[\Ncov(A') \geq \delta^{\epsilon'}\Ncov(A) \text{ and } \Ncov(\pi_{V}(A')) \leq \delta^{-\frac{r}{n}\alpha - \epsilon'}\]
is at most $\delta^{\epsilon'} + (q-1)\delta^{\kappa - \epsilon}$.

Now we are going to prove a lower bound for this propability, which will lead to a contradiction. As the conclusion of Theorem~\ref{thm:main} fails for $A$, we have $\mu(\EC(A'))\geq \delta^\epsilon$ for all subsets $A'\subset A$. Using a similar construction as in the proof of Proposition~\ref{pr:nqmrcase}, we prove that with probability at least $\delta^{O(\epsilon)}$, we have
\[\dang(V_1^\perp,\dotsc,V_q^\perp) \geq \delta^{O(\epsilon)}\]
and there exists $A_q \subset A$ satisfying $\Ncov(A_q) \geq \delta^{O(\epsilon)}\Ncov(A)$ and for all $j=1,\dotsc,q$,
\begin{equation*}
\pi_{V_j}(A_q) \leq \delta^{-\frac{m}{n}\alpha - \epsilon}.
\end{equation*}
Therefore, all the hypotheses of Proposition~\ref{pr:SliceEndelta} are satisfied for the set $A_q$ with $O(\epsilon)$ in the place of $\epsilon$. In particular, the assumption~\ref{it:AnotBIG} is guaranteed by \eqref{eq:sizeA} and \eqref{eq:AnotBIG} and the assumption~\ref{it:noBigS} is guaranteed by \eqref{eq:noBigS}. Hence there exists a subset $A' \subset A_q$ such that
\[\Ncov(A') \geq \delta^{O(\epsilon)}\Ncov(A) \text{ and } \Ncov(\pi_V(A')) \leq \delta^{-\frac{r}{n}\alpha - O(\epsilon)}.\]
This leads to a contradiction when $\epsilon \leq \frac{\epsilon'}{O(1)}$.
\end{proof}

\section{Projection of fractal sets}
In this section we derive Theorem~\ref{cr:proj} from Theorem~\ref{thm:proj} then Corollary~\ref{cr:projFrost} and Corollary~\ref{cr:projRes} from Theorem~\ref{cr:proj}.

\subsection{Proof of Theorem~\ref{cr:proj}}
To deduce Theorem~\ref{cr:proj} from Theorem~\ref{thm:proj} we need to know how to discretize a fractal set.
The idea is the same as in the proof of \cite[Theorem 4]{Bourgain2010}. We include a detailed proof here for the sake of completeness. But before that, let us recall Frostman's lemma.

\begin{thm}[Frostman's lemma {(see \cite[Theorem 8.8]{Mattila})}]
\label{thm:Frostman}
Let $A$ be a Borel set of $\R^n$. If $\dimH(A) > \alpha$ then there exists a finite nonzero compactly supported Borel measure $\nu$ with $\Supp(\nu) \subset A$ such that
\[\forall \rho > 0,\; \forall x \in \R^n,\quad \nu(\Ball(x,\rho)) \leq \rho^\alpha.\] 
\end{thm}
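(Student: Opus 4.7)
The plan is to carry out the classical dyadic mass-distribution argument, as in Mattila's book. Since any Borel set has Hausdorff dimension equal to the supremum of dimensions of its compact subsets, I first pass to a compact subset $K \subset A$ with $\dimH(K) > \alpha$; after an affine rescaling (whose bounded effect on the Frostman constant can be absorbed into the normalization of $\nu$), assume $K \subset [0,1]^n$. Fix $s$ with $\dimH(K) > s > \alpha$, so that $\HC^s(K) = +\infty$.

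For each integer $N \geq 1$ I build an auxiliary measure $\nu_N$ supported near $K$ as follows. Initially, on every dyadic cube $Q$ of side $2^{-N}$ meeting $K$, place a uniform measure of total mass $2^{-N\alpha}$. Then process levels from $k = N-1$ down to $k=0$: for each dyadic cube $Q$ of side $2^{-k}$ with current mass exceeding $2^{-k\alpha}$, rescale $\nu_N\!\mid_Q$ so that $\nu_N(Q) = 2^{-k\alpha}$. By construction, the output satisfies $\nu_N(Q) \leq 2^{-k\alpha}$ for every dyadic cube $Q$ of every level $0 \leq k \leq N$, which is a dyadic Frostman condition with the correct exponent.

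The crucial step is a uniform lower bound on $\nu_N(K)$. Call a cube \emph{saturated} if the capping procedure was triggered on it; the maximal saturated cubes, together with the untouched level-$N$ cubes meeting $K$, form a family $\mathcal{F}_N$ covering $K$, and $\nu_N(K) = \sum_{Q \in \mathcal{F}_N} 2^{-k(Q)\alpha}$. If this sum were arbitrarily small as $N \to \infty$, then $\mathcal{F}_N$ would give covers of $K$ of vanishing $s$-content, using the bound $(\mathrm{diam}\, Q)^s \leq \delta^{s-\alpha}\cdot 2^{-k(Q)\alpha}$ available for cubes of side at most $\delta = 2^{-k_{\min}}$. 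This contradicts $\HC^s(K) = +\infty$, so $\nu_N(K) \geq c > 0$ independently of $N$.

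Finally, extract a subsequential weak-$*$ limit $\nu$ of the $\nu_N$. It is nonzero, compactly supported, with $\Supp(\nu) \subset K \subset A$ (since $K$ is closed and $\nu_N$ is supported in its $2^{-N}$-neighborhood). The dyadic Frostman bound passes to the limit, and since any ball $\Ball(x,\rho)$ is covered by $O_n(1)$ dyadic cubes of side in $[\rho, 2\rho)$, one gets $\nu(\Ball(x,\rho)) \leq C_n \rho^\alpha$; rescaling $\nu$ by $1/C_n$ yields exactly the stated inequality. The main obstacle is the lower bound on $\nu_N(K)$: this is where the \emph{strict} inequality $\dimH(A) > \alpha$ is essential, since the argument collapses at $s = \alpha$ and truly requires some $s \in (\alpha, \dimH(A))$ to obtain a gain of $\delta^{s-\alpha}$ in the covering estimate.
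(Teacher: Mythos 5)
The paper does not prove this statement itself but quotes it from \cite[Theorem 8.8]{Mattila}, and your dyadic mass-distribution construction (initial mass $2^{-N\alpha}$ on level-$N$ cubes, capping from fine to coarse levels, lower-bounding the surviving mass by comparing the family of maximal saturated cubes with the $s$-Hausdorff content for some $s\in(\alpha,\dimH K)$, then passing to a weak-$*$ limit) is exactly the standard proof given in that reference, and it is correct. The one point to flag is your opening step: the fact that a Borel set has a compact subset of dimension $>\alpha$ is itself a nontrivial theorem (Davies' theorem on compact subsets of positive $\HC^s$-measure in analytic sets), and it is precisely where the passage from compact to Borel sets is hidden, so it should be invoked explicitly as a quoted result rather than treated as immediate; the remaining steps, including the uniform lower bound on the total mass and the transfer of the dyadic bound to balls at the cost of a constant $C_n$, are sound.
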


\begin{proof}[Proof of Theorem~\ref{cr:proj}]
Let $0 < m < n$, $0 < \alpha < n$, $\kappa > 0$ be parameters. Let $\epsilon> 0$ be $\frac{1}{4}$ times the constant given by Theorem~\ref{thm:proj} applied to these parameters. Let $A$ and $\mu$ be a counterexample for Theorem~\ref{cr:proj} with these parameters. Without loss of generality we can assume $A \subset \Ball(0,1)$.

After normalizing $\mu$, we can suppose that it is a probability measure such that
\[ \forall \rho > 0,\; \forall W \in \Gr(\R^n,n-m),\quad \mu(\Vang(W,\rho)) \ll_\mu \rho^\kappa.\]
Thus, the non-concentration condition~\eqref{eq:nonconmu} of Theorem~\ref{thm:proj} is satisfied for sufficiently small $\delta$.

By Frostman's lemma, there is a nonzero Radon measure $\nu$ compactly supported on $A$ such that
\begin{equation}\label{eq:nuFrost}
\forall \rho > 0,\; \forall x \in \R^n, \quad \nu(\Ball(x,\rho)) \leq \rho^{\alpha - \epsilon}.
\end{equation}
For any $V \in \Supp(\mu)$ we have $\dimH(\pi_V(A)) < \eta$ where $\eta = \frac{m}{n} \alpha + 2\epsilon$. By the definition of Hausdorff dimension, for any $k_0 \geq 1$, there is a cover 
\[\pi_V(A) \subset \bigcup_{k \geq k_0} B_{V,k}\]
of $\pi_V(A)$ such that each $B_{V,k}$ is a union of at most $2^{k\eta}$ balls of radius $2^{-k}$ in $V$.

Set $A_{V,k} = \pi_V^{-1}(B_{V,k})$ for $V \in \Supp(\mu)$ and $k \geq k_0$. Since the sets $A_{V,k}$, $k \geq k_0$,  cover $A$, we have
\[\sum_{k \geq k_0} \nu(A_{V,k}) \gg_\nu 1 .\]
Integrating with respect to $\dd \mu (V)$ and using Fubini's theorem, we obtain
\[\sum_{k \geq k_0} \int \nu(A_{V,k}) \dd\mu(V) \gg_{\nu} 1.\]
This in turn implies that there exists $k \geq k_0$ such that $\mu(\EC) \gg_\nu k^{-2}$, where 
\[\EC = \{ V \in \Gr(\R^n,m) \mid \nu(A_{V,k}) \gg_\nu k^{-2}\}.\]
Now fix this $k$ and set $\delta = 2^{-k}$ so that $\Ncov(\pi_V(A_{V,k})) \leq \delta^{-\eta}$. Note that as we can choose $k_0$ arbitrarily large, we can make $\delta$ arbitrarily small.

Here we cannot apply Theorem~\ref{thm:proj} directly to the set $A$ because it might not be regular enough. The idea is to partition $A$ into regular parts. Let $\QC$ denotes the set of dyadic cubes in $\R^n$ of side length $\delta$:
\[ \QC = \bigl\{x + {[0,\delta[}^n \;\mid x \in \delta \cdot \Z^n\bigr\}.\]
Put $L = \ceil{\frac{n}{\epsilon}} + 1$. For $l = 0, \dotsc , L$, let $A_l$ be the union of all cubes $Q \in \QC$ such that
\[\delta^{(l+1)\epsilon}\nu(A) < \nu(Q) \leq \delta^{l\epsilon}\nu(A).\]

It is easy to see that $A_l$ are disjoint and $\sum_{l=0}^L \nu(A_l) \geq (1 - \delta^\epsilon) \nu(A)$. Moreover for any $l = 0,\dotsc,L$ and any $A'\subset A_l$ which is also a union of cubes in $\QC$, we have
\[\delta^{(l+1)\epsilon}\Ncov(A')\nu(A) \ll_n \nu(A') \ll_n \delta^{l\epsilon}\Ncov(A')\nu(A)\]
Hence, if $\nu(A_l) > 0$, then for such $A'$,
\begin{equation}\label{eq:nuNcov}
 \delta^{\epsilon}\frac{\nu(A')}{\nu(A_l)} \ll_n \frac{\Ncov(A')}{\Ncov(A_l)} \ll_n \delta^{-\epsilon}\frac{\nu(A')}{\nu(A_l)}
\end{equation}

Consider $\LC = \ens{0 \leq l \leq L \mid \nu(A_l) \geq \delta^\epsilon}$, the set of levels with sufficient mass. For any $l \in \LC$, by \eqref{eq:nuFrost},
\[ \Ncov(A_l) \gg \delta^{-\alpha+\epsilon} \nu(A_l) \geq \delta^{-\alpha+2\epsilon}\]
and from \eqref{eq:nuNcov} and \eqref{eq:nuFrost}, for any $\rho \geq \delta$ and any $x \in \R^n$,
\[\frac{\Ncov(A_l \cap \Ball(x,\rho))}{\Ncov(A_l)} \ll_n \delta^{-\epsilon} \frac{\nu(\Ball(x,\rho + n \delta))}{\nu(A_l)} \ll_n \delta^{-3\epsilon} \rho^\alpha.\]
In other words, the assumptions of Theorem~\ref{thm:proj} are satisfied for $A_l$.

Now for $l \in \LC$ and $V \in \EC$, let $A_{V,k,l}$ be the union of $Q \in \QC$ such that $Q \subset A_l$ and $Q \cap A_{V,k} \neq \varnothing$. From the definition of $\LC$ and $\EC$, we know that for any $V \in \EC$
\[\sum_{l\in \LC} \nu(A_{V,k,l}) \geq \nu(A_{V,k}) - (L+1)\delta^\epsilon \gg_\nu k^{-2}.\]
Hence there exists $l \in \LC$ such that $\frac{\nu(A_{V,k,l})}{\nu(A_l)} \gg_\nu k^{-2}$. Therefore by setting 
\[\EC_l = \Bigl\{ V \in \Gr(\R^n,m) \mid \frac{\nu(A_{V,k,l})}{\nu(A_l)} \gg_{\nu} k^{-2}\Bigr\},\]
we have $\EC = \cup_{l\in \LC} \EC_l$.

From the lower bound $\mu(\EC) \gg_\nu k^{-2}$, we find a certain $l \in \LC$ such that $\mu(\EC_l) \gg_{\nu,L} k^{-2}$. This contradicts Theorem~\ref{thm:proj} applied to the set $A_l$ and the measure $\mu$. Indeed, recalling the notation \eqref{eq:ECAeps}, Theorem~\ref{thm:proj} says $\mu(\EC(A_l,4\epsilon)) \leq \delta^{4\epsilon}$. But we have $\EC_l \subset \EC(A_l,4\epsilon)$. Because for all $V \in \EC_l$, we have $\Ncov(A_{V,k,l}) \geq \delta^{2\epsilon} \Ncov(A_l)$ by \eqref{eq:nuNcov} and also 
\[\Ncov(\pi_V(A_{V,k,l})) \ll \Ncov(\pi_V(A_{V,k})) \leq \delta^{-\eta}. \qedhere\]
\end{proof}

\subsection{Hausdorff dimension of exceptional set}
In this subsection we deduce Corollary~\ref{cr:projFrost} from Theorem~\ref{cr:proj}. First recall the \L{}ojasiewicz inequality which we will need.  
\begin{thm}[\L{}ojasiewicz inequality {\cite[Théorème 2, page 62]{Lojasiewicz}}]
Let $(M,d)$ be a real analytic manifold endowed with a Riemannian distance $d$ and let $f \colon M \to \R$ be a real analytic map. If $K$ is a compact subset of $M$, then there is $C > 0$ depending on $K$ and $f$ such that for all $x \in K$,
\[\abs{f(x)} \geq \Inv{C}\min(1,d(x,Z))^C\]
where $Z = \ens{x \in M \mid f(x) = 0}$.
\end{thm}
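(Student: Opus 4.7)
The plan is to reduce to a local statement using compactness of $K$. Away from $Z$, the inequality is trivial: if $p \in K$ with $f(p) \neq 0$, then continuity gives $|f| \geq |f(p)|/2$ on some neighborhood of $p$, and since $\min(1, d(x, Z)) \leq 1$ there, any sufficiently large $C$ works on that neighborhood. So, after covering $K$ by finitely many such neighborhoods together with neighborhoods of points of $K \cap Z$, the real content is the local statement at a point $p \in K \cap Z$, where in a real-analytic chart we may assume $M$ is an open subset of $\R^n$ and $p = 0$.

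My preferred route for the local statement is via the Weierstrass preparation theorem. After a generic linear change of coordinates, $f$ factors on a neighborhood of $0$ as $f = u \cdot P$, where $u$ is a nowhere-vanishing analytic function and $P(x', x_n)$ is monic of degree $d$ in $x_n$ with analytic coefficients in $x' = (x_1, \ldots, x_{n-1})$ vanishing at $x' = 0$. Since $u$ is bounded above and below, it suffices to prove the inequality for $P$. Writing $P = \prod_{j=1}^d (x_n - \alpha_j(x'))$ over Puiseux branches of the roots, the real zero set $Z_P$ is the locus where some $\alpha_j(x')$ is real and equals $x_n$, and $d(x, Z_P)$ is comparable to the minimum of $|x_n - \alpha_j(x')|$ over the relevant real branches. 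Bounding $|P(x)|$ from below in terms of a power of $d(x, Z_P)$ then reduces to uniformly controlling the Puiseux exponents appearing in these branches as $x'$ approaches the discriminant locus of $P$.

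The main obstacle is precisely this Puiseux analysis near the discriminant, where branches collide and the exponents must be extracted from delicate combinatorial data (Newton polygons, Puiseux series for each branch). A conceptually cleaner but heavier alternative I would seriously consider is Hironaka's theorem on resolution of singularities: one produces a proper real-analytic map $\pi \colon \tilde M \to M$ such that $f \circ \pi$ is locally a monomial in the resolved coordinates. The inequality is then essentially trivial upstairs (a monomial is obviously bounded below by a power of the distance to its zero divisor), and it descends to $M$ by properness of $\pi$ combined with the fact that distances upstairs and downstairs are controlled by Hölder-type estimates coming again from a pointwise Łojasiewicz-type bound on $\pi$ itself. I would ultimately take the resolution-of-singularities route for its conceptual clarity, at the cost of invoking a very substantial prerequisite; the Weierstrass-plus-Puiseux argument is more elementary but technically finicky.
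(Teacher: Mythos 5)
First, a remark on the comparison you were asked for: the paper does not prove this statement at all --- it is quoted from \L{}ojasiewicz's monograph and used as a black box --- so your proposal has to stand on its own as a proof of the classical \L{}ojasiewicz inequality.

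Your compactness reduction and the treatment of points away from $Z$ are fine, but the Weierstrass--Puiseux route you prefer as the ``elementary'' option has a genuine gap in two places. (a) Puiseux expansion of the roots of a Weierstrass polynomial is a one-variable (plane curve) phenomenon: when the base $x'=(x_1,\dotsc,x_{n-1})$ has dimension $\geq 2$, the roots are multivalued algebraic functions branched along the discriminant locus, which is itself a singular hypersurface, and there is no uniform fractional-power parametrization of the branches near it. So ``uniformly controlling the Puiseux exponents as $x'$ approaches the discriminant'' is not a finite combinatorial verification; it is essentially the whole theorem, and \L{}ojasiewicz's actual argument replaces it by a delicate induction on dimension. (b) Even where Puiseux applies, the asserted comparability of $d(x,Z_P)$ with $\min_j \abs{x_n-\alpha_j(x')}$ over the real branches is false: for $P=x_n^2+x_1^2+\dotsb+x_{n-1}^2$ there are no real branches over $x'\neq 0$, yet $d(x,Z_P)=\norm{x}$ is finite; in general the nearest real zero lies over a \emph{different} base point $x'$, and relating $\abs{P}$ to the distance to the real (as opposed to complex) zero set is exactly the hard content, not a by-product of the factorization.

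The resolution-of-singularities route you fall back on is a standard and correct strategy (essentially the Bierstone--Milman proof), but as written it is an outline that delegates the content to Hironaka and leaves the descent unexamined; moreover the descent is easier than you suggest, and no H\"older or \L{}ojasiewicz-type bound on $\pi$ is needed. Once $f\circ\pi$ is, in each of finitely many charts covering the compact set $\pi^{-1}(K)$, a unit times a monomial, one gets $\abs{f(\pi(y))}\geq c\, d\bigl(y,\pi^{-1}(Z)\bigr)^{N}$ there (a monomial dominates the appropriate power of $\min_i\abs{y_i}$); then the Lipschitz bound for the analytic map $\pi$ on a compact neighbourhood of $\pi^{-1}(K)$ gives $d(x,Z)\leq L\, d\bigl(y,\pi^{-1}(Z)\bigr)$ whenever $\pi(y)=x$, which is precisely the direction required, and properness plus surjectivity of $\pi$ conclude. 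So your second route is correct modulo Hironaka, provided you actually carry out the chart-by-chart monomial estimate and this one-line Lipschitz descent instead of appealing to estimates on $\pi$ that are neither available nor needed.
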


\begin{proof}[Proof of Corollary~\ref{cr:projFrost}]
Recall that we work with a Riemannian metric on the Grassmannian $\Gr(\R^n,m)$ which is invariant under the action of the group $\grO(n)$.
Observe that the exceptional set of directions
\[\{ V\in \Gr(\R^n,m) \mid \dimH(\pi_V(A)) \leq \frac{m}{n}\alpha + \epsilon\}\]
is measurable for the Borel $\sigma$-algebra on $\Gr(\R^n,m)$. Suppose that the Hausdorff dimension of the exceptional set is larger than  $m(n-m) - 1 + \kappa$ for some $\kappa$. Frostman's lemma is valid for general compact metric spaces (see \cite[Theorem 8.17]{Mattila})\footnote{For our situation, we can simply use a local chart and Frostman's lemma in $\R^n$ since a local chart of a Riemannian manifold is necessarily bi-Lipschitz to its image (endowed with the induced Euclidean distance).}. Thus there exists a nonzero Radon measure $\mu$ supported on this exceptional set such that for all $\rho > 0$ and all $V \in \Gr(\R^n,m)$, $\mu(\Ball(V,\rho)) \leq \rho^{m(n-m) - 1 + \kappa}$. We are going to prove that $\mu$ satisfies the non-concentration property forbidden by Theorem~\ref{cr:proj}.

We fix $W \in \Gr(\R^n,m)$ and apply the \L{}ojasiewicz inequality to the real analytic function $\dang(\bullet,W)^2 \colon \Gr(\R^n,m) \to \R$. We conclude that there is a constant $C > 0$ such that for any $0 < \rho \leq 1$, $\Vang(W,\rho)$ is contained in the $\rho'$\dash{}neighborhood of the Schubert cycle $\Vang(W,0)$ with $\rho' = (C\rho)^{\Inv{C}}$. By the $\grO(n)$\dash{}invariance, the constant $C$ is in fact uniform for all $W \in \Gr(\R^n,n-m)$. Since the Schubert cycle $\Vang(W,0)$ is a smooth submanifold, we have $\Ncov[\rho'](\Vang(W,0)^{(\rho')}) \ll_n \rho'^{-m(n-m) + 1}$. Here again, the estimate is uniform in $W$ thanks to the $\grO(n)$\dash{}invariance. Therefore,
\[\mu(\Vang(W,\rho)) \leq \Ncov[\rho'](\Vang(W,0)^{(\rho')}) \sup_{V \in \Gr(\R^n,m)} \mu(\Ball(V,\rho')) \ll_n \rho^{\frac{\kappa}{C}}.\]
This contradicts Theorem~\ref{cr:proj} if $\epsilon$ is sufficiently small and finishes the proof of Corollary~\ref{cr:projFrost}.
\end{proof}

\subsection{Restricted family of projections}
Finally, we deduce Corollary~\ref{cr:projRes} from Theorem~\ref{cr:proj}. We will use the following Remez-type inequality due to A. Brudnyi~\cite{Brudnyi2010}.

\begin{thm}[Brudnyi {\cite[Theorem 1.2, case (b)]{Brudnyi2010}}]\label{thm:Brudnyi}
Let $\Omega_1 \subset \R^q$ and $\Omega_2 \subset \R^p$ be connected open sets in Euclidean spaces. Let $f \colon \Omega_1 \times \Omega_2 \to \R$ be a real analytic map, considered as a family of real analytic functions on $\Omega_2$ depending analytically on a parameter varying in $\Omega_1$. Let $K_1 \subset \Omega_1$ and $K_2 \subset \Omega_2$ be compact subsets. For $x \in \Omega_1$, write
\[M(x) = \max_{y \in K_2} \abs{f(x,y)}.\]
There exists $d \geq 1$ and $C > 0$ such that for any $0 < \rho < 1$, 
\begin{equation}\label{eq:Remez}
 \max_{x \in K_1}\,\Ncov[\rho](\{y \in K_2 \mid \abs{f(x,y)} < M(x)\rho^d \}) \leq C \rho^{1-p}.
\end{equation}
\end{thm}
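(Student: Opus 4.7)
The plan is to reduce Brudnyi's Remez-type inequality to the classical one-variable Remez inequality for polynomials, via the Weierstrass preparation theorem together with a compactness argument over $K_1 \times K_2$. Recall the classical Remez: if $P$ is a monic polynomial of degree $d$ in one real variable, then for any $\epsilon > 0$, the set $\{t : \abs{P(t)} < \epsilon\}$ has Lebesgue measure at most $d\,\epsilon^{1/d}$. Choosing $\epsilon$ of order $\rho^d$ gives Lebesgue measure $O(\rho)$, hence $\rho$-covering number $O(1)$ on each such fiber. The exponent $1-p$ in the conclusion will then emerge from slicing $K_2$ by lines: the $(p-1)$-dimensional family of lines parallel to a fixed direction has $\rho$-covering number $O(\rho^{1-p})$, and on each line the 1-D Remez bound will give $\rho$-covering $O(1)$ for the sublevel set.

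The proof proceeds in three steps. \textbf{Step 1:} Extend $f$ to a holomorphic function $F$ on a complex neighborhood $\tilde\Omega \subset \C^q \times \C^p$ of $\Omega_1 \times \Omega_2$, and work on a fixed bounded polydisk neighborhood $U$ of the compact $K_1 \times K_2$ on which $F$ is bounded. \textbf{Step 2:} For each point $(x_0, y_0) \in K_1 \times K_2$, after a suitable linear change in the $y$-coordinates (so that $F(x_0, y_0 + t e_p) \not\equiv 0$ as a function of $t$), apply the Weierstrass preparation theorem to obtain, on a polydisk around $(x_0, y_0)$, a factorization
\[F(x, y) = u(x, y)\, Q(x, y', y_p),\]
where $u$ is holomorphic and nonvanishing, $y' = (y_1, \ldots, y_{p-1})$, and $Q$ is a Weierstrass polynomial in $y_p$ of finite degree $d(x_0, y_0)$ with coefficients holomorphic in $(x, y')$. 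By compactness, finitely many such polydisks cover $K_1 \times K_2$; the degree is then bounded by some uniform $d$, and the unit $u$ admits uniform lower and upper bounds $|u|_{\min}, |u|_{\max}$ on each chart.

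\textbf{Step 3:} Apply the classical 1-D Remez on each fiber. Within one polydisk of the cover, $\abs{f(x,y)} \geq |u|_{\min}\,\abs{Q(x, y', y_p)}$, so the sublevel set $\{y : \abs{f(x,y)} < M(x)\rho^d\}$ is contained in $\{(y', y_p) : \abs{Q(x, y', y_p)} < (M(x)/|u|_{\min})\rho^d\}$. For each fixed $(x, y')$, the map $y_p \mapsto Q(x, y', y_p)$ is a monic polynomial in $y_p$ of degree $\leq d$, and classical Remez bounds the Lebesgue measure of its sublevel set by $d\,(M(x)/|u|_{\min})^{1/d}\rho$. Since $M(x)$ is uniformly bounded on $K_1$ by continuity of $f$ and compactness, this measure is $O(\rho)$, hence the $y_p$-section has $\rho$-covering number $O(1)$. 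A $\rho$-cover of the $(p-1)$-dimensional $y'$-slice of $K_2$ has cardinality $O(\rho^{1-p})$; combining with the fiberwise estimate and summing over the finitely many charts yields the desired $C\rho^{1-p}$ bound.

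The main obstacle is securing the uniformity of all constants in $x \in K_1$ and in the base point $(x_0, y_0)$. The degree $d(x_0, y_0)$ in Weierstrass preparation is the order of vanishing of $F(x_0, \cdot)$ at $y_0$ in the chosen direction; it is upper semi-continuous in $(x_0, y_0)$, hence bounded on the compact set $K_1 \times K_2$, which is what makes the uniform $d$ possible. Making the linear change of $y$-coordinates vary with the base point is handled by the finite cover: on each chart a single linear change works. A secondary subtlety is that the statement normalizes by the \emph{global} maximum $M(x)$ while the fiberwise Remez uses only local data on each polydisk; the argument handles this because the sublevel set $\{y : \abs{f(x,y)} < M(x)\rho^d\}$ only \emph{shrinks} when we pass to the local inclusion above, and the uniform lower bound on $|u|_{\min}$ prevents any blowup of constants when summing over charts.
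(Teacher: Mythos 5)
First, a point of reference: the paper does not prove this statement at all — it is imported verbatim as \cite[Theorem 1.2, case (b)]{Brudnyi2010} and used as a black box — so there is no internal proof to compare against; your attempt has to stand on its own. It does not, for a reason that sits exactly where the theorem's real content lies. Your Step 2 requires, at each base point $(x_0,y_0)\in K_1\times K_2$, a direction in the $y$-variables along which $F(x_0,\cdot)$ does not vanish identically near $y_0$. If $f(x_0,\cdot)\equiv 0$ (equivalently $M(x_0)=0$; think of $f(x,y)=x_1g_1(y)+x_2g_2(y)$ near $x_0=0$), then \emph{no} linear change of the $y$-coordinates produces such a direction, the order of vanishing is $+\infty$ there, and your appeal to "upper semicontinuous, hence bounded on the compact set" fails — an upper semicontinuous function taking the value $+\infty$ is not bounded. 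So no finite cover of $K_1\times K_2$ by Weierstrass charts with uniformly bounded degree exists. The conclusion for such $x_0$ is vacuous, but the theorem must be uniform for $x$ \emph{near} $x_0$ with $M(x)>0$ small, and that is precisely what the normalization by $M(x)$ is for: one must control the family $f(x,\cdot)/M(x)$, whose limits along different approach directions to the degenerate set are different analytic functions. Bounding the "degree" of this normalized family uniformly (Brudnyi's Chebyshev degree/valency, via compactness in the space of normalized analytic functions plus a Hurwitz/Rouch\'e transfer) is the heart of the cited theorem, and your argument does not touch it. What you have sketched does cover the case $\inf_{K_1}M>0$ — which, as it happens, is the only case the paper uses in Corollary~\ref{cr:projRes}, since there transversality plus compactness of the Grassmannian force $M$ to be bounded below — but it does not prove the theorem as stated.

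There is a second, independent gap in Step 3. The one-variable Remez inequality bounds the Lebesgue measure of each \emph{exact} fiber $E_{y'}=\{y_p:\abs{Q(x,y',y_p)}<\epsilon\}$ by $O(\rho)$, but to $\rho$-cover the full set you must cover, for each point $y'_j$ of your $(p-1)$-dimensional net, the \emph{union} $\bigcup_{\abs{y'-y'_j}\leq\rho}E_{y'}$. The roots of a monic degree-$k$ polynomial are only H\"older-$\frac1k$ in its coefficients, so over a $\rho$-ball in $y'$ the roots of $Q(x,y',\cdot)$ can sweep out intervals of length $\asymp\rho^{1/k}\gg\rho$ (already for $Q=y_p^2-y_1'$ near the vertex of the parabola), and the bound "(number of slabs) $\times$ (worst fiber)" yields only $O(\rho^{1/k-p})$, not $O(\rho^{1-p})$. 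The correct exponent is recovered only by summing the per-slab contributions, and showing that this sum is $O(\rho^{1-p})$ — rather than $\Ncov[\rho](K_2')\cdot O(1)$ — is where the actual combinatorial work of the multivariate covering-number Remez inequality lives (note that the measure-theoretic Remez–Brudnyi–Ganzburg inequality does not imply the covering-number version, since a sublevel set can concentrate near a hypersurface of measure zero). This step needs either an induction on $p$ with control of the exceptional $y'$ where roots collide, or a genuinely different mechanism; as written it is an unjustified leap.
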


The constant $d$ in \eqref{eq:Remez} depends not only on $f$ but also on $K_1$ and $K_2$. It can be estimated if more is known about the function $f$. For example, if for any $x \in \Omega_1$, $f(x,\bullet) \colon \Omega_2 \to \R$ is polynomial of degree less than $k \in \N$, then $d$ can be taken to be $k$. We refer the reader to the introduction in \cite{Brudnyi2010} for more details.

The function $M$ is lower semicontinuous. Hence by compactness $M$ has a minimum on $K_1$. As a consequence, if we assume that for any $x \in K_1$, we have $M(x) > 0$, i.e. $f(x,\bullet)$ is not identically zero, then the conclusion~\eqref{eq:Remez} can be reformulated as (the constant $C$ becomes larger in this formulation)
\begin{equation*}
\max_{x \in K_1}\,\Ncov[\rho](\{y \in K_2 \mid \abs{f(x,y)} \leq \rho^d \}) \leq C \rho^{1-p}.
\end{equation*}
Moreover, by covering with charts, it is easy to see that the same holds if $\Omega_1$ and $\Omega_2$ are connected real analytic manifolds.

\begin{proof}[Proof of Corollary~\ref{cr:projRes}]
Let $\epsilon > 0$ as be given by Theorem~\ref{cr:proj}. Consider the real analytic map $f \colon \Gr(n,n-m) \times \Omega \to \R$ defined by
\[\forall (W,t) \in \Gr(n,n-m) \times \Omega, \quad f(W,t) = \dang(V(t),W)^2.\]
By the transversality assumption, for any $W$, the partial function $f(W,\bullet)$ is not identically zero. Hence by Brudnyi's theorem, there exists $d$ and $C > 0$ such that for all $ 0 < \rho < 1$,
\begin{equation}\label{eq:byRemez}
\max_{W \in \Gr(n,n-m)} \Ncov[\rho](\{t \in \Omega' \mid \abs{f(W,t)} \leq \rho^d \}) \leq C \rho^{1-p}.
\end{equation}
Now assume for a contradiction that the set of exceptional parameters has Hausdorff dimension larger than $p - 1 + d\kappa$. Then by Frostman's lemma, there exists a nonzero Borel measure $\mu$ supported on this exceptional set satisfying
\[\sup_{t \in \R^p} \mu(\Ball(t,\rho)) \leq \rho^{p - 1 + d\kappa}.\]
Then by \eqref{eq:byRemez}, for any $W \in \Gr(\R^n,n-m)$,
\[ \mu(\{t \in \Omega' \mid \dang(V(t),W) \leq \rho^d \}) \leq \rho^{p - 1 + d\kappa} C \rho^{1-p} \leq C \rho^{d\kappa}\]
In other words the image measure $\frac{1}{C}V_*\mu$ has the non-concentration property forbidden by Theorem~\ref{cr:proj}. This concludes the proof of \eqref{eq:projRes}.

The moreover part follows from the fact we know the exact value of $d$ in Theorem~\ref{thm:Brudnyi} when the map $f$ is polynomial.
\end{proof}

\bibliographystyle{abbrv} 
\bibliography{proj}

\end{document}